\documentclass[12pt]{article}
\usepackage{framed}
\usepackage[utf8]{inputenc}
\usepackage{times}
\usepackage[margin=1in]{geometry}
\usepackage{amsfonts} 
\usepackage{graphicx}
\usepackage{authblk}
\usepackage{enumerate}
\usepackage[inline]{enumitem}
\usepackage{hyperref}
\usepackage[all]{xy}
\usepackage{appendix}
\usepackage{amssymb}
\usepackage{fourier}
\usepackage{stmaryrd}
\usepackage{url}
\usepackage[normalem]{ulem} 
\usepackage{stmaryrd}
\usepackage{tikz-cd}
\usepackage{tablefootnote}
\usetikzlibrary{calc,trees,chains,matrix,arrows,decorations.pathreplacing,shapes.misc,shapes.multipart}
\tikzset{commutative diagrams/diagrams={ampersand replacement=\&}}

\tikzset{dbl/.style={double,
		double equal sign distance,
		-implies,
		shorten >=10pt,
		shorten <=10pt}}

\tikzset{
	invisible/.style={opacity=0},
	visible on/.style={alt={#1{}{invisible}}},
	alt/.code args={<#1>#2#3}{%
		\alt<#1>{\pgfkeysalso{#2}}{\pgfkeysalso{#3}}%
	}
}

\let\OLDthebibliography\thebibliography
\renewcommand\thebibliography[1]{
  \OLDthebibliography{#1}
  \setlength{\parskip}{0pt}
  \setlength{\itemsep}{3pt plus 0.3ex}
}

\usepackage{relsize}
\usepackage{amsmath,amscd}
\usepackage{mathtools}
\usepackage{amsthm}

\usepackage{footmisc}
\usepackage{thmtools} 
\usepackage{thm-restate}
\usepackage{hyperref}

\usepackage{cleveref}

\declaretheorem[name=Theorem,numberwithin=section]{theorem}

\definecolor{darkblue}{rgb}{0.0, 0.0, 0.8}
\definecolor{darkred}{rgb}{0.8, 0.0, 0.0}
\definecolor{darkgreen}{rgb}{0.0, 0.8, 0.0}
\definecolor{grey}{rgb}{0.5, 0.5, 0.5}
\definecolor{aliceblue}{rgb}{0.94, 0.97, 1.0}

\makeatletter
\providecommand*{\twoheadrightarrowfill@}{%
  \arrowfill@\relbar\relbar\twoheadrightarrow
}
\providecommand*{\twoheadleftarrowfill@}{%
  \arrowfill@\twoheadleftarrow\relbar\relbar
}
\providecommand*{\xtwoheadrightarrow}[2][]{%
  \ext@arrow 0579\twoheadrightarrowfill@{#1}{#2}%
}
\providecommand*{\xtwoheadleftarrow}[2][]{%
  \ext@arrow 5097\twoheadleftarrowfill@{#1}{#2}%
}
\makeatother

\theoremstyle{definition}
\newtheorem{lemma}[theorem]{Lemma}
\newtheorem{corollary}[theorem]{Corollary}
\newtheorem{proposition}[theorem]{Proposition}

\newtheorem{example}[theorem]{Example}
\newtheorem{remark}[theorem]{Remark}
\newtheorem{definition}[theorem]{Definition}
\newtheorem{convention}[theorem]{Convention}
\newtheorem{claim}[theorem]{Claim}

\newcommand{\db}{d_{\mathrm{B}}}
\newcommand{\dfhat}{\widehat{\df}}

\newcommand{\length}{\mathrm{length}}

\newcommand{\crit}{\mathbf{crit}}
\newcommand{\VR}{\mathrm{VR}}
\newcommand{\R}{\mathbb{R}}
\newcommand{\Rplus}{\mathbb{R}_{\geq0}}
\newcommand{\Rop}{\R^{\mathrm{op}}}
\newcommand{\Z}{\mathbb{Z}}
\newcommand{\Zplus}{\Z_{\geq0}}
\newcommand{\Zop}{\Z_{\geq0}^{\mathrm{op}}}
\newcommand{\N}{\mathbb{N}}
\newcommand{\X}{\mathbb{X}}

\newcommand{\norm}[1]{\left\lVert#1\right\rVert}

\newcommand{\e}{\varepsilon}

\newcommand{\PP}{\mathcal{P}}
\newcommand{\PPP}{(\PP,\leq)}
\newcommand{\PPPP}{(\PP,\leq,\Omega)}
\newcommand{\QQ}{\mathcal{Q}}

\newcommand{\SSS}{\mathcal{Q}}

\newcommand{\Ifunc}{\mathbf{I}}

\newcommand{\Ffunc}{\mathbf{F}}

\newcommand{\Gfunc}{\mathbf{G}}
\newcommand{\Sfunc}{\mathbf{S}}
\newcommand{\Hfunc}{\mathbf{H}}

\newcommand{\Part}{\mathbf{Part}}
\newcommand{\Simp}{\mathbf{Simp}}

\newcommand{\simp}{\Delta}

\newcommand{\eps}{\varepsilon}

\newcommand{\subpart}{\mathbf{SubPart}}
\newcommand{\Formi}{\mathbf{Formi}}

\newcommand{\Int}{\mathbf{Int}}

\newcommand{\vect}{\mathbf{vect}}

\newcommand{\tripod}{R:\ X \xtwoheadleftarrow{\varphi_X} Z \xtwoheadrightarrow{\varphi_Y} Y}
\newcommand{\tripodd}{R_1:\ X \xtwoheadleftarrow{\varphi_X} Z_1 \xtwoheadrightarrow{\varphi_Y} Y}
\newcommand{\tripoddd}{R_2:\ Y \xtwoheadleftarrow{\psi_Y} Z_2 \xtwoheadrightarrow{\psi_W} W}
\newcommand{\dintf}{d_{\mathrm{GH}}}
\newcommand{\df}{d_\mathrm{F}}
\newcommand{\dgh}{d_{\mathrm{GH}}}
\newcommand{\dhaus}{d_{\mathrm{H}}}
\newcommand{\dint}{d_{\mathrm{I}}}

\newcommand{\upcode}{cosheaf-code{}}

\newcommand{\abs}[1]{\left\lvert{#1}\right\rvert}
\newcommand{\pow}{\mathbf{pow}_{\geq 1}}
\newcommand{\dis}{\mathrm{dis}}
\newcommand{\ijr}{\mathrm{ijr}}
\newcommand{\poset}{(\PP,\leq)}
\newcommand{\posetflow}{(\PP,\leq,\Omega)}
\newcommand{\semilattice}{(\SSS,\leq)}

\newcommand{\rk}{\mathrm{rk}}

\newcommand{\dero}{d_{\mathrm{E}}}
\newcommand{\Hrm}{\mathrm{H}}
\newcommand{\C}{\mathcal{C}}

\newcommand{\dt}{d_{\mathrm{GH}}}

\newcommand{\domhat}{d_{\widehat{\Omega}}}
\newcommand{\omegahat}{\widehat{\Omega}}
\newcommand{\pe}{poset map}
\newcommand{\F}{\mathbb{F}}
\newcommand{\Acal}{\mathcal{A}}
\newcommand{\Bcal}{\mathcal{B}}

\title{Interleaving by Parts: Join Decompositions of Interleavings and Join-Assemblage of Geodesics}

\author[1]{Woojin~Kim}
\author[2]{Facundo~Mémoli}
\author[3]{Anastasios~Stefanou}

\affil[1]{Department of Mathematics, 
		Duke University.
		\thanks{\texttt{woojin@math.duke.edu}}}

\affil[2]{Department of Mathematics and Department of Computer Science and Engineering, 
		The Ohio State University.\thanks{\texttt{memoli@math.osu.edu}}}
		
\affil[3]{Department of Mathematics and Computer Science, 
		University of Bremen.
		\thanks{\texttt{stefanou@uni-bremen.de}}}

\begin{document}

\maketitle

\begin{abstract} 
Metrics of interest in topological data analysis (TDA) are often explicitly or implicitly in the form of an interleaving distance $\dint$ between poset maps (i.e.~order-preserving maps), e.g. the Gromov-Hausdorff distance between metric spaces can be reformulated in this way.
   
   We propose a representation of a poset map $\Ffunc:\PP\to\QQ$ as a join (i.e.~supremum) $\bigvee_{b\in B} \Ffunc_b$ of simpler poset maps $\Ffunc_b$ (for a join dense subset $B\subset \QQ$) which in turn yields a  decomposition of $\dint$ into a product metric.      The decomposition of $\dint$ is simple, but its ramifications are manifold: (1) We can construct a geodesic path between any poset maps $\Ffunc$ and $\Gfunc$ with $\dint(\Ffunc,\Gfunc)<\infty$ by assembling geodesics between all $\Ffunc_b$s and $\Gfunc_b$s via the join operation. This construction generalizes at least three constructions of geodesic paths that have appeared in the literature. 
      (2) We can extend the Gromov-Hausdorff distance to a distance between simplicial filtrations over an arbitrary poset with a flow, preserving its universality and geodesicity. 
  (3) We can clarify equivalence between several known metrics on multiparameter hierarchical clusterings.  (4) We can illuminate the relationship between the \emph{erosion distance} by Patel and the \emph{graded rank function} by Betthauser, Bubenik, and Edwards, which in turn takes us to an interpretation on the representation $\bigvee_b \Ffunc_b$ as a generalization of persistence landscapes and graded rank functions.

\end{abstract}

\paragraph{Acknowledgements.}
WK thanks Parker Edwards, Alex McCleary and  Justin Curry for beneficial discussions.  We also thank Zane Smith for helping with  Thm.~\ref{thm:dH=dB}. WK and FM were supported by NSF through grants  DMS-1723003, CCF-1740761, RI-1901360, and CCF-1526513. AS was supported by NSF  through grants CCF-1740761,
DMS-1440386 
and  RI-1901360.

\section{Introduction}

\paragraph{Persistent homology and interleaving distances.}
Persistent homology plays a central role in topological data analysis (TDA) \cite{carlsson2009topology,edelsbrunner2008persistent,ghrist2008barcodes}.  The most basic construction in persistent homology consists of applying the homology functor to an $\R$-indexed nested family of topological spaces or simplicial complexes such as the Vietoris-Rips filtration on a metric space. 
By utilizing homology with coefficients in a field $\F$, we obtain so-called \emph{persistence modules}, which are $\R$-indexed functors valued in the category  $\vect$ of vector spaces and linear maps over $\F$. Generalizing this notion, a poset-indexed functor valued in a certain category $\C$ is called a \emph{generalized persistence module} with values in $\C$ \cite{bubenik2014categorification}. 

One of the most prevalent metrics for quantifying the dissimilarity between two persistence modules is the \emph{interleaving distance} $\dint$. Since $\dint$ was first introduced in order to compare $\R$-indexed persistence modules \cite{chazal2009proximity}, 
it has been generalized to various different settings 
\cite{botnan2018algebraic,botnan2020relative,bubenik2015metrics,curry2013sheaves,de2016categorified,deSilva2018,lesnick2015theory,scoccola2020locally}.
One of the main uses of 
$\dint$ is for comparing $\R^n$-indexed persistence modules, where its computation is known to be NP-hard for $n\geq 2$ \cite{bjerkevik2019computing}. 

While poly-time computable lower bounds for $\dint$ have been studied for $n=2$  \cite{bjerkevik2021asymptotic,cerri2013betti,kerber2018exact,landi2018rank}, its extension to the case $n\geq 3$ is not much known. The \emph{erosion distance} introduced by Patel \cite{patel2018generalized} is an attractive alternative in this respect and will be subsequently further discussed.

\paragraph{Interleavings between poset maps.} 
Partially ordered sets are simply called \emph{posets}. An order-preserving map $\PP\rightarrow \QQ$ between posets $\PP$ and $\QQ$ is called a \emph{poset map}. By viewing the target poset $\QQ$ as a category (each point  $p\in \QQ$ is an object and a unique arrow $p\rightarrow q$ exists whenever $p\leq q$ in $\QQ$), a poset map can be viewed as a generalized persistence module. Poset maps are omnipresent in TDA, e.g. simplicial filtrations (indexed by arbitrary posets), hierarchical clusterings  (indexed by arbitrary posets), and (generalized) rank functions of persistence modules. 
Poset maps have also been utilized  in discrete Morse theory, cf.  \cite[Thm.~11.4]{kozlov2008combinatorial}.

When $\PP$ is equipped with a notion of \emph{flow}  \cite{bubenik2015metrics,deSilva2018,scoccola2020locally}, we can define an \emph{interleaving distance} $\dint$ between two \pe{}s $\PP\rightarrow \QQ$. Examples of such include the following.

\emph{1. Interleavings between simplicial filtrations.} A  distance  between $\R$-indexed simplicial filtrations  was proposed by M\'emoli \cite{memoli2017distance}. It turned out that this distance is a generalization of the Gromov-Hausdorff distance between finite metric spaces and thus called the Gromov-Hausdorff distance and denoted by $\dgh$ \cite{memoli2019quantitative}. 
This distance was proved to upper bound the bottleneck distance between persistence diagrams \cite[Thm.~4.2]{memoli2017distance} and even the homotopy interleaving distance by Blumberg and Lesnick \cite{blumberg2017universality}; see \cite{scoccola2020locally}. A certain variant of $\dgh$ also appears in the study of metrics on \emph{Reeb graphs} \cite{bauer2020reeb}.

\begin{figure}
    \centering
    \includegraphics[width=\textwidth]{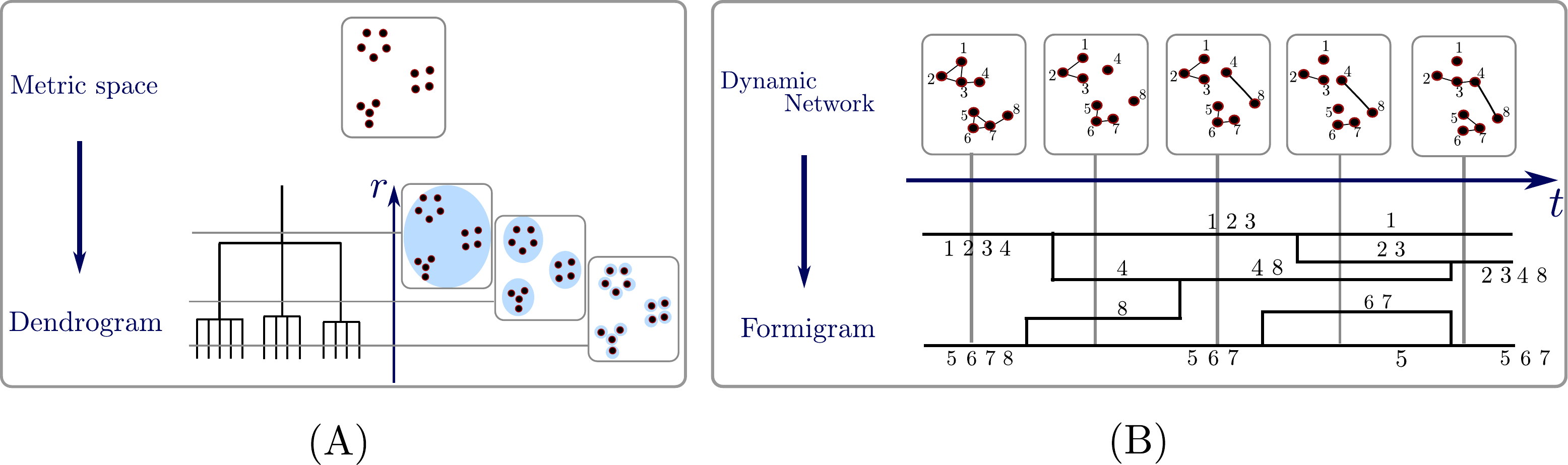}
    \caption{(A) A dendrogram derived from a hierarchical clustering method on a metric space (Ex.\ref{ex:SLHC}). The dendrogram captures  multiscale clustering features of the given metric space. (B) The formigram* derived from a dynamic network. The formigram tracks the evolution of connected components in the dynamic network. {\footnotesize *The name formigram is a combination of the words {formicarium} (a.k.a. ant farm) and {diagram}. Synthetic flocking behaviors have been successfully classified via a certain lower bound for a distance between formigrams (Defn.~\ref{def:general formigram distance}) \cite{kim2020analysis}.} }
    \label{fig:dendrogram and formigram}
\end{figure}

\emph{2. Interleavings between hierarchical clusterings over a poset.} 
Hierarchical clustering methods on a metric space $X$  
uncovers multiscale clustering features of $X$ \cite{carlsson2010characterization}, yielding a \emph{dendrogram} (Fig.~\ref{fig:dendrogram and formigram} (A)), a poset map from $[0,\infty)$ to the lattice of partitions of $X$.
Dendrograms are widely generalized for density-sensitive hierarchical clustering \cite{cai2020elder,carlsson2010multiparameter,rolle2020stable},  for hierarchical clustering on an asymmetric network \cite{smith2016hierarchical}, for
summarizing clustering features in dynamic networks  \cite{buchin2013trajectory,kim2018formigrams}, Fig.~\ref{fig:dendrogram and formigram} (B)). These structures also arise in phylogenetic trees \cite{billera2001geometry,sokal1962comparison,woese1990towards} and phylogenetic networks  \cite{griffiths1997,huson2010phylogenetic,martin2013genome,parida2015topological}. We remark that these structure are finer than merge trees \cite{gasparovic2019intrinsic,morozov2013interleaving} or Reeb graphs \cite{bauer2020reeb,Bauer2015b,chambers2020family,de2016categorified,Stefanou_2020}, as addressed in \cite{kim2017stable}.

\emph{3. Interleavings between poset maps into a Grothendieck group.} 
The \emph{erosion distance} $\dero$ was introduced for comparing \emph{generalized persistence diagrams} for $\R$-indexed persistence modules valued in certain categories $\C$ beyond $\vect$ \cite{patel2018generalized}. Generalized persistence diagrams 
can be encoded as certain poset maps, called \emph{rank functions}, whose target is the \emph{Grothendieck group} of $\C$ \cite{weibel2013k}, equipped with a natural order. $\dero$ is actually an interleaving distance between rank functions.
Even though $\dero$ has been 
adapted to more abstract settings \cite{kim2018generalized,puuska2017erosion}, its most basic use is as
a tractable lower bound for $\dint$ between $\R^n$-indexed persistence modules  \emph{for any $n$} \cite[Section 5]{kim2020spatiotemporal}. Indeed, $\dero$  has been utilized for classifying spatiotemporal persistent homology (encoded as $\R^3$-indexed persistence modules) of time-varying data \cite{clause2020spatiotemporal}.

\paragraph{Other related work.}  The categorification of persistent homology has provided a fertile interpretation of persistence theory and the interleaving distance   \cite{botnan2020relative,bubenik2015metrics,bubenik2017interleaving,bubenik2014categorification,deSilva2018,scoccola2020locally}. Among others, Scoccola \cite{scoccola2020locally} introduced a notion of a \emph{locally persistent category}, which is a category with a notion of approximate morphism. This enables us to define an interleaving
distance between objects in the category, which encompasses many distances in TDA and facilitates a uniform treatment of those distances. For example, sufficient conditions under which an interleaving
distance is geodesic have been found \cite[Thms.4.5.2 and 4.5.16]{scoccola2020locally}.

\paragraph{Our contributions.}

Let $\posetflow$ be a poset \emph{with a flow} (Defn.~\ref{def:flow}) and let $\SSS$ be a poset with a \emph{join-dense} subset $B\subset \QQ$ (which always exists; see Sec.~\ref{sec:lattices}). Let $\Ffunc,\Gfunc:\posetflow\rightarrow \semilattice$ be any two \pe{}s and let $\dint(\Ffunc,\Gfunc)$ be their interleaving distance (Defn.~\ref{def:Interleavings of persistent elements}).

\begin{enumerate}[label=(\roman*),topsep=0pt,itemsep=-1ex,partopsep=1ex,parsep=1ex]
    \item \label{item:contribution 1} We identify join representations $\Ffunc=\bigvee_{b\in B} \Ffunc_b$ and $\Gfunc=\bigvee_{b\in B} \Gfunc_b$ such that  \[\dint(\Ffunc,\Gfunc)=\sup_{b\in B} \dint(\Ffunc_b,\Gfunc_b),\] 
    where each of the $\Ffunc_b$s and $\Gfunc_b$s is structurally simple.
    These join representations can be seen as a rendition of the algebraic decomposition of persistence modules (Rmks.~\ref{rem:interleaving by parts analogy}, \ref{rmk:comparison with isometry theorem} \ref{item:comparison with isometry theorem}) as well as a generalization of \emph{persistence landscapes} \cite{betthauser2019graded,bubenik2015statistical} (Rmk.~\ref{rem:erosion distance} \ref{item:erosion distance1}).
\end{enumerate}
We harness item \ref{item:contribution 1} in order to establish \emph{all} of the following: 
\begin{enumerate}[label=(\roman*),resume,topsep=0pt,itemsep=-1ex,partopsep=1ex,parsep=1ex]
    \item \label{item:contribution 1.5} We show that $\dint$ is an $\ell^\infty$-product\footnote{Given any metric spaces $(M_i,d_i)$, $i\in I$, the $\ell^\infty$-product metric is defined to be the metric $\sup_{i\in I} d_i$ on $\Pi_{i\in I}M_i$.} of multiple copies of a distance between upper sets in $\PP$ (Thm.\ref{thm:general-metric-dec}). 
    \item \label{item:contribution 2} We show that $\dint$, as a distance between  poset maps $\Ffunc,\Gfunc:\PP\to\QQ$, is geodesic under the assumption that $\QQ$ is a complete lattice (Thm.~\ref{thm:di is geodesic}). More specifically, we obtain a geodesic path between $\Ffunc$ and $\Gfunc$ by \emph{assembling} geodesic paths between all $\Ffunc_b$s and $\Gfunc_b$s  via the join operation.

    All the metrics mentioned in subsequent  items can be incorporated into the framework of interleaving distances between poset maps. This enables us to prove in a uniform way that all metrics in the items below  are  geodesic.\footnote{Some of those distances are already  known to be geodesic, but some are not. Known results will be cited at suitable places in the paper.} 
    \end{enumerate}
   
    \begin{enumerate}[label=(\roman*),resume,topsep=0pt,itemsep=-1ex,partopsep=1ex,parsep=1ex]
    \item We show that computing the erosion distance between rank functions of persistence modules 
    amounts to  computing a finite number of Hausdorff distances between certain geometric signatures of \emph{graded rank functions} \cite{betthauser2019graded} (Thm.~\ref{thm:rank invariant}). 
    An analogous statement holds when comparing multiparameter hierarchical clusterings (Thm.~\ref{thm:hierarchical clustering}).  \label{item:contribution 3}
    \item We generalize 
    the Gromov-Hausdorff distance between metric spaces to a distance between simplicial filtrations over $\PP$. This distance inherits a universal property of the original Gromov-Hausdorff distance (Thm.~\ref{thm:universality}),  of which the celebrated Vietoris-Rips filtration stability theorem \cite{chazal2009gromov,chazal2014persistence} becomes a consequence (Thm.~\ref{thm:H_k-lower-bound} and Rmk.~\ref{rem:H_k lower bound} \ref{item:H_k lower bound_VR stability generalization}). 
    \item We establish the equivalence between several known metrics on multiparameter hierarchical clusterings (Defn.~\ref{def:general formigram distance}, Thm.~\ref{thm:structmax}, Rmk.~\ref{rem:relating to GH}).

    \item We elucidate the computational complexity of the interleaving distance between formigrams (Thm.~\ref{thm:df complexity}).
    \label{item:contribution last}

\end{enumerate}

\paragraph{Organization.} Sec.~\ref{sec:preliminaries} reviews the notions of lattices, subpartitions, interleaving distances, and formigrams.  
Sec.~\ref{sec:generalizations} addresses items \ref{item:contribution 1}--\ref{item:contribution 2} above and 
Sec.~\ref{sec:applications} addresses the rest of the items. Sec.~\ref{sec:discussions} discusses open questions.

\section{Preliminaries}\label{sec:preliminaries}

 We review the notions of lattices (Sec.~\ref{sec:lattices}), subpartitions  (Sec.~\ref{sec:subpartitions}), interleaving distances (Sec.~\ref{sec:posets with a flow}), and formigrams (Sec.~\ref{sec:formigrams}).

\subsection{Posets, lattices, and \pe{}s.}\label{sec:lattices}

In this section, we recall basic terminology from the theory of ordered sets and lattices  \cite{erne2003posets,roman2008lattices}. 

A \textbf{poset} $\PP=(\PP,\leq)$ is a nonempty set $\PP$ equipped with a partial order, i.e.~a reflexive, anti-symmetric, and transitive relation $\leq$ on $\PP$. An element $0\in\PP$ is said to be a \textbf{zero element} if $0\leq p$ for all $p\in \PP$. If a zero element exists in $\PP$, then it is unique.
 Thus we refer to $0$ as \textit{the} zero element in $\PP$. An element $1\in \PP$ is said to be a \textbf{unit element} if $p\leq 1$ for all $p\in \PP$. For $p,q\in \PP$ with $p\leq q$, we write $[p,q]$
 for the set $\{r\in \PP: p\leq r\leq q\}$. Also, we write $p^\uparrow$ for the set $\{r\in \PP: p\leq r\}$. 
 An \textbf{upper set} in $\PP$ is a subset $A\subset \PP$ such that if $p\in A$ and $p\leq q$ in $\PP$, then  $q\in A$. 

 A \textbf{join} (a.k.a. least upper bound) of $p_1,\ldots,p_n\in \PP$ is an element $q\in \PP$ such that (i) $p_i\leq q$, for all $i=1,\ldots,n$, and (ii) for any $s\in \PP$, if $p_i\leq s$ for all $i=1,\ldots, n$, then $q\leq s$.
 A \textbf{meet} (a.k.a. greatest lower bound)  of $p_1,\ldots,p_n\in \PP$ is an element $r\in \PP$ such that (i) $r\leq p_i$, for all $i=1,\ldots,n$, and (ii) for any $s\in P$, if $s\leq p_i$ for all  $i=1,\ldots, n$, then $s\leq r$.
 If a join and a meet of $p_1,\ldots,p_n$ exist, then they are unique. Hence,  whenever they exist, we refer to them as \emph{the} join (denoted by $\bigvee \{p_i\}_{i=1}^n$) and \emph{the} meet (denoted by $\bigwedge\{p_i\}_{i=1}^n$), respectively. The poset $\PP$ is said to be a \textbf{join-semilattice} (resp. \textbf{meet-semilattice}) if $\PP$ allows all finite joins (resp. meets). If $\PP$ is both join- and meet-semilattice, then $\PP$ is said to be a \textbf{lattice}. $\PP$ is called a \textbf{complete lattice} if the meet and join of \emph{any} subset (possibly infinite) $A\subset \PP$ exist. 

If a poset $\PP$ has a zero element $0$, then any nonzero $p \in \PP$ such that $[0,p]=\{0,p\}$ is called an \textbf{atom} of $\PP$.
A nonzero element $p$ of a lattice $\PP$ is \textbf{(join-)irreducible} if $p$ is not the join of two
smaller elements, that is, if $p=q\vee r$, then  $p=q$ or $p=r$.  Note that every atom is join-irreducible. For example, for the ordered set $\QQ=\{p<q<r\}$, $p$ is the zero element, $q$ is the unique atom, and 
$q$ and $r$ are join-irreducible.  A \textbf{join representation} of $p\in \PP$ is an expression $\bigvee A$ which evaluates to $p$ for some $A\subset \PP$. When $\PP$ includes a zero element, the zero element has the join representation $\bigvee \emptyset$. A join representation $\bigvee A$ of $p$ is \textbf{irredundant} of $\bigvee A' <\bigvee A$ for each proper subset $A'\subset A$. We say that a subset $A\subset\PP$ \textbf{join-refines} another subset $B\subset \PP$ if, for each $a\in A$, there exists some element $b\in B$ such that $a\leq b$. Join-refinement defines a preorder $\preceq$ on the subsets of $\PP$. Fix $p\in \PP$ and let $\ijr(p)$ be the set of irredundant join representations of $p$. If $(\ijr(p),\preceq)$ has a unique minimum element $A\subset \PP$, then $A$ (or $\bigvee A$) is called the \textbf{canonical join representation of $p$.} 

A subset $B\subset \PP$ is said to be \textbf{join-dense} if every element of $\PP$ is the join of a subset of $B$. Trivially, $\PP$ itself is join-dense. 
The poset $\PP$ is said to be \textbf{$\bigvee$-irreducibly generated} if the set of all join-irreducible elements of $\PP$ is join-dense. 
Every finite poset is $\bigvee$-irreducibly generated \cite{erne1987compact}.

\begin{remark}\label{rem:smallest join-dense}
Any join-dense subset must contain all irreducible elements. Therefore, when $\PP$ is finite, the set of irreducible elements in $\PP$ is the smallest join-dense subset of $\PP$.
\end{remark}

 Given any two posets $\PP$ and $\QQ$, a map $\Ffunc:\PP\rightarrow\QQ$ is called an \textbf{order-preserving map} or a \textbf{poset map} if $p\leq q$ in $\PP$ implies $\Ffunc(p)\leq \Ffunc(q)$. The collection of all poset maps from $\PP$ to $\QQ$ will be denoted by $[\PP,\QQ]$. 
 
 \begin{remark} \label{rmk:functor poset}We regard $[\PP,\QQ]$ as a poset equipped with the partial order inherited from $\QQ$, i.e. 
 $\Ffunc \leq \Gfunc\ \mbox{in $[\PP,\QQ]$} \Leftrightarrow \Ffunc(p)\leq \Gfunc(p), \mbox{for all $p\in \PP$.}$
 \end{remark}

\subsection{Lattice of subpartitions}\label{sec:subpartitions}
In this section we review the notion of subpartition \cite{kim2018formigrams,rolle2020stable,smith2016hierarchical} and show that the collection of all subpartitions of a set is a lattice.

Let us fix a nonempty finite set $X$. A \textbf{partition of $X$} is a collection $P$ of nonempty disjoint $B\subset X$, called \textbf{blocks}, such that the union of all blocks $B$ is equal to $X$. Every partition $P$ of $X$ induces the equivalence relation $\sim_P$ given by: $x\sim_P x' \Leftrightarrow x,x'\in B$ for some block $B\in P$. Reciprocally, any equivalence relation $\sim$ of $X$ induces the partition $X/\!\!\sim$. A  \textbf{subpartition $Q$ of $X$} is 
a partition $Q$ of some $X'\subset X$. The set $X'$ is said to be the \textbf{underlying set} of $Q$. The equivalence relation on $X'$ induced by $Q$ is said to be a \textbf{subequivalence relation} on $X$. 

\begin{definition}[Poset of (sub)partitions] By $\Part(X)$ (resp.
$\subpart(X)$), we denote the set of all partitions (resp. subpartitions) of $X$. Let $P_1,P_2\in \subpart(X)$.
$P_1$ is said to \textbf{refine $P_2$} and write $P_1\leq P_2$, if for any block $B_1\in P_1$, there exists a block $B_2\in P_2$ such that $B_1\subset B_2$ (see Fig. \ref{fig:Hasse} for an example).\footnote{See \cite[Sec.~4]{roman2008lattices} for properties of $\Part(X)$.}  
\end{definition}
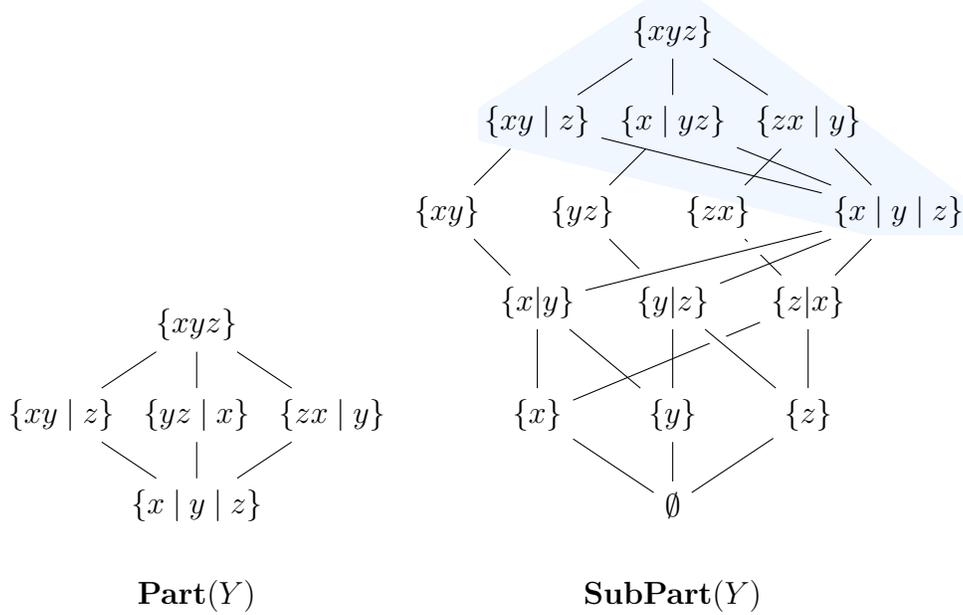
\begin{figure}
    \centering
\begin{tikzpicture}[scale=.6]
  \node (one) at (0,2) {$\{xyz\}$};
  \node (a) at (-3,0) {$\{x y\ |\ z\}$};
  \node (b) at (0,0) {$\{y z\ |\ x\}$};
  \node (c) at (3,0) {$\{z x\ |\ y\}$};
  \node (zero) at (0,-2) {$\{x\ |\ y\ |\ z\}$};
  \draw (zero) -- (a) -- (one) -- (b) -- (zero) -- (c) -- (one);
  \node (l) at (0,-4) {$\Part(Y)$};
\end{tikzpicture}
\begin{tikzpicture}[scale=.6]
 \filldraw[fill=aliceblue,draw=aliceblue,smooth cycle] (0.6,5.2)--(-0.6,5.2)--(-4.3,2.8)--(-4.3,2.1)--(4.3,0)--(6.5,0)--(6.5,0.8)--(0.6,5.2);
  \node (one) at (0,4.5) {$\{xyz\}$};
  \node (a) at (-3,2.5) {$\{x y\ |\ z\}$};
  \node (b) at (0,2.5) {$\{x \ |\ yz\}$};
  \node (c) at (3,2.5) {$\{z x\ |\ y\}$};
  \node (d) at (-5,0.5) {$\{xy\}$};
  \node (e) at (-2,0.5) {$\{yz\}$};
  \node (f) at (1,0.5) {$\{z x\}$};
  \node (g) at (5,0.5) {$\{x\ |\ y\ |\ z\}$};
  \node (dd) at (-3,-1.5) {$\{x|y\}$};
  \node (ee) at (0,-1.5) {$\{y|z\}$};
  \node (ff) at (3,-1.5) {$\{z|x\}$};
  \node (h) at (-3,-4) {$\{x\}$};
  \node (i) at (0,-4) {$\{y\}$}; 
  \node (j) at (3,-4) {$\{z\}$};  
  \node (zero) at (0,-6) {$\emptyset$};
  \draw (zero) -- (h) -- (dd); 
  \draw (d) -- (a);
  \draw (f) -- (c);
  \draw (a) -- (one) --(b); 
  \draw (b) -- (e);
  \draw (i) -- (zero) -- (j) -- (ff); 
  \draw (h) -- (ff);
  \draw (i) -- (dd);
  \draw (c) -- (one); 
  \draw[preaction={draw=white, -,line width=6pt}] (j) -- (ee);
  \draw (g) -- (a);
  \draw (g) -- (b);
  \draw (g) -- (c);
  \draw (dd) -- (d);
  \draw (ee) -- (e);
  \draw[preaction={draw=white, -,line width=6pt}] (ff) -- (f);
  \draw (ee) -- (i);
  \draw[preaction={draw=white, -,line width=6pt}] (dd) -- (g);
  \draw (ee) -- (g);
  \draw (ff) -- (g);
  \node (l) at (0,-8) {$\subpart(Y)$};
\end{tikzpicture}
    \caption{The figure shows the embedding of $\Part(Y)$ into $\subpart(Y)$ for $Y=\{x,y,z\}$ (subdiagram in shaded region).  For simplicity, distinct blocks in a partition are separated by $|$ instead of curly brackets. In $\Part(Y)$, the join-irreducible elements are the atoms $\{xy\ |\ z\},\{yz\ |\ x\},\{zx\ |\ y\}$. On the other hand, in $\subpart(Y)$, the join-irreducible elements are $\{x\},\{y\},\{z\},\{xy\},\{yz\},\{zx\}$, and the first three singletons are in particular its atoms.}
    \label{fig:Hasse}
\end{figure}

Note that for any $P\in\subpart(X)$, we have $\emptyset\leq P\leq \{X\}$, i.e. $\emptyset$ and $\{X\}$ are the zero and unit elements of $\subpart(X)$, respectively. It is well-known that $\Part(X)$ is a lattice \cite{roman2008lattices}. We show that $\Part(X)$ is a sublattice of $\subpart(X)$: Given any $P_1,P_2\in \subpart(X)$, let $X_1$ and 
$X_2$ be the underlying sets of $P_1$ and $P_2$, respectively.
\begin{enumerate}[label=(\roman*)]
    \item The join $P_1\vee P_2$ is the quotient set $(X_1\cup X_2)/\!\!\sim$, where $\sim$ is the smallest equivalence relation on $X_1\cup X_2$ containing $\sim_{P_1}$ and $\sim_{P_2}$. The join $P_1\vee P_2$ is also called the \textbf{finest common coarsening} of $P_1$ and $P_2$.
    \item The meet $P_1\wedge P_2$ is the quotient set $(X_1\cap X_2)/(\sim_{P_1}\cap\sim_{P_2})$. The meet $P_1\wedge P_2$ is also called the \textbf{coarsest common refinement} of $P_1$ and $P_2$.
\end{enumerate}
If $X=X_1=X_2$, then $P_1\vee P_2$ and $P_1\wedge P_2$ clearly belong to $\Part(X)$. Hence:

\begin{proposition}\label{prop:subpart is a lattice}
$\subpart(X)$ equipped with the refinement relation is a lattice. In particular, $\Part(X)$ is a sublattice of $\subpart(X)$.
\end{proposition}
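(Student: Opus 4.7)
The plan is to verify that the candidate formulas for $\vee$ and $\wedge$ spelled out in the paragraph preceding the proposition do indeed satisfy the universal properties of join and meet in the refinement order on $\subpart(X)$, and then to observe that both operations preserve $\Part(X)$. Before that, I would quickly check that refinement is a partial order on $\subpart(X)$: reflexivity and transitivity are immediate, while antisymmetry follows from the fact that if $P_1\leq P_2\leq P_1$ then each block of $P_1$ equals a block of $P_2$ (any containment between blocks of the two subpartitions must be an equality, because otherwise chaining through $P_2\leq P_1$ would enlarge a block of $P_1$, contradicting disjointness of blocks).

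For the join, let $P_1,P_2\in \subpart(X)$ with underlying sets $X_1,X_2$, and set $Y:=X_1\cup X_2$. Let $\sim$ be the smallest equivalence relation on $Y$ containing $\sim_{P_1}\cup\sim_{P_2}$ and define $P_1\vee P_2:=Y/\!\!\sim$. I would first observe that $P_1\leq P_1\vee P_2$: every $\sim_{P_1}$-class is contained in a single $\sim$-class because $\sim_{P_1}\subseteq\sim$, and the analogous containment holds for $P_2$. For minimality, let $Q\in\subpart(X)$ with underlying set $X_Q$ be any common upper bound, $P_1,P_2\leq Q$. From $P_i\leq Q$ I would extract two facts: (i) each block of $P_i$ lies in some block of $Q$, so $X_i\subseteq X_Q$ and hence $Y\subseteq X_Q$; and (ii) the restriction $\sim_Q|_Y$ contains both $\sim_{P_1}$ and $\sim_{P_2}$, so by minimality of $\sim$, one has $\sim\;\subseteq\;\sim_Q|_Y$. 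Together these give $P_1\vee P_2\leq Q$, as needed.

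For the meet, set $Z:=X_1\cap X_2$ and $P_1\wedge P_2:=Z/(\sim_{P_1}\cap\sim_{P_2})$; the intersection of two equivalence relations is itself an equivalence relation, so this quotient is well-defined and lies in $\subpart(X)$. The inequalities $P_1\wedge P_2\leq P_1$ and $P_1\wedge P_2\leq P_2$ follow because $(\sim_{P_1}\cap\sim_{P_2})\subseteq\sim_{P_i}$, forcing each block of $P_1\wedge P_2$ to sit inside a block of $P_i$. For maximality, given any common lower bound $Q\leq P_1,P_2$ with underlying set $X_Q$, one has $X_Q\subseteq X_1$ and $X_Q\subseteq X_2$, hence $X_Q\subseteq Z$, and on $X_Q$ the relation $\sim_Q$ is contained in both $\sim_{P_1}$ and $\sim_{P_2}$, hence in their intersection, yielding $Q\leq P_1\wedge P_2$. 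Since $\subpart(X)$ thus admits all binary joins and meets (and trivially $\emptyset$ as zero), induction gives all finite joins and meets, so $\subpart(X)$ is a lattice. Finally, if $P_1,P_2\in\Part(X)$ then $X_1=X_2=X$, whence $Y=Z=X$ and both $P_1\vee P_2$ and $P_1\wedge P_2$ land in $\Part(X)$; this shows $\Part(X)$ is a sublattice.

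The only step requiring care is the minimality clause for the join: one must correctly identify the two conditions packed into $P_i\leq Q$ (containment of underlying sets and containment of equivalence relations after restriction) and then invoke the defining minimality of $\sim$. Everything else is essentially a matter of unwinding definitions, so I do not expect any serious obstacle.
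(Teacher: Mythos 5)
Your proof is correct and follows essentially the same route as the paper, which simply asserts the join and meet formulas $(X_1\cup X_2)/\!\!\sim$ and $(X_1\cap X_2)/(\sim_{P_1}\cap\sim_{P_2})$ without a written verification; you supply exactly the routine check of the universal properties and of closure of $\Part(X)$ under both operations. No gaps of substance.
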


\begin{remark}\label{rem:about subpart(X)} \begin{enumerate*}[label=(\roman*)]
    \item The atoms of $\subpart(X)$ are $\{\{x\}\}$ for $x\in X$. In what follows, let us assume that $\abs{X}\geq 2$.\label{item:about subpart(X)1}
    \item The join-irreducible elements of $\subpart(X)$ consist of all the atoms and all sets of the form $\{\{x,x'\}\}$ for different $x,x'\in X$. \label{item:about subpart(X)2}
\end{enumerate*}
\end{remark}

\begin{proposition}\label{prop:join representation} Let $X$ be any nonempty finite set.
\begin{enumerate*}[label=(\roman*)]
    \item $\subpart(X)$ is $\bigvee$-irreducibly generated. \label{item:join representation1}
    \item Let $P\in \subpart(X)$ be a nonzero element.  If $P$ includes a block $B$ with $\abs{B}\geq 3$, then $P$ has \emph{no} canonical join representation. \label{item:join representation2}
\end{enumerate*}
\end{proposition}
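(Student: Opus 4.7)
The plan is to leverage the characterization of join-irreducibles in $\subpart(X)$ from Remark~\ref{rem:about subpart(X)}(ii): these are exactly the singleton subpartitions $\{\{x\}\}$ and the one-block subpartitions $\{\{x,x'\}\}$ for distinct $x,x'\in X$. Part~\ref{item:join representation1} then follows by decomposing any $P=\{B_1,\dots,B_k\}\in\subpart(X)$ block by block. I would first write $P=\bigvee_{i=1}^k \{B_i\}$; for each $i$, if $|B_i|\leq 2$ then $\{B_i\}$ is already join-irreducible, and if $|B_i|\geq 3$ then
\[
\{B_i\}=\bigvee_{\{x,y\}\subset B_i,\ x\neq y}\{\{x,y\}\},
\]
since the smallest equivalence relation on $B_i$ containing all pairs $x\sim y$ has $B_i$ as its unique class. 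Concatenating these yields a join representation of $P$ by join-irreducibles.

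For \ref{item:join representation2}, the goal is to exhibit at least two incomparable minimal elements of $(\ijr(P),\preceq)$, so that no minimum, and hence no canonical join representation, can exist. The first step is a combinatorial characterization of $\ijr(P)$. Given $A\in\ijr(P)$, partition $A=\sqcup_i A_i$ according to which block of $P$ contains the underlying set of each element; this is well-defined since every $a\in A$ lies below $P=\bigvee A$ in $\subpart(X)$ and hence has its single block contained in a unique $B_i$. Irredundancy together with $\bigvee A_i=\{B_i\}$ force, for each $i$ with $|B_i|\geq 2$, that $A_i$ contains no singleton: a singleton $\{\{x\}\}\in A_i$ would be redundant if $x$ already lies in a two-element block of $A_i$ (since $\{\{x\}\}\leq\{\{x,y\}\}$), and would otherwise leave $x$ in its own class in $\bigvee A_i$, contradicting $\bigvee A_i=\{B_i\}$. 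The two-element blocks in $A_i$, viewed as edges on $B_i$, then have to form a connected spanning subgraph (so that $\bigvee A_i=\{B_i\}$) in which every edge is a bridge (else removing it would not decrease the join, contradicting irredundancy); that is, $A_i$ is the edge set of a spanning tree of the complete graph $K_{B_i}$.

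With this characterization, the conclusion is short: for the block $B$ with $|B|\geq 3$, $K_B$ admits at least two distinct spanning trees $T\neq T'$. Keeping the choices on the other blocks fixed, the corresponding elements $A_T,A_{T'}\in\ijr(P)$ are $\preceq$-incomparable, because a two-element subpartition $\{\{x,y\}\}$ refines $\{\{x',y'\}\}$ in $\subpart(X)$ only when $\{x,y\}=\{x',y'\}$, and the two spanning trees have the same edge count $|B|-1$; the same observation shows that each such $A_T$ is in fact minimal in $\ijr(P)$. Hence $(\ijr(P),\preceq)$ has multiple incomparable minima and no canonical join representation. The main obstacle is the combinatorial characterization in step one---the spanning-tree identification---which requires the careful case analysis above to rule out singletons in $A_i$ and to convert irredundancy into the bridge condition; once that picture is in place, the incomparability argument is essentially a one-liner.
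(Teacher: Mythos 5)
Part \ref{item:join representation1} of your argument is correct and is essentially the paper's proof: the paper writes $P=\bigvee\{\{x,x'\}:x\sim_P x'\}$, which is exactly your block-by-block decomposition into atoms and doubletons. For part \ref{item:join representation2}, your route is a fleshed-out version of the paper's: the paper simply exhibits, for a block $\{x,y,z\}$, the three representations $\bigvee\{\{x,y\},\{y,z\}\}$, $\bigvee\{\{y,z\},\{z,x\}\}$, $\bigvee\{\{z,x\},\{x,y\}\}$ --- i.e.\ the three spanning trees of $K_3$ --- and declares them minimal; your spanning-tree analysis makes precise why these are irredundant, pairwise incomparable, and minimal, which the paper leaves implicit. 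That extra care is welcome.

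There is, however, one genuine gap in your ``combinatorial characterization of $\ijr(P)$.'' You claim that every element $a$ of an irredundant join representation $A$ of $P$ has a \emph{single} block contained in a unique block $B_i$ of $P$, so that $A$ splits as $\sqcup_i A_i$. This is false: the paper defines a join representation as $\bigvee A$ for an \emph{arbitrary} $A\subset\subpart(X)$, and elements of $A$ need not be join-irreducible. For instance, with $P=\{\{x,y,z\},\{u,v\}\}$ the set $A=\{\,\{\{x,y\},\{u,v\}\},\ \{\{y,z\}\}\,\}$ is an irredundant join representation whose first element has two blocks lying in different blocks of $P$. So your spanning-forest description does not capture all of $\ijr(P)$, and the one-line justification of minimality of $A_T$ (``the same observation shows\dots'') does not literally apply to such competitors. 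The conclusion nevertheless survives, because minimality of $A_T$ only requires analyzing $C\in\ijr(P)$ with $C\preceq A_T$: every $c\in C$ then lies below an atom or a doubleton of $A_T$, hence has at most two underlying points and contributes at most one edge of $T$; since only $\{\{x,y\}\}$ itself contributes the edge $\{x,y\}$, $C$ must contain every doubleton of $A_T$ (and every singleton for the singleton blocks), giving $A_T\preceq C$. Alternatively you can bypass minimality: a minimum $M$ would satisfy $M\preceq A_T$ and $M\preceq A_{T'}$, so each $m\in M$ lies below the meet of an element of $A_T$ and one of $A_{T'}$; for $T=\{xy,yz\}$ and $T'=\{yz,zx\}$ the only edge of $B$ such an $m$ can contribute is $\{y,z\}$, so $\bigvee M$ cannot have $B$ as a block. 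Either patch closes the gap.
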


\begin{proof}\ref{item:join representation1}: Let $\sim_P$ be the subequivalence relation on $X$ corresponding to $P$. Then $P=\bigvee A$ where $A:=\left\{\{x,x'\}: x\sim_P x' \right\}$ (when $x=x'$, the set $\{x,x'\}$ is a singleton). By Rmk.~\ref{rem:about subpart(X)} \ref{item:about subpart(X)2}, $P=\bigvee A$ is a join representation by irreducible elements. 

\ref{item:join representation2}: Without loss of generality, assume that $\{x,y,z\} \in P$. Observe that the following three are minimal join representations of the single block partition $\{xyz\}$ and hence a minimum does not exist:
 \[\bigvee\left\{\{xy\},\{yz\}\right\},\ \ \bigvee\left\{\{yz\},\{zx\}\right\},\ \  \bigvee\left\{\{zx\},\{xy\}\right\}.\]
This implies that, given a minimal join representation $\bigvee A$ of $P$ by irreducible elements, exactly one of the three subsets $\left\{\{xy\},\{yz\}\right\},\ \left\{\{yz\},\{zx\}\right\},\  \left\{\{zx\},\{xy\}\right\}$ is a subset of $A$. Whatever the case is, the corresponding subset can be replaced by any of the other two, and thus there is no canonical join representation of $P$.
\end{proof}

\subsection{Posets with a flow and interleaving distances}\label{sec:posets with a flow}
We review the notion of \emph{poset with a flow} and its associated interleaving distance \cite{bubenik2015metrics,deSilva2018}.

\paragraph{Flows and interleavings}
For a poset $\PP$, let $\Ifunc_{\PP}$ be the identity map on $\PP$.
 \begin{definition}\label{def:flow}
A (strict) \textbf{flow} on a poset $\PP$ is a family  $\Omega:=\{\Omega_\eps:\PP\to \PP\}_{\eps\in [0,\infty)}$ of \pe{}s on $\PP$ such that
(i) $\Omega_s\leq\Omega_t$ for all $s\leq t$, (ii) $\Omega_{t}\Omega_s=\Omega_{t+s}$, for all $s,t\in [0,\infty)$, and (iii) $\Ifunc_{\PP}=\Omega_0$. We call the triple $\posetflow$ a \textbf{poset with a flow.}
\footnote{By weakening conditions (ii) and (iii), we obtain the notion of \emph{coherent flow} \cite{deSilva2018} (or  \textit{superlinear family of translations} \cite{bubenik2015metrics}). This level of generality is not required for the purpose of this paper.} 
 \end{definition}

We define an extended pseudometric between point in a poset with a flow \cite{deSilva2018} as follows:

  \begin{definition}[Interleaving of poset elements]
  \label{dfn:poset-ints}
  Let $\posetflow$ be a poset with a flow. For $\eps\in [0,\infty)$, any $p,q\in \PP$ are said to be \textbf{$\e$-interleaved} if $p\leq \Omega_\e(q)$ and $q\leq \Omega_\e(p)$.
  The \textbf{interleaving distance between $p$ and $q$} is defined as
 \[d_{\mathrm{\Omega}}(p,q):=\inf\left\{\e\in [0,\infty) : p,q\text{ are }\e\text{-interleaved}\right\}.\]
 If $p,q$ are not $\e$-interleaved for any $\e\in [0,\infty)$, then we declare that $d_{\mathrm{\Omega}}(p,q)=\infty$.
 \end{definition}

By \cite[Lem.~3.7]{deSilva2018} and \cite[Thm.~3.21]{bubenik2015metrics}, we know that $d_{\mathrm{\Omega}}$ is an extended pseudometric on $\PP$. For example, let $\R^n$ be equipped with the product order given as $(x_1,\ldots,x_n)\leq (y_1,\ldots,y_n)$ $\Leftrightarrow$ $x_i\leq y_i$ for each $i=1,\ldots,n$. Then, the supremum norm distance $\norm{\cdot-\cdot}_\infty$ in $\R^n$ coincides with the interleaving distance with the flow $\Omega$ \cite{deSilva2018} given as
\begin{equation}\label{eq:l-infinity}
   \Omega:= \big(\Omega_{\eps}:(x_1,\ldots,x_n)\mapsto (x_1,\ldots,x_n)+\eps(1,\ldots,1)\big)_{\eps\in [0,\infty)}.
\end{equation} 
Next, we introduce two special examples of Defn. \ref{dfn:poset-ints}.

\paragraph{Interleaving distance between \pe{}s.} 

 Let $(\PP,\leq,\Omega)$ be a poset with a flow, and let $(\QQ,\leq)$ be another poset.
 Recall that $[\PP,\QQ]$ is a poset (Rmk.~\ref{rmk:functor poset}).
 The flow $\Omega$ on $(\PP,\leq)$ yields the flow $-\cdot\Omega$, given by pre-composition with $\Omega$, on $[\PP,\QQ]$.
 Thus,  Defn.~\ref{dfn:poset-ints} is specialized to:
 
 \begin{definition}
 \label{def:Interleavings of persistent elements}
 The \textbf{interleaving distance between \pe{}s  $\Ffunc,\Gfunc:(\PP,\leq,\Omega)\to(\QQ,\leq)$} is defined as:
 \[\dint(\Ffunc,\Gfunc):=\inf\left\{\e\in [0,\infty): \Ffunc,\Gfunc\text{ are }\e\text{-interleaved w.r.t. the flow $-\cdot\Omega$}\right\}.\]
 \end{definition}

\paragraph{Interleaving distance between upper sets.}  
Let $(U(\PP),\subset)$ be the poset of upper sets of $\PP$ with the inclusion relation. 
Then, the flow $\Omega$ on $\PP$ gives rise to a family   $\widehat{\Omega}=(\widehat{\Omega}_\e)_{\e\in [0,\infty)}$ of \pe{}s  $U(\PP)\rightarrow U(\PP)$ given, for each $A\in U(\PP)$, as
\[\widehat{\Omega}_\e(A):=\left\{p\in \PP:\Omega_\e(p)\in A\right\}.\]
Indeed $\widehat{\Omega}_\e(A)$ is an upper set. To see this let $x\in \widehat{\Omega}_\e(A)$ and let $x\leq y$ in $\PP$. Then $\Omega_\e(x)\in A$ and $\Omega_\e(x)\leq \Omega_\e(y)$. Since $A$ is an upper set, $\Omega_\e(y)\in A$, implying that $y\in\widehat{\Omega}_\e(A)$.

\begin{proposition}
$\widehat{\Omega}$ is a flow on $U(\PP)$.
\end{proposition}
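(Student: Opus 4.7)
The plan is to verify the three flow axioms from Defn.~\ref{def:flow} directly, after first checking that $\widehat{\Omega}_\e$ is a well-defined order-preserving endomap of $(U(\PP),\subset)$. Two preliminary verifications come first: that $\widehat{\Omega}_\e(A)$ is genuinely an upper set, and that $A\subset B$ in $U(\PP)$ implies $\widehat{\Omega}_\e(A)\subset\widehat{\Omega}_\e(B)$. Both are immediate pointwise checks: for the upper-set property, if $p\in\widehat{\Omega}_\e(A)$ and $p\leq q$, then order-preservation of $\Omega_\e$ gives $\Omega_\e(p)\leq\Omega_\e(q)$, and since $A$ is an upper set containing $\Omega_\e(p)$, it contains $\Omega_\e(q)$; order-preservation of $\widehat{\Omega}_\e$ is even more direct.

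Next I would unwind the three axioms. For axiom (iii), $\widehat{\Omega}_0(A)=\{p\in\PP:\Omega_0(p)\in A\}=\{p\in\PP:p\in A\}=A$, using $\Omega_0=\Ifunc_\PP$; hence $\widehat{\Omega}_0=\Ifunc_{U(\PP)}$. For axiom (ii), I chase through the definition: for any $A\in U(\PP)$,
\[
p\in\widehat{\Omega}_t(\widehat{\Omega}_s(A))\ \Longleftrightarrow\ \Omega_t(p)\in\widehat{\Omega}_s(A)\ \Longleftrightarrow\ \Omega_s(\Omega_t(p))\in A\ \Longleftrightarrow\ \Omega_{s+t}(p)\in A\ \Longleftrightarrow\ p\in\widehat{\Omega}_{s+t}(A),
\]
where the third equivalence uses the semigroup identity $\Omega_s\Omega_t=\Omega_{s+t}$ from the flow on $\PP$.

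For axiom (i), recalling that $\widehat{\Omega}_a\leq\widehat{\Omega}_b$ in $[U(\PP),U(\PP)]$ means $\widehat{\Omega}_a(A)\subset\widehat{\Omega}_b(A)$ for every $A\in U(\PP)$ (cf.\ Rmk.~\ref{rmk:functor poset}), I take $a\leq b$ and $p\in\widehat{\Omega}_a(A)$, so $\Omega_a(p)\in A$. The monotonicity assumption on $\Omega$ gives $\Omega_a(p)\leq\Omega_b(p)$, and since $A$ is an upper set, this forces $\Omega_b(p)\in A$, i.e.\ $p\in\widehat{\Omega}_b(A)$.

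There is no real obstacle here: the proof is essentially bookkeeping, and the only subtle point is remembering that the partial order on the functor poset $[U(\PP),U(\PP)]$ is pointwise inclusion (Rmk.~\ref{rmk:functor poset}), so axiom~(i) reduces to the set-theoretic statement verified above. The argument uses each of the three defining properties of the flow $\Omega$ on $\PP$ exactly once, which is a mild sanity check that the definition of $\widehat{\Omega}$ is the right one.
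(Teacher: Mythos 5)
Your proof is correct and follows essentially the same route as the paper's: axiom (iii) by $\Omega_0=\Ifunc_\PP$, axiom (i) from monotonicity of $\Omega$ in $\e$ together with $A$ being an upper set, and axiom (ii) by the element-chase through the semigroup identity. The only difference is that you also verify that $\widehat{\Omega}_\e$ is a well-defined order-preserving endomap of $(U(\PP),\subset)$, a point the paper asserts without proof; this is a harmless (and arguably welcome) addition.
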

\begin{proof}
Let $A\in U(\PP)$. The equality $A=\widehat{\Omega}_0(A)$ is clear. Let $t\leq s$ in $[0,\infty)$. Then, 
    \begin{align*}
    \widehat{\Omega}_t(A)&=\left\{p\in\PP:\Omega_t(p)\in A\right\}\\
    &\subset\left\{p\in\PP: \Omega_{s}(p)\in A\right\}&\text{ since }\Omega_t(p)\leq \Omega_{s}(p)\text{ and }A\text{ is an upper set.}\\
    &=    \widehat{\Omega}_s(A).
    \end{align*}
Also, for any $s,t\in [0,\infty)$,
\[ 
    \widehat{\Omega}_t(\widehat{\Omega}_s(A))=\left\{p\in\PP:\Omega_t(p)\in \widehat{\Omega}_s(A)\right\}
    =\left\{p\in\PP:\Omega_s(\Omega_t(p))\in A\right\}
        =\left\{p\in\PP:\Omega_{t+s}(p)\in A\right\}
    =\widehat{\Omega}_{t+s}(A).
\] 
\end{proof}

\begin{definition}[Interleaving of upper sets]\label{def:Interleavings of upper sets}
  Let $(\PP,\leq,\Omega)$ be a poset together with a flow. Then, $d_{\widehat{\Omega}}$ will denote the induced interleaving distance on the poset $(U(\PP),\subset, \widehat{\Omega})$ of upper sets of $\PP$.
\end{definition}

The following remark and example will be useful in later sections.
\begin{remark}\label{rem:upper sets and omega hat properties} 
\begin{enumerate}[label=(\roman*)]
\item Arbitrary unions and intersections of upper sets in $\PP$ yield upper sets, implying that $U(\PP)$ is a complete lattice.\label{item:complete lattice} 
\item 
For any family $(A_i)_i$ of upper sets in $\PP$, we have
\[\widehat{\Omega}_\e\left(\bigcap_i A_i\right)=\bigcap_i\widehat{\Omega}_\e(A_i)\ \mbox{and } \widehat{\Omega}_\e\left(\bigcup_i A_i\right)=\bigcup_i\widehat{\Omega}_\e(A_i).\]
In other words, $\widehat{\Omega}_\eps$ preserves arbitrary meets and joins.\label{item:union and intersection preserved}
\end{enumerate}

\end{remark}

\begin{example}\label{ex:upset of int} 
Consider the poset
    $\Int:=\{(a,b)\in \R^2: a\leq b\}$,
the upper-half plane in $\R^2$ above the diagonal line $y=x$, equipped with the order $(a,b)\leq (a',b')$ $\Leftrightarrow$ $a'\leq a<b\leq b'$. Let us define the flow $\Omega$ on $\Int$ by 
\begin{equation}\label{eq:Int flow}
\Omega:= \big(\Omega_{\eps}:(a,b)\mapsto (a-\eps,b+\eps)\big)_{\eps\in [0,\infty)}.
\end{equation}
For the poset $(U(\Int),\subset,\widehat{\Omega})$ of upper sets in $\Int$, the distance $d_{\widehat{\Omega}}$  coincides with the Hausdorff distance $\dhaus$ in $(\Int,\norm{-}_{\infty})$ (Defn. \ref{def:Hausdorff distance}). This follows from the observation that for any $A\in U(\Int)$, the $\eps$-thickened set 
\begin{equation}\label{eq:thickening}
    A^{\eps}: =\left\{(a,b)\in \Int: \exists (a',b')\in A, \mbox{ such that } \norm{(a,b)-(a',b')}_{\infty}\leq \eps\right\},
\end{equation} coincides with $\widehat{\Omega}_{\eps}(A)$.
\end{example}

\subsection{Formigrams and their interleavings}\label{sec:formigrams}

In this section we review the notion of formigrams and their specialized interleaving  and Gromov-Hausdorff distances \cite{kim2018formigrams,kim2017stable} (note: Secs.~\ref{sec:generalizations}, \ref{sec:erosion and graded}, and \ref{sec:tripod} can be read without reading this section).

\paragraph{Formigrams.} We begin by reviewing the definition of dendrograms. Let us fix a nonempty finite set $X$.

\begin{definition}[\cite{carlsson2010characterization}]\label{dfn:dendrograms}	A \textbf{dendrogram} over a finite set $X$ is any function $\theta:\Rplus\rightarrow \Part(X)
 	$ such that the following properties hold: (1) $\theta(0)=\{\{x\}: x\in X\}$, (2) if $t_1\leq t_2$, then \hbox{$\theta(t_1) \leq \theta(t_2)$}, (3) there exists $T>0$ such that $\theta(t)=\{X\}$ for $t\geq T$, (4)  for all $t$ there exists $\eps>0$ s.t. $\theta(s)=\theta(t)$ for $s\in [t,t+\eps]$ (right-continuity) (see Fig.~\ref{fig:dendrogram and formigram} (A)). 
The \textbf{ultrametric induced by $\theta$} is the distance function $u_{\theta}:X\times X\rightarrow \Rplus$ defined as
\[u_{\theta}(x,x'):=\min\left\{t\in \R: \mbox{$x\sim_{\theta(t)}x'$}\right\}.\]
Note that we have the ultra-triangle inequality: $u_{\theta}(x,x'')\leq \max\left\{u_{\theta}(x,x'),u_{\theta}(x',x'')\right\}$ for every $x,x',x''\in X$.
\end{definition}

\begin{example}\label{ex:SLHC} The \textbf{single linkage hierarchical clustering} method on a finite metric space $(X,d_X)$ induces the dendrogram $\theta:\Rplus\rightarrow \Part(X)$ given as $t\mapsto X/\!\!\sim_t$, where $\sim_t$ is the transitive closure of the relation $R_t:=\{(x,x')\in X\times X:d_X(x,x')\leq t\}$ (Fig. \ref{fig:dendrogram and formigram} (A)). The mapping $(X,d_X)\mapsto (X,u_{\theta})$ is known to be 1-Lipschitz with respect to the Gromov-Hausdorff distance (Def.~\ref{def:Gromov-Hausdorff}) \cite[Prop.2]{carlsson2010characterization}, i.e.
\begin{equation}\label{eq:SHLH GH stability}
    \dgh((X,u_{\theta_X}),(Y,u_{\theta_Y}))\leq  \dgh((X,d_X),(Y,d_Y)).
\end{equation}
\end{example}

 \emph{Formigrams}, a mathematical model for time-varying clusters in  dynamic networks {\cite{kim2018formigrams}}, are a generalization of dendrograms. Formigrams are defined as \emph{constructible cosheaves over $\R$ \cite{curry2016classification,de2016categorified}} with values in $\subpart(X)$, which amounts to a (\emph{costalk}-)function $\R\to\subpart(X)$ described as follows.

\begin{definition}
\label{dfn:formi}
A \textbf{formigram} over $X$ is a \emph{function}\footnote{But not necessarily a poset map.}  $\theta:\R\to\subpart(X)$  satisfying the following:
there exists a finite set $\mathbf{crit}(\theta)=\{t_1,\ldots,t_n\}\subset \R$ of \textbf{critical points} s.t.
\begin{enumerate*}[label=(\roman*)]
    \item $\theta$ is constant on $(t_i,t_{i+1})$ for each $i=0,\ldots,n$, where $t_0=-\infty$ and $t_{n+1}=\infty$.
    \item At each critical point, $\theta$ is locally maximal, i.e.
     \label{item:formi-comparability}
\end{enumerate*}
\begin{equation}\label{eq:comparability}
\mbox{ for $t_i\in \crit(\theta)$ and for $\eps\in \big[0,\min_{j\in\{i,i+1\}}(t_j-t_{j-1})\big)$,  }\ \ 
        \theta(t_i-\e)\leq \theta(t_i) \geq \theta(t_i+\e).
    \end{equation} 
\end{definition}

See Fig.~\ref{fig:dendrogram and formigram} (B), Fig.~\ref{fig:formi2}, and Fig.~\ref{fig:precosheaf}~(A) for illustrative examples. Note that $\crit(\theta)$ is not necessarily unique nor nonempty. We also remark that, given a dendrogram $\theta:\Rplus\rightarrow \Part(X)$,  $\theta$ can be seen as a formigram by trivially extending its domain to $\R$, i.e. $\theta(t):=\emptyset$ for $t\in (-\infty,0)$.

Since $\subpart(X)$ is a poset,  the collection of all formigrams on $X$ can be regarded as a poset in its own right when endowed with the partial order defined as 
    $\theta\leq\theta' \Leftrightarrow \theta(t)\leq\theta'(t)\text{, for all }t\in\R.$

\begin{definition}\label{dfn:poset of formigrams}
By $\Formi(X)$, we denote the poset of all formigrams over $X$. 
\end{definition}

\paragraph{Interleaving distance on $\Formi(X)$.}
For $t\in\R$ and $\e\in[0,\infty)$ we denote the closed interval $[t-\e,t+\e]$ of $\R$ by $[t]^\e$.
\begin{definition}[{\cite{kim2017stable}}]
\label{def:smoothing}
Let $\theta:\R\to\subpart(X)$ be a formigram and let $\e\in [0,\infty)$.
We define the \textit{$\e$-smoothing $\Sfunc_\e(\theta):\R\to\subpart(X)$ of $\theta$} as $\Sfunc_\e(\theta)(t):=\bigvee\left\{\theta(s):{s\in [t]^\e}\right\}\text{, for }t\in\R.$
\end{definition}

The family $\Sfunc=(\Sfunc_{\eps})_{\eps\in [0,\infty)}$ is a flow on the poset $\Formi(X)$ (Defn. \ref{def:flow}):
\begin{proposition}[{\cite{kim2017stable}}]\label{prop:S_e-properties}
Let $\e\in [0,\infty)$, and let $\theta$ be a formigram over $X$. Then, \begin{enumerate*}[label=(\roman*)] \item the $\e$-smoothing $\Sfunc_\e(\theta)$ of $\theta$ is also a  formigram over $X$ and $\Sfunc_0(\theta)=\theta$. \item Also, for any $a,b\in [0,\infty)$, 
    we have: $\Sfunc_{a}(\Sfunc_{b}(\theta))=\Sfunc_{a+b}(\theta)$. \item Let $\theta'$ be another formigram over $X$.   For any $\e\in [0,\infty)$, 
    we have: $\theta\leq \theta'\Rightarrow \Sfunc_\e(\theta)\leq\Sfunc_\e(\theta')$.
\end{enumerate*} 
\end{proposition}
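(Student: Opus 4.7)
The plan is to handle parts (ii) and (iii) by direct computation and to devote the bulk of the effort to (i), which requires a structural analysis of $\Sfunc_\e(\theta)$ as a function of $t$.

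For (ii), I would unwrap the nested joins, $\Sfunc_a(\Sfunc_b(\theta))(t)=\bigvee_{s\in[t]^a}\bigvee_{r\in[s]^b}\theta(r)=\bigvee\{\theta(r):r\in\bigcup_{s\in[t]^a}[s]^b\}$, and then apply the elementary interval identity $\bigcup_{s\in[t-a,\,t+a]}[s-b,\,s+b]=[t]^{a+b}$ to obtain $\Sfunc_{a+b}(\theta)(t)$. For (iii), the pointwise inequality $\theta(s)\leq\theta'(s)$ for every $s\in[t]^\e$ passes to the joins by monotonicity of $\bigvee$. The first assertion of (i), $\Sfunc_0(\theta)=\theta$, is immediate because $[t]^0=\{t\}$.

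The substantive part of (i) is to show that $\Sfunc_\e(\theta)$ is itself a formigram. I would exhibit the finite candidate critical set $C:=\{t_i+\e,\,t_i-\e:t_i\in\crit(\theta)\}$ and verify both conditions of Defn.~\ref{dfn:formi} with respect to $C$. The key lemma is the reduction formula $\Sfunc_\e(\theta)(t)=\bigvee\{\theta(t_i):t_i\in\crit(\theta)\cap[t]^\e\}$ whenever this set is nonempty, with $\Sfunc_\e(\theta)(t)=\theta(t)$ otherwise. Its proof exploits the local maximality of $\theta$: on each open piece $(t_j,t_{j+1})$ between consecutive critical points, the value $\theta|_{(t_j,t_{j+1})}$ is a constant dominated by both $\theta(t_j)$ and $\theta(t_{j+1})$, so any contribution from such a piece to the join over $[t]^\e$ is absorbed by an adjacent critical-point value; moreover, in the nonempty case at least one adjacent critical point must itself lie in the window, for otherwise $[t]^\e\subset(t_j,t_{j+1})$ and the window would contain no critical points, a contradiction. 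Granted the formula, piecewise constancy of $\Sfunc_\e(\theta)$ between consecutive elements of $C$ follows because $\crit(\theta)\cap[t]^\e$ is unchanged on such intervals, and in the "otherwise" case $\theta(t)$ is constant there as well, since such intervals refine the intervals between consecutive critical points of $\theta$.

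For local maximality at each $t'\in C$, I would run a case analysis on whether $t'=t_i+\e$ or $t'=t_i-\e$ for some $t_i\in\crit(\theta)$. The main obstacle is the bookkeeping when multiple critical points of $\theta$ enter or leave the window $[t]^\e$ simultaneously as $t$ crosses $t'$. The remedy is to choose $\delta>0$ smaller than every nonzero gap $|t_j\pm\e-t'|$ with $t_j\in\crit(\theta)$ and $t_j\pm\e\neq t'$, so that the only critical-point crossings occurring as $t$ moves from $t'\pm\delta$ to $t'$ are the very crossings that placed $t'$ into $C$; in both cases these are gains rather than losses. Hence $\crit(\theta)\cap[t'\pm\delta]^\e\subseteq\crit(\theta)\cap[t']^\e$, and monotonicity of joins together with the reduction formula (applied at each endpoint, with a short additional comparison when the window at $t'\pm\delta$ falls into the "otherwise" case) yields $\Sfunc_\e(\theta)(t'\pm\delta)\leq\Sfunc_\e(\theta)(t')$, as required.
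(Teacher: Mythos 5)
The paper does not prove this proposition itself—it is quoted from \cite[Prop.~6.24 and 6.25]{kim2017stable}—so there is no in-paper argument to compare against; judged on its own terms, your proof is correct. Parts (ii) and (iii) are exactly the expected computations (the union identity $\bigcup_{s\in[t]^a}[s]^b=[t]^{a+b}$ together with associativity of joins in the finite, hence complete, lattice $\subpart(X)$, and monotonicity of $\bigvee$). For part (i), your reduction formula $\Sfunc_\e(\theta)(t)=\bigvee\{\theta(t_i):t_i\in\crit(\theta)\cap[t]^\e\}$ (nonempty case) is the right structural lemma, and your absorption argument via local maximality of $\theta$ at its critical points, plus the observation that a piece meeting the window with neither endpoint inside forces $[t]^\e\subset(t_j,t_{j+1})$, is sound; the constancy of $\crit(\theta)\cap[t]^\e$ between consecutive points of $C=\{t_i\pm\e\}$ and the gain-only inclusion $\crit(\theta)\cap[t'\pm\delta]^\e\subseteq\crit(\theta)\cap[t']^\e$ for your choice of $\delta$ both check out, and since $t'=t_i\pm\e$ forces $t_i\in[t']^\e$, the set at $t'$ is never empty. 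Two small points you should make explicit: in the ``otherwise'' case at $t'\pm\delta$ the comparison works because one may additionally take $\delta<2\e$ (or $\delta\le\e$), so the windows at $t'\pm\delta$ and $t'$ overlap and the constant value $\theta(t'\pm\delta)$ equals $\theta(s)$ for some $s\in[t']^\e$, hence is $\le\Sfunc_\e(\theta)(t')$; and the degenerate cases $\e=0$ and $\crit(\theta)=\emptyset$ should be dispatched separately (both trivial). Neither is a gap in substance, only in bookkeeping.
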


By Defn.~\ref{dfn:poset-ints} and Prop.~\ref{prop:S_e-properties}, we have: 
\begin{definition}
\label{def:intrinsic}
The \textbf{interleaving distance between $\theta,\theta'\in \Formi(X)$} is
\[\df(\theta,\theta'):=\inf\left\{\e\in [0,\infty) : \theta,\theta'\text{ are }\e\text{-interleaved w.r.t. the flow $\Sfunc$}\right\}.\] 
\end{definition}

Since Defn.~\ref{def:intrinsic} is a special case of Defn.~\ref{dfn:poset-ints}, we readily know that $\df$ is an extended pseudometric. In fact, $\df$ is an extended \emph{metric}, not just a pseudometric. This can be proved by exploiting the fact that a formigram has finitely many critical points; similar ideas can be found in \cite[Thm.~4.5]{kim2017stable}. We remark that when restricted to \emph{treegrams} \cite{smith2016hierarchical} (cf. Rmk.~\ref{rem:isometry theorem for formigrams} \ref{item:isometry theorem for formigrams2}) $\df$ agrees with what's been called the \emph{$\ell^\infty$-cophenetic metric on phylogenetic trees \cite{cardona2013cophenetic,munch2019ell}}.

\begin{example}
\label{ex:loop}
Let $\delta>0$. Consider the formigrams $\theta,\theta':\R\to\subpart(\{x,y\})$ defined as:
\begin{figure}
 \centering
\includegraphics[width=\textwidth]{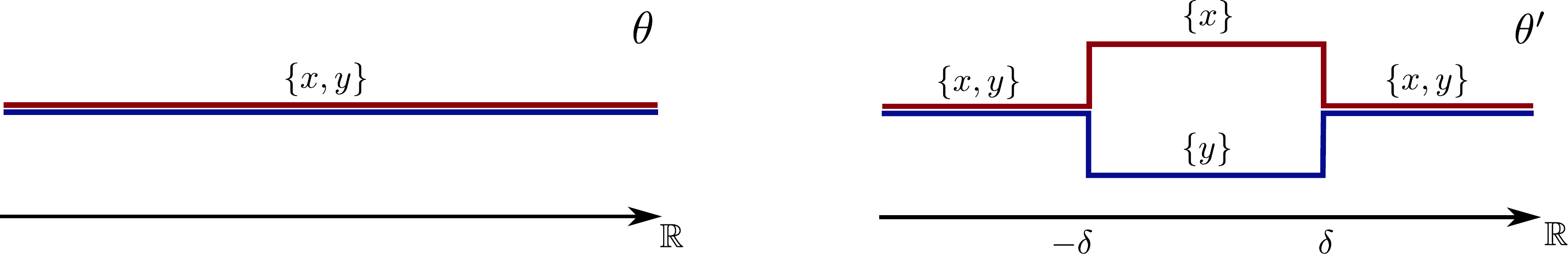}
    \caption{The two formigrams in Example \ref{ex:loop} 
    }
    \label{fig:formi2}
\end{figure}
\[\theta(t)=\{\{x,y\}\}\text{, for all }t\in\R,\ \ \  \ \ \ \ 
\theta'(t)=
\begin{cases} 
      \{\{x,y\}\}, & \text{if }t\leq -\delta\\
     \{\{x\},\{y\}\}, & \text{if }-\delta< t<\delta\\
     \{\{x,y\}\}, & \text{if }t\geq\delta
   \end{cases}
\]
(see Fig.~\ref{fig:formi2}).
We prove $d_\mathrm F(\theta,\theta')=\delta$. Let $\e\in [0,\infty)$.
Because the partition $\{\{x\},\{y\}\}$  refines the partition $\{\{x,y\}\}$ , we  have $\theta'\leq\Sfunc_\e(\theta)$.
Let us assume $\e\in [0,\delta)$: Then, we have $\Sfunc_\e(\theta')(0)=\bigvee_{s\in [0]^\e}\theta'(s)=\{\{x\},\{y\}\}$ and $\theta(0)=\{\{x,y\}\}$.
Thus, $\theta'\not\leq\Sfunc_\e(\theta)$, and $\theta,\theta'$ are not $\e$-interleaved.
On the other hand, $\Sfunc_\e(\theta')=\theta$ for $\e\in[\delta,\infty)$.
and thus $d_\mathrm F(\theta,\theta')=\delta$.
\end{example}

\paragraph{Gromov-Hausdorff distance between formigrams.} We may wish to compare formigrams over different sets. To this end, we revisit the Gromov-Hausdorff distance $\dintf$ between formigrams {\cite{kim2017stable,kim2018formigrams}}; the naming of the distance is based on the fact that it generalizes the Gromov-Hausdorff distance between finite ultrametric spaces \cite{kim2018formigrams} (see Rmk.~\ref{rem:relating to GH} \ref{item:relating to GH2}). See Defn.~\ref{def:Gromov-Hausdorff} for the original definition of the Gromov-Hausdorff distance between compact metric spaces. (\emph{Note}: All the Gromov-Hausdorff distances in this paper will turn out to be special instances of the generalized Gromov-Hausdorff distance from Defn.~\ref{def:generalized tripod distance}, as proved in Prop.~\ref{prop:omnipresence of dt} in the appendix.)

 Let $X,Z$ be two nonempty sets, let $P\in\subpart(X)$, and let $\varphi:Z\twoheadrightarrow X$ be a surjective map. The \textbf{pullback} of $P$  via $\varphi$ is the subpartition of $Z$ defined as $\varphi^\ast P:=\{\varphi^{-1}(B)\subset Z: B\in P\}$, implying: \begin{equation}\label{eq:pullback equivalence}\mbox{$z,z'\in Z$ belong to the same block of $\varphi^\ast P$ $\Leftrightarrow \varphi(z),\varphi(z')\in X$ belong to the same block of $P$.}\end{equation}

 Let $\theta_X$ be a formigram over $X$. The pullback of $\theta_X$ via $\varphi$ is the formigram 
\[\mbox{$\varphi^\ast\theta_X:\R\rightarrow \subpart(Z)$ such that   }\left(\varphi^\ast\theta_X\right)(t):=\varphi^\ast\left(\theta_X(t)\right).
\]

Let $X$ and $Y$ be any two nonempty sets. A \emph{tripod $R$ between $X$ and $Y$} is a pair of surjections $\tripod$ from any set $Z$ onto $X$ and $Y$ \cite{memoli2017distance}. For $x\in X$ and $y\in Y$, we write $(x,y)\in R$ when there exists $z\in Z$ such that $x=\varphi_X(z)$ and $y=\varphi_Y(z)$.

\begin{definition}\label{def:general formigram distance} Let $\theta_X$,$\theta_Y$ be any two formigrams over $X$ and $Y$, respectively. The \textbf{Gromov-Hausdorff distance between $\theta_X$ and $\theta_Y$} is defined as: 
\[\dintf(\theta_X,\theta_Y):=\frac{1}{2}\min_R \df (\varphi_X^\ast\theta_X,\varphi_Y^\ast\theta_Y), \]
where the minimum is taken over all tripods between $X$ and $Y$.\footnote{Because $X$ and $Y$ are finite, the minimum is always achieved by a certain tripod  $R$. In fact, it suffices to consider subsets $Z\subset X\times Y$ which project \emph{onto} $X$ and $Y$ via the canonical projections with  $\varphi_X,\varphi_Y$ being the canonical projections.} \footnote{The Gromov-Hausdorff distance between formigrams in this paper is actually called the \emph{formigram interleaving distance} in \cite{kim2018formigrams}, \cite[Defn. 4.11]{kim2017stable}.  In this paper, we reserve the name \emph{interleaving distance} for Defn.~\ref{def:intrinsic} because $\dgh$ is not the interleaving distance in the sense of Defn.~\ref{dfn:poset-ints}. A close relationship between the Gromov-Hausdorff distance and the interleaving distance is highlighted in \cite{bubenik2017interleaving,rolle2020stable,scoccola2020locally}.} 
\end{definition}

It directly follows from Defn.~\ref{def:intrinsic} and \ref{def:general formigram distance} that, given any two formigrams $\theta_X$ and $\theta_X'$ over the \emph{same} underlying set $X$, we have
$2\ \dgh(\theta_X,\theta_X') \leq \df(\theta_X,\theta_X').$

\section{Interleaving by parts and geodesicity} 
\label{sec:generalizations}

We \emph{decompose} the interleaving distance $\dint$ between poset maps (Sec.~\ref{sec:join decomposition of interleavings}) and harness it to show the geodesicity of $\dint$ in certain settings (Sec.~\ref{sec:geodesic}).

\subsection{Join representations of interleavings between poset maps}\label{sec:join decomposition of interleavings}

The goal of this section is to establish  Thms.~\ref{thm:gen-metric-dec} and \ref{thm:general-metric-dec}.

\paragraph{Join representations for \pe{}s.}  Recall that any poset $\PP$ contains at least one trivial join-dense subset ($\PP$ itself). In this section, we adhere to:

\begin{convention}\label{convention:irreducibly generated}
 $(\PP,\leq)$ denotes a poset and  $(\QQ,\leq)$ denotes 
 a poset  containing a zero element\footnote{If, a priori, $\QQ$ does not contain a zero element, one can simply add one to $\QQ$. Namely, replace $\QQ$ by $\QQ\cup\{0\}$ where $0$ is forced to be the smallest element in $\QQ\cup\{0\}$, by definition.} and  with a distinguished join-dense subset $B\subset\QQ$. 

\end{convention} 

\begin{proposition}\label{prop:canonical representation}Let $q\in \QQ$. Then, $q=\bigvee 
\left(B\cap [0,q]\right)$.
\end{proposition}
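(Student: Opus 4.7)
The plan is to prove the equality by sandwiching $q$ between lower and upper bounds of $B\cap [0,q]$, using only the definition of join-density and the universal property of joins.

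First I would invoke the join-density hypothesis to pick a subset $A\subset B$ such that $q=\bigvee A$. From the definition of join, every $a\in A$ satisfies $a\leq q$; since $0\leq a$ for every $a$ as $0$ is the zero element of $\QQ$, each such $a$ lies in the interval $[0,q]$. Hence $A\subset B\cap [0,q]$.

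Next I would verify that $q$ is the least upper bound of $B\cap[0,q]$. It is certainly an upper bound, since every $b\in B\cap [0,q]$ satisfies $b\leq q$ by definition of $[0,q]$. For the "least" part, suppose $s\in\QQ$ is any upper bound of $B\cap[0,q]$. Since $A\subset B\cap [0,q]$, the element $s$ is also an upper bound of $A$, and the universal property of $\bigvee A=q$ then forces $q\leq s$. This simultaneously shows that the join $\bigvee(B\cap[0,q])$ exists and equals $q$, completing the proof.

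There is no real obstacle here: the statement is a direct consequence of the definition of join-density combined with the universal property defining joins, and the only mild subtlety is to note in passing that the join $\bigvee(B\cap [0,q])$ exists (it does, since $q$ is shown to be a least upper bound).
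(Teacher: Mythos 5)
Your proof is correct and follows essentially the same route as the paper: use join-density to obtain $A\subset B$ with $\bigvee A=q$, observe $A\subset B\cap[0,q]$, and conclude via the least-upper-bound property (the paper phrases this as a sandwich $A\subset B\cap[0,q]\subset[0,q]$ with both outer joins equal to $q$). Your extra remark that this also establishes the existence of $\bigvee\left(B\cap[0,q]\right)$ is a nice touch but not a substantive difference.
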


\begin{proof} Clearly, we have $\bigvee [0,q]=q$.
As $B$ is join-dense, there exists $B'\subset B$ s.t. $\bigvee B'=q$. 
Since $B' \subset B\cap [0,q]\subset [0,q]$, we have $\bigvee\left( B\cap [0,q]\right)=q$, as desired.
\end{proof}

The following proposition is straightforward. 
\begin{proposition}
\label{prop:semilattice} 

Given any subfamily $\{\Ffunc_a:a\in A\}$ of $[\PP,\QQ]$, assume that for all $p\in \PP$, the join $\bigvee_{a\in A} \Ffunc_a(p)$ exists in $\QQ$. Then the join $\bigvee \{\Ffunc_a: a\in A\}$ exists in the poset $[\PP,\QQ]$, which is given by $p \mapsto \bigvee_{a\in A} \Ffunc_a(p)$.
\end{proposition}
Let $q\in \QQ$ and let $U\subset \PP$ be an upper set of $\PP$.
We define the upper set indicator map $\Ifunc^{U}_{q}:(\PP,\leq)\to(\QQ,\leq)$ as
\begin{equation}\label{eq:indicator}
        (\Ifunc^{U}_{q})(p):=
\begin{cases} 
      q, & p\in U \\
      0, & \text{otherwise.}  
   \end{cases}
\end{equation}

\begin{definition}\label{dfn:upper set indicators}
Let $\Ffunc \in [\PP,\QQ]$ and let $b\in B$. 
For the upper set $\Ffunc^{-1}(b^\uparrow):=\{p\in \PP: b\leq \Ffunc(p)\}$, let us define the \textbf{$b$-part} of $\Ffunc$ as $\Ffunc_b:=\Ifunc_b^{\Ffunc^{-1}(b^\uparrow)}$. 
\end{definition} 
We introduce a certain join representation of a \pe{} which is the source of many results in this paper.

\begin{proposition}[Join representation via indicator maps]\label{prop:alg-dec}
For any \pe{}
$\Ffunc:\PP\to \QQ$,
\begin{equation}\label{eq:alg-dec}
    \Ffunc=\bigvee \left\{\Ffunc_b:{b\in B} \right\}.
\end{equation}
\end{proposition}

\begin{proof}
Fix $p\in \PP$. Let us define $A_p,B_p\subset \QQ$ as $A_p:=B\cap[0,\Ffunc(p)]$ and $B_p:=\left\{\Ffunc_b(p):{b\in B} \right\}$. 
Then,
\begin{align*}
  \Ffunc(p)&=
    \bigvee A_p
    &\mbox{by Prop.~\ref{prop:canonical representation}}
    \\
    &=\bigvee \left(A_p\cup\{0\}\right)\\
   &=\bigvee \left(B_p\cup \{0\}\right) &\mbox{since $A_p\cup\{0\}=B_p\cup\{0\}$}\\
    &=\bigvee B_p\\
    &=\left(\bigvee\left\{\Ffunc_b:{b\in B} \right\}\right)(p)&\mbox{by Prop. \ref{prop:semilattice},}
\end{align*}
which proves the equality in Eqn.~(\ref{eq:alg-dec}).
\end{proof}

\begin{remark}\label{rem:about join representation of F}
\begin{enumerate}[label=(\roman*),topsep=0pt,itemsep=-1ex,partopsep=1ex,parsep=1ex]
    \item The join representation in (\ref{eq:alg-dec}) is functorial in the sense that
$\Ffunc\leq \Gfunc$ in $[\PP,\QQ]$ $\Leftrightarrow$  for all $b\in B$,\  $\Ffunc_b\leq \Gfunc_b.$  \label{item:functoriality} 
\item One can establish ``duals" of Conv.~\ref{convention:irreducibly generated}, Props.~\ref{prop:canonical representation},  \ref{prop:semilattice} and Defn.~\ref{dfn:upper set indicators} which permit representing $\Ffunc$ as a \emph{meet} of $\Ffunc$'s ``dual" $b$-parts, instead of the join representation given in (\ref{eq:alg-dec}). We have not found any significant use of this dual statement though for the purpose of this paper.
\item 
The  $\Ffunc_b$s in Eqn.~(\ref{eq:alg-dec}) are \emph{not} necessarily join-irreducible in $[\PP,\QQ]$. However, there is a special case: Let $\PP$ be a totally ordered set and assume that $B$ consists solely of join-irreducible elements of $\QQ$ (cf. Rmk.~\ref{rem:smallest join-dense}). Then, each $\Ffunc_b$ is join-irreducible and thus, in that case,  equality (\ref{eq:alg-dec}) could be regarded as a join \emph{decomposition} of $\Ffunc$.
\end{enumerate}

\end{remark}

\paragraph{Interleaving by parts.}
We show that the interleaving distance between \pe{}s admits a join representation which is compatible with the join representation in (\ref{eq:alg-dec}) . 

\begin{theorem}[Interleaving by parts]
\label{thm:gen-metric-dec}
Let $(\PP,\leq,\Omega)$ be a poset with a flow, and let $\SSS$ be a poset 
with zero and a join-dense subset $B$.
For any $\Ffunc,\Gfunc:(\PP,\leq)\to (\SSS,\leq)$, 
\begin{equation}\label{eq:gen-metric-dec}
\dint(\Ffunc,\Gfunc)=\sup_{b\in B}\dint(\Ffunc_b,\Gfunc_b).
\end{equation}
\end{theorem}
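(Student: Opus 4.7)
The plan is to prove the two inequalities separately, using the functorial join representation $\Ffunc = \bigvee_{b\in B} \Ffunc_b$ and $\Gfunc = \bigvee_{b\in B} \Gfunc_b$ furnished by Prop.~\ref{thm:alg-dec}, together with the elementary observation that pre-composition with $\Omega_\eps$ distributes over pointwise joins. Concretely, for any family $\{\Hfunc_b\}_{b\in B}$ in $[(\PP,\leq),(\SSS,\leq)]$ whose pointwise join exists (Prop.~\ref{prop:semilattice}), one has $\bigl(\bigvee_b \Hfunc_b\bigr) \cdot \Omega_\eps = \bigvee_b (\Hfunc_b \cdot \Omega_\eps)$ because evaluation at any $p\in\PP$ gives the join $\bigvee_b \Hfunc_b(\Omega_\eps(p))$ on both sides.

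For the inequality $\dint(\Ffunc,\Gfunc) \leq \sup_{b\in B}\dint(\Ffunc_b,\Gfunc_b)$, I would fix any $\eps$ strictly larger than the right-hand side and note that each pair $(\Ffunc_b,\Gfunc_b)$ is $\eps$-interleaved, so $\Ffunc_b \leq \Gfunc_b\cdot\Omega_\eps$ and $\Gfunc_b \leq \Ffunc_b\cdot\Omega_\eps$ for all $b\in B$. Taking pointwise joins over $b$ and using the distributivity remarked above yields $\Ffunc \leq \Gfunc\cdot\Omega_\eps$ and $\Gfunc \leq \Ffunc\cdot\Omega_\eps$, so $\Ffunc$ and $\Gfunc$ are $\eps$-interleaved. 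Letting $\eps$ decrease to the supremum gives the desired bound.

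For the reverse inequality, I would fix any $\eps$ strictly larger than $\dint(\Ffunc,\Gfunc)$, so that $\Ffunc \leq \Gfunc\cdot\Omega_\eps$ and $\Gfunc \leq \Ffunc\cdot\Omega_\eps$. Fix $b\in B$ and $p\in\PP$; I claim $\Ffunc_b(p) \leq \Gfunc_b(\Omega_\eps(p))$. If $p\notin \Ffunc^{-1}[b,\infty)$, then $\Ffunc_b(p)=0$ and the inequality is trivial. Otherwise $\Ffunc_b(p)=b$ and $b\leq \Ffunc(p) \leq \Gfunc(\Omega_\eps(p))$, so $\Omega_\eps(p)\in \Gfunc^{-1}[b,\infty)$ and $\Gfunc_b(\Omega_\eps(p))=b$. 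Hence $\Ffunc_b \leq \Gfunc_b\cdot\Omega_\eps$; by symmetry $\Gfunc_b \leq \Ffunc_b\cdot\Omega_\eps$. Thus $\dint(\Ffunc_b,\Gfunc_b)\leq\eps$ for every $b\in B$, and the supremum over $B$ is at most $\dint(\Ffunc,\Gfunc)$.

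I do not expect a serious obstacle. The one point to handle with care is the use of \emph{max} rather than \emph{sup} in the statement: when the supremum is not attained, the two sides of Eqn.~(\ref{eq:gen-metric-dec}) should be read as the supremum, and the argument above gives equality in that generality. The only step that uses structure beyond Defn.~\ref{dfn:poset-ints} is the case analysis on whether $p\in \Ffunc^{-1}[b,\infty)$, which is immediate from Defn.~\ref{dfn:upper set indicators}.
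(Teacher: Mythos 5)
Your proof is correct and follows essentially the same route as the paper: the paper proves, for each fixed $\eps$, the equivalence ``$\Ffunc,\Gfunc$ are $\eps$-interleaved $\Leftrightarrow$ all pairs $\Ffunc_b,\Gfunc_b$ are $\eps$-interleaved'' by combining the functorial join representation of Prop.~\ref{thm:alg-dec} with the identity $(\Ffunc\Omega_\eps)_b=\Ffunc_b\Omega_\eps$, and your two inequalities are exactly these two implications unpacked (your case analysis on $p\in\Ffunc^{-1}[b,\infty)$ is precisely that identity, and your join/distributivity step plays the role of functoriality). Your remark that the ``$\max$'' should be read as a supremum when it is not attained applies equally to the paper's argument and is not a point of divergence.
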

Note that, in the standard ordered set $(\R,\leq)$, we have $\bigvee A=\sup A$ for any $A\subset \R$. Therefore, invoking Eqn.~(\ref{eq:alg-dec}), we can rewrite Eqn.~(\ref{eq:gen-metric-dec}) as:
\[\dint\left(\bigvee_{b\in B} \Ffunc_b,\bigvee_{b\in B} \Gfunc_b\right)=\bigvee_{b\in B}\dint(\Ffunc_b,\Gfunc_b),
\]
which shows that the join operation of Eqn.~(\ref{eq:alg-dec}) commutes with $\dint$.

\begin{remark}\label{rem:interleaving by parts analogy} Eqn.~(\ref{eq:gen-metric-dec}) is strongly analogous to the well-known decomposability of the interleaving distance between persistence modules $\R\rightarrow \vect$ (Defn.~\ref{def:interleaving distance between persistence modules} in the appendix).

Assume that $B$ is finite for simplicity. Then, one can check that the RHS of (\ref{eq:gen-metric-dec}) is equal to 
\begin{equation}\label{eq:interleaving decomposition1}
    \min_{\tau:B\rightarrow B}\max_{b\in B}\dint(\Ffunc_b,\Gfunc_{\tau(b)})
\end{equation}
where the minimum is taken over all bijections $\tau:B\rightarrow B$. 
Now consider two any persistence modules $M,N:\R\rightarrow \vect$ with indecomposable direct sum decompositions $M\cong \bigoplus_{i\in I} M_i$ and $N\cong \bigoplus_{j\in J} N_j$ where $\abs{I},\abs{J}<\infty$. Let $K:=I\sqcup J$. Extend $M$ and $N$ to $K$ as follows: $M_j:=0$ for $j\in J$ and let $N_i:=0$ for $i\in I$. Then $\dint(M,N)$ is equal to the following which is in the same form as (\ref{eq:interleaving decomposition1}): \[\min_{\tau:K\rightarrow K}\max_{k\in K} \dint(M_k,N_{\tau(k)}).\]
\end{remark}

\begin{proof}[Proof of Thm. \ref{thm:gen-metric-dec}]Let $\eps\in [0,\infty)$.
\begin{align*}
          &\Ffunc,\Gfunc\text{ are }\e\text{-interleaved}\\
          \Leftrightarrow\hspace{1em}&\Ffunc\leq \Gfunc\Omega_\e\text{ and }\Gfunc\leq \Ffunc\Omega_\e.\\
          \Leftrightarrow      \hspace{1em}&\text{For any }b\in B, \text{ }\Ffunc_{b}\leq (\Gfunc\Omega_\e)_b\text{ and }\Gfunc_b\leq (\Ffunc\Omega_\e)_b &\text{  by  Rmk.~\ref{rem:about join representation of F} \ref{item:functoriality} }\\
         \stackrel{(\ast)}{\Leftrightarrow}\hspace{1em}&\text{For any } b\in B, \text{ }\Ffunc_b\leq \Gfunc_b\Omega_\e\text{ and }\Gfunc_b\leq \Ffunc_b\Omega_\e &\text{ see below}\\         \Leftrightarrow\hspace{1em}&\text{For any } b\in B, \text{ }\Ffunc_b,\Gfunc_b\text{ are }\e\text{-interleaved} & \text{by Def. \ref{def:Interleavings of persistent elements}.}
         \end{align*}
For $(\ast)$, it suffices to show that $(\Ffunc\Omega_\eps)_{b}=\Ffunc_b\Omega_\eps$. To this end, we prove $p\in (\Ffunc\Omega_{\eps})^{-1}(b^\uparrow) \Leftrightarrow \Omega_\eps(p)\in\Ffunc^{-1}(b^\uparrow)$. Indeed, for $p\in \PP$,
\[    p\in (\Ffunc\Omega_{\eps})^{-1}(b^\uparrow)\ \Leftrightarrow \ b\leq (\Ffunc\Omega_\e)(p)
    \ \Leftrightarrow \  b\leq \Ffunc(\Omega_\e(p)) 
    \ \Leftrightarrow \ \Omega_\e(p)\in \Ffunc^{-1}(b^\uparrow). 
\]
\end{proof}

\paragraph{Interleaving distance between upper set indicator maps.} We represent the RHS of (\ref{eq:gen-metric-dec}) as the interleaving distance between upper sets of $\PP$ (Defn.~\ref{def:Interleavings of upper sets}). Recall Defn~\ref{dfn:upper set indicators}. 
\begin{proposition}\label{prop:uppersets}
 Let $(\PP,\leq,\Omega)$ be a poset with flow. Let $U,V\in U(\PP)$ and let $x\in \SSS$. Then, we have:
 \[d_\mathrm{I}(\Ifunc_x^{U},\Ifunc_x^{V})=d_{\mathrm{\widehat{\Omega}}}(U,V).
 \]
 \end{proposition}
 \begin{proof}
 \begin{align*}
          &\Ifunc_x^{U},\Ifunc_x^{V}\text{ are }\e\text{-interleaved.}\\
          \Leftrightarrow\hspace{1em}&\Ifunc_x^{U}\leq \Ifunc_x^{V} \Omega_\e\text{ and }\Ifunc_x^{V}\leq \Ifunc_x^{U}\Omega_\e.\\
         \Leftrightarrow\hspace{1em}&\text{ For any }p\in \PP, \Ifunc_x^{U}(p)\leq \Ifunc_x^{V}( \Omega_\e(p))\text{ and }\Ifunc_x^{V}(p)\leq \Ifunc_x^{U}(\Omega_\e(p)).\\
  \Leftrightarrow\hspace{1em}&\text{ For any }p\in \PP, \left(p\in U\Rightarrow \Omega_\e(p)\in V\right)\text{ and }\left(p\in V\Rightarrow \Omega_\e(p)\in U\right).\\
   \Leftrightarrow\hspace{1em}&\left(\text{For any }p\in \PP: p\in U\Rightarrow \Omega_\e(p)\in V\right)\text{ and }\left(\text{For any }p\in \PP:p\in V\Rightarrow \Omega_\e(p)\in U\right).\\
 \Leftrightarrow\hspace{1em}&U\subset\widehat{\Omega}_\e(V)\text{ and } V\subset\widehat{\Omega}_\e(U)\\
   \Leftrightarrow\hspace{1em}&U,V\text{ are }\e\text{-interleaved.}
  \end{align*}
 \end{proof}
By Thm.~\ref{thm:gen-metric-dec} and Prop. \ref{prop:uppersets}, $\dint$ coincides with the $\ell^\infty$-product metric 
of $\abs{B}$ copies of $d_{\mathrm{\widehat{\Omega}}}$ on $U(\PP)$:

\begin{theorem}[Decomposition of $\dint$]
\label{thm:general-metric-dec}
Let $(\PP,\leq,\Omega)$ be a poset with a flow, and let $\SSS$ be a poset 
with zero and a join-dense subset $B$.
For any $\Ffunc,\Gfunc:(\PP,\leq)\to(\SSS,\leq)$ we have
 $$\dint(\Ffunc,\Gfunc)=\sup_{b\in B}d_{\mathrm{\widehat{\Omega}}}\big(\Ffunc^{-1}(b^\uparrow),\Gfunc^{-1}(b^\uparrow)\big).$$
 \end{theorem}
 
\begin{remark}\label{rmk:comparison with isometry theorem}
Thm.~\ref{thm:general-metric-dec} is analogous to the celebrated \textit{isometry theorem} in TDA \cite{bauer2013induced,chazal2009proximity,lesnick2015theory} in the following sense: By Thm.~\ref{thm:general-metric-dec}, we can make use of the collections $$\mbox{$B(\Ffunc):=\left\{\Ffunc^{-1}(b^\uparrow)\right\}_{b\in B}$  and  $B(\Gfunc):=\left\{\Gfunc^{-1}(b^\uparrow)\right\}_{b\in B}$},$$
for computing $\dint(\Ffunc,\Gfunc)$. Analogously, for any persistence modules $M,N:\R\rightarrow \vect$ (Defn.~\ref{dfn:persistence module}), their \textit{barcodes} (or equivalently \textit{persistence diagrams}) 
are utilized for computing the interleaving distance between $M$ and $N$ via the bottleneck distance \cite{bauer2013induced,chazal2009proximity}. \label{item:comparison with isometry theorem}
\end{remark}

\subsection{Geodesicity of interleavings between poset maps}\label{sec:geodesic}

The goal of this section is to prove that the interleaving distance between \pe{}s $\PP\rightarrow \QQ$ is geodesic \emph{when $\QQ$ is a complete lattice} (Thm.~\ref{thm:di is geodesic}). 
This proves in a uniform way that many known metrics that will be discussed in later sections are all geodesic.

In the rest of the paper, any extended pseudometric space $(M,d)$ is simply referred to as a metric space. Any $x\in M$ will be identified with the class $[x]:=\{y:d(x,y)=0\}$. Let $x,y\in M$ with $d(x,y)<\infty$. A continuous map $g:[0,1]\rightarrow M$ is called a \textbf{path} from $x$ to $y$  if 
$g(0)=x$ and $g(1)=y$. The path $g$ is called \textbf{geodesic} if
\[d(g(s),g(t))=\abs{s-t}\cdot d(x,y)\] for all $s,t\in [0,1]$. If there exists a geodesic path between every pair of points in $M$ within a finite distance, $M$ is called \textbf{geodesic}.

\begin{proposition}\label{prop:geodesic for upper sets}For any poset $(\PP,\leq,\Omega)$ with a flow, the distance $d_{\mathrm{\widehat{\Omega}}}$ on $U(\PP)$ (Defn. \ref{def:Interleavings of upper sets}) is geodesic.

\begin{proof}Let $A,B\in U(\PP)$ with $\domhat(A,B)=\rho\in (0,\infty)$. For $t\in[0,1]$, let $A_t:=\omegahat_{\rho t}(A)\cap\omegahat_{\rho(1-t)}(B)$. We claim that $t\mapsto A_t$ for $t\in [0,1]$ is a geodesic path from $A$ to $B$.\footnote{This construction is similar to the construction of Hausdorff geodesic paths given in \cite{serra1998hausdorff}.}  First, we claim $\domhat(A,A_0)=0$ (the equality $\domhat(B,B_0)=0$ is proved similarly). To this end, we show that, for \emph{any} $\delta>0$, $A_0\subset \omegahat_\delta(A)$ and $A\subset \omegahat_\delta(A_0)$.  Let $\delta>0$. By construction we have $A_0\subset A \subset \omegahat_\delta(A)$. Next, 
\begin{align*}
    A&\subset \omegahat_\delta(A)\cap \omegahat_{\delta+\rho}(B)& A\subset\omegahat_{\rho+\delta}(B)\ \mbox{since $\domhat(A,B)=\rho$}\\&\subset \omegahat_\delta(A\cap\omegahat_{\rho}(B))&\mbox{by Rmk.~\ref{rem:upper sets and omega hat properties} \ref{item:union and intersection preserved}}\\&=\omegahat_\delta(A_0)&\mbox{by definition}. 
\end{align*}
Now fix any $s<t$ in $[0,1]$ and we show that $\domhat(A_s,A_t)=(t-s)\rho$. We claim that $\domhat(A_s,A_t)\leq (t-s)\rho$. 
By Rmk.~\ref{rem:upper sets and omega hat properties} \ref{item:union and intersection preserved}, \[\omegahat_{\rho(t-s)}(A_s)=\omegahat_{\rho t}(A)\cap \omegahat_{\rho (1+t-2s)}(B)\supset \omegahat_{\rho t}(A)\cap \omegahat_{\rho (1-t)}(B)=A_t.\]
Similarly, one has $\omegahat_{\rho(t-s)}(A_t)\supset A_s$. This shows that $\domhat(A_s,A_t)\leq (t-s)\rho$.
\end{proof}

\end{proposition}

\begin{theorem}\label{thm:di is geodesic}
Let $\posetflow$ be a poset with a  flow and $(\QQ,\leq)$ is a complete lattice. The interleaving distance $\dint$ on $[\PP,\QQ]$ is geodesic.
\end{theorem}
We actually do not need the completeness of $\QQ$ with respect to meets in the assumption. However, it is automatically guaranteed under the assumption that $\QQ$ is complete with respect to joins  \cite[Thm.~3.24]{roman2008lattices}, which we need in the proof below.

\begin{proof}
Let $B$ be any join-dense subset of $\PP$. By Prop.~\ref{prop:geodesic for upper sets}, for each $b\in B$, there exists a geodesic path $g_b:[0,1]\rightarrow U(\PP)$ from $\Ffunc^{-1}(b^\uparrow)$ to $\Gfunc^{-1}(b^\uparrow)$. We  \emph{embed} these paths into $[\PP,\QQ]$ and \emph{assemble} the embedded paths via $\bigvee$: For each $t\in [0,1]$, define $\Hfunc_t:(\PP,\leq)\to(\SSS,\leq)$ by
    \[p\mapsto \bigvee_{b\in B}  \Ifunc^{g_b(t)}_{b}(p),\]
which is well-defined since $\QQ$ is a complete lattice.

Now we claim that $t\mapsto \Hfunc_t$ is a geodesic path from $\Ffunc$ to $\Gfunc$. It is clear that $\Hfunc_0=\Ffunc$ and $\Hfunc_1=\Gfunc$. For every $s,t\in [0,1]$:
\begin{align*}
    \dint(\Hfunc_s,\Hfunc_t)&=\dint \left(\bigvee_{b\in B}\Ifunc^{g_b(s)}_{b},\bigvee_{b\in B}\Ifunc^{g_b(t)}_{b}\right)&\mbox{by definition}
    \\&=\sup_{b\in B} d_{\mathrm{\widehat{\Omega}}}(g_b(s),g_b(t))&\mbox{by Thm.~\ref{thm:general-metric-dec}}
    \\&=\sup_{b\in B}\abs{s-t}\cdot d_{\mathrm{\widehat{\Omega}}}\left(\Ffunc^{-1}(b^\uparrow),\Gfunc^{-1}(b^\uparrow)\right)&\mbox{since $g_b$ is a geodesic path}\\&=\abs{s-t}\cdot \sup_{b\in B} d_{\mathrm{\widehat{\Omega}}}\left(\Ffunc^{-1}(b^\uparrow),\Gfunc^{-1}(b^\uparrow)\right)\\&=\abs{s-t}\cdot \dint (\Ffunc,\Gfunc)&\mbox{Thm.~\ref{thm:general-metric-dec}.}
\end{align*}
\end{proof}
We remark that when the target poset $\QQ$ is not a complete lattice, one may replace $\QQ$ by its \emph{Dedekind–MacNeille completion $\bar{\QQ}$ (i.e.~the smallest complete lattice containing $\QQ$) and consider the geodesic path in $[\PP,\bar{\QQ}]$.} 

\begin{remark}The construction of the geodesic path above is analogous to the construction of a geodesic path between persistence diagrams of persistence modules $\R\rightarrow\vect$  via a linear interpolation guided by an optimal matching \cite{chowdhury2019geodesics}.
In the proof above,  the collection $\{\Ffunc^{-1}(b^\uparrow):b\in B\}$ can be viewed as a proxy for the ``persistence diagram" of $\Ffunc$.
\end{remark}

\section{Consequences}\label{sec:applications}

This section describes a number of consequences of Thms.~\ref{thm:gen-metric-dec} and \ref{thm:general-metric-dec}. 

In Sec.~\ref{sec:erosion and graded} 
we establish a connection between the erosion distance \cite{patel2018generalized} and the graded rank functions \cite{betthauser2019graded}. In Sec.~\ref{sec:tripod} we show that the Gromov-Hausdorff distance can be recast within the framework of interleaving distances and thereby obtain a far reaching generalization of this distance. Therein, basic properties of this distance are established including its universality and geodesicity.
In Sec.~\ref{sec:Interleaving distance between hierarchical clusterings} and \ref{sec:Structural theorem for the formigram interleaving distance} we prove that computing the interleaving distance between multiparameter hierarchical clusterings reduces to computing  Hausdorff distances in Euclidean spaces. This establishes an equivalence between known metrics for comparing multiparameter hierarchical clusterings.
In Sec.~\ref{sec:Structural theorem for the formigram interleaving distance} we determine the computational complexity of the interleaving distance between formigrams in Defn.~\ref{def:intrinsic}.

\begin{framed}
\begin{convention}
In the rest of the paper, the posets $\R_{\geq 0}$ and $\R^n$ (for any $n\in\N$) are equipped with the flow in Eqn.~(\ref{eq:l-infinity}). Also, the poset  $\Int$ is equipped with the flow in Ex.~\ref{ex:upset of int}.
\end{convention}
\end{framed}

\subsection{Erosion distance and graded rank functions}\label{sec:erosion and graded}

The goal of this section is to establish a connection between the erosion distance and the \emph{graded rank function} \cite{betthauser2019graded} and to provide an interpretation of the join representation given in (\ref{eq:alg-dec}).

Although the erosion distance has a fairly general form \cite{kim2018generalized,patel2018generalized,puuska2017erosion}, its most basic use is for comparing two monotonically decreasing integer-valued maps as in \cite[Sec.~5]{kim2020spatiotemporal}. We restrict ourselves to this basic setting. Let $\Zop$ be the opposite poset of the nonnegative integers, i.e. $a\leq b$ in $\Zop$ $\Leftrightarrow$ $b\leq a$ in $\Zplus$.

\begin{definition}\label{def:erosion distance} Let $\posetflow$ be a poset with a flow. Given $\Ffunc,\Gfunc:\posetflow\rightarrow \Zop$, the interleaving distance $\dint(\Ffunc,\Gfunc)$ is called the ($\PP$-)\textbf{erosion distance}. 
\end{definition}

In $\Zop$, we have $\vee\{m,n\}=\min\{m,n\}$ and each element of $\Zop$ is join-irreducible. This implies that $\Zop$ itself is the unique join-dense subset of $\Zop$. By virtue of Thm.~\ref{thm:general-metric-dec}, we have

\[\dint(\Ffunc,\Gfunc)=\sup_{n\in \Zplus}d_{\widehat{\Omega}}\left(\Ffunc^{-1}[0,n],\Gfunc^{-1}[0,n]\right).\]

Recall the Hausdorff distance in $(\Int,\norm{-}_{\infty})$ 
(Ex.~\ref{ex:upset of int} and Defn.~\ref{def:Hausdorff distance}). We establish the following relationship between the erosion distance and the \emph{graded rank function} \cite{betthauser2019graded}, 
which directly follows from the above equality and Ex. \ref{ex:upset of int}.
\begin{theorem}\label{thm:rank invariant}The erosion distance between $\Ffunc,\Gfunc:(\Int,\leq)\rightarrow \Zop$ is equal to
\begin{equation}\label{eq:rank invariant}
    \dint(\Ffunc,\Gfunc)=\sup_{n\in \Zplus}\dhaus\left(\Ffunc^{-1}[0,n],\Gfunc^{-1}[0,n]\right). 
\end{equation}
\end{theorem}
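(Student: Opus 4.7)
The plan is to apply Cor.~\ref{cor:general-metric-dec} to reduce the problem to an expression in terms of interleaving distances between upper sets of $\Int$, and then to identify each such interleaving distance with a Hausdorff distance in $(\Int,\|\cdot\|_\infty)$. First I equip $\Int$ with the natural flow $\Omega_\eps(a,b):=(a-\eps,\,b+\eps)$, which satisfies the axioms of Defn.~\ref{def:flow} by direct check. I then take the target poset $\SSS:=\Zop$; since joins in $\Zop$ coincide with minima in $\Zplus$, every element is join-irreducible and $\Zop$ is a join-dense subset of itself. Moreover, for each $n\in\Zplus$ the upper set $[n,\infty)$ of $\Zop$ coincides with $\{0,1,\dots,n\}\subset\Zplus$, so Cor.~\ref{cor:general-metric-dec} yields
\[
\dint(\Ffunc,\Gfunc)\;=\;\max_{n\in\Zplus} d_{\widehat{\Omega}}\bigl(\Ffunc^{-1}[0,n],\,\Gfunc^{-1}[0,n]\bigr).
\]

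It therefore suffices to prove that for any two upper sets $U,V$ of $(\Int,\leq)$, one has $d_{\widehat{\Omega}}(U,V)=\dhaus(U,V)$ computed in $(\Int,\|\cdot\|_\infty)$. The $(\leq)$ direction is immediate: if $U\subset\widehat{\Omega}_\eps(V)$ and $V\subset\widehat{\Omega}_\eps(U)$, then for each $u\in U$ the point $\Omega_\eps(u)\in V$ lies at $\ell^\infty$-distance exactly $\eps$ from $u$, and symmetrically; hence $\dhaus(U,V)\leq\eps$. For $(\geq)$, I would fix $u=(a,b)\in U$ and select $v=(a',b')\in V$ with $\|u-v\|_\infty\leq\eps$; the inequalities $a-\eps\leq a'$ and $b'\leq b+\eps$ say precisely that $v\leq \Omega_\eps(u)$ in the partial order on $\Int$, and since $V$ is an upper set this forces $\Omega_\eps(u)\in V$, i.e.\ $u\in\widehat{\Omega}_\eps(V)$. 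The symmetric argument shows $U,V$ are $\eps$-interleaved.

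The main subtlety is the $(\geq)$ step, which genuinely uses that $U$ and $V$ are upper sets of $\Int$: without this hypothesis, merely finding some $v\in V$ near $u$ would not force the specific target $\Omega_\eps(u)$ to belong to $V$. Fortunately, $U=\Ffunc^{-1}[0,n]$ and $V=\Gfunc^{-1}[0,n]$ arise as preimages of upper sets of $\Zop$ under the order-preserving maps $\Ffunc,\Gfunc$, and hence are automatically upper sets of $\Int$, so the argument goes through cleanly.
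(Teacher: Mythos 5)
Your proposal is correct and follows essentially the same route as the paper: apply Cor.~\ref{cor:general-metric-dec} with $B=\Zop$ to reduce to $\max_{n\in\Zplus}d_{\widehat{\Omega}}\bigl(\Ffunc^{-1}[0,n],\Gfunc^{-1}[0,n]\bigr)$, and then identify $d_{\widehat{\Omega}}$ with $\dhaus$ on upper sets of $(\Int,\norm{-}_{\infty})$. Your two-direction argument is just an unpacking of the paper's one-line observation that $A^{\eps}=\widehat{\Omega}_{\eps}(A)$ for every upper set $A$ of $\Int$, using the same (intended) flow $(a,b)\mapsto(a-\eps,b+\eps)$.
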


\begin{figure}
    \centering
    \includegraphics[width=0.6\textwidth]{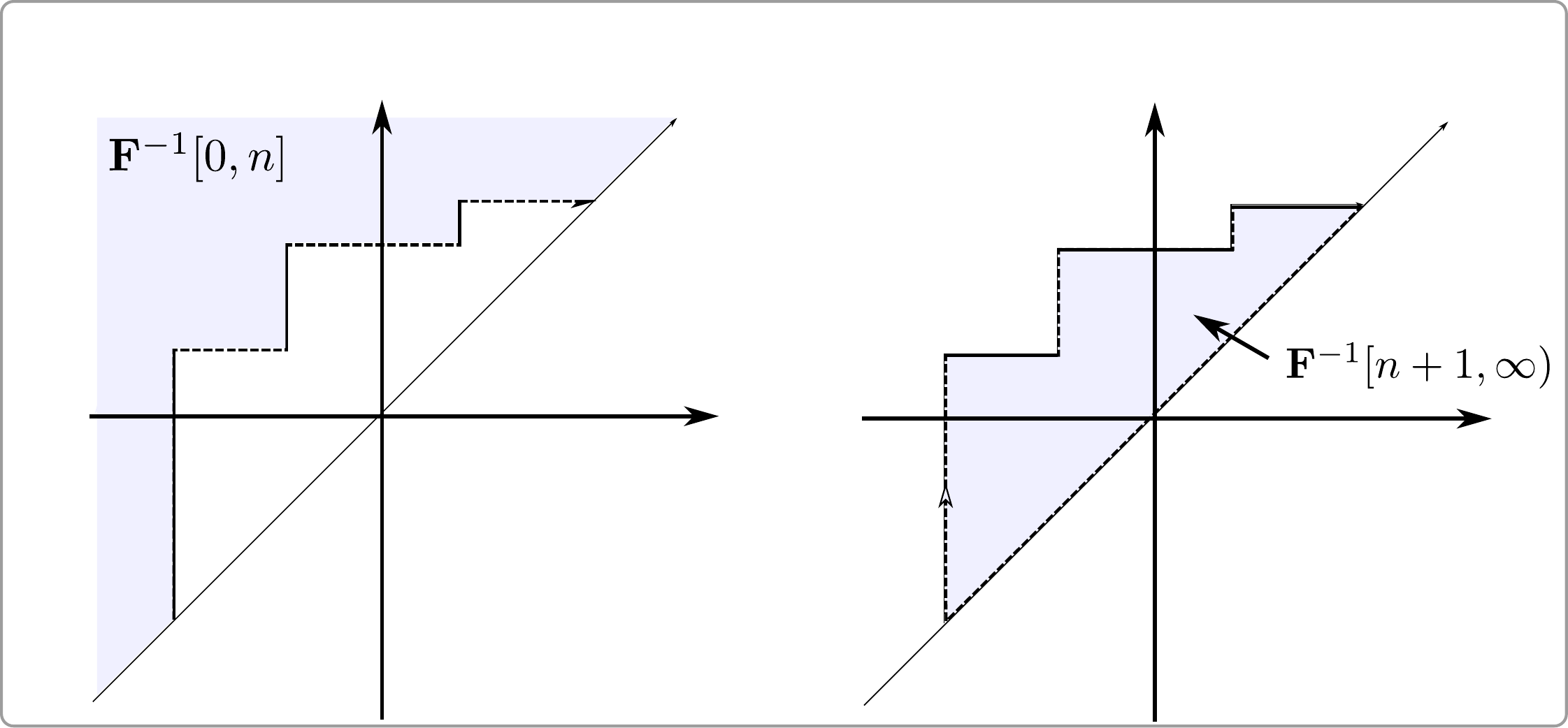}
    \caption{Illustration of $\Ffunc^{-1}[0,n]$ and  $\Ffunc^{-1}[n+1,\infty)$ for the case when $\Ffunc$ is the rank function of a generic constructible persistence module (Defn.~\ref{dfn:persistence module} and \ref{dfn:rank invariant}). The boundary line between $\Ffunc^{-1}[0,n]$ and $\Ffunc^{-1}[n+1,\infty)$ is a visual representation of the $(n+1)$-th persistence landscape. }
    \label{fig:graded_rank_inv}
\end{figure}

\begin{remark}\label{rem:erosion distance}
\begin{enumerate}[label=(\roman*),topsep=0pt,itemsep=-1ex,partopsep=1ex,parsep=1ex]
    \item If $\Ffunc$ is the \emph{rank function} of a persistence module $M:\R\rightarrow \vect$ (Defns.~\ref{dfn:persistence module} and \ref{dfn:rank invariant}), then $\Ffunc^{-1}[n+1,\infty)=\Int\setminus \Ffunc^{-1}[0,n]$ is the support of the $(n+1)$-th \emph{graded rank function} of $M$ \cite[Defn.~4.2]{betthauser2019graded} (Fig.~\ref{fig:graded_rank_inv}). In light of this, graded rank functions (or \emph{persistence landscapes} \cite{bubenik2015statistical}) are special instances of the join-representation (of rank functions) described in Prop.~\ref{prop:alg-dec}. Stability properties of graded rank functions have been discussed in \cite{betthauser2019graded}. \label{item:erosion distance1}
    \item The interleaving distance between persistence modules $M,N:\R^d\rightarrow \vect$ (Defn.~\ref{def:interleaving distance between persistence modules}) is known to be bounded from below by the erosion distance between the rank functions of $M$ and $N$; see \cite[Thm.~17]{bubenik2015statistical} \cite[Thm.~8.2]{patel2018generalized} for $d=1$ and \cite[Thm.~6.2]{kim2020spatiotemporal} for arbitrary $d$.
    \item An optimal algorithm for computing $\dint(\Ffunc,\Gfunc)$ in Eqn.~(\ref{eq:rank invariant}) and more general results are given in \cite[Sec.~5]{kim2020spatiotemporal}.
\end{enumerate}
\end{remark}

\subsection{Generalization of the Gromov-Hausdorff distance}\label{sec:tripod}

In this section we show the following:

\begin{enumerate}[label=(\roman*),topsep=0pt,itemsep=-1ex,partopsep=1ex,parsep=1ex]
    \item The Gromov-Hausdorff distance $\dt$ (between metric spaces \cite{burago2001course} or between $\R$-indexed simplicial filtrations \cite{memoli2017distance,memoli2019quantitative})   can be incorporated into the framework of interleaving distances of Sec.~\ref{sec:posets with a flow}; Thm. \ref{thm:tripod is interleaving}.\label{item:Omnepresence contribution 1} This leads to the next item.
    \item We obtain a far reaching generalization of $\dt$, which will be also denoted by $\dt$ (Defn.~\ref{def:generalized tripod distance}); although the instance of $\dt$ described in Defn.~\ref{def:generalized tripod distance} is a distance between simplicial filtrations over a poset with a flow, there is a precise sense in which it generalizes all the other Gromov-Hausdorff distances mentioned in this paper  (Rmk.~\ref{rem:ubiquity of dgh}). 
    \item This generalized $\dgh$ inherits a universal property satisfied by the original Gromov-Hausdorff distance (Thm.~\ref{thm:universality}),  of which the celebrated Vietoris-Rips filtration stability theorem \cite{chazal2009gromov,chazal2014persistence} becomes a consequence   (Thm.~\ref{thm:H_k-lower-bound} and Rmk.~\ref{rem:H_k lower bound} \ref{item:H_k lower bound_VR stability generalization}).
    \item Using Thm.~\ref{thm:di is geodesic} we show that the generalized $\dt$ is also geodesic. Interestingly, our construction of geodesic paths generalizes a known one between compact metric spaces \cite{chowdhury2018explicit} in a precise sense; see Rmk.~\ref{rem:geodesic path generalization}.
\end{enumerate}

Some basic properties of the generalized $\dgh$ are deferred to the appendix.

 Throughout this section, $X$, $Y$, and $Z$ will denote nonempty finite sets. By $\Simp(X)$, we denote the collection of \emph{abstract simplicial complexes} \cite{munkres2018elements} over a vertex set $A\subset X$ ordered by inclusion.  $\Simp(X)$ is a complete lattice whose joins are unions and meets are intersections. By $\pow(X)$, we denote the the collection of nonempty subsets of $X$ ordered by inclusion.

\begin{definition}[Simplexization]\label{def:simplexization} 
    Let us define the \pe{} $s:\pow(X)\rightarrow \Simp(X)$ as $\sigma \mapsto \pow(\sigma)$. In words, nonempty $\sigma\subset X$ is sent to the simplicial complex consisting solely of the ($\abs{\sigma}-1$)-simplex $\sigma$. 
\end{definition}

\begin{remark}\label{rmk:simp}
Let $\simp(X)$ be the image of the map $s$. The collection of all join-irreducible elements of $\Simp(X)$ equals $\simp(X)$. Hence, $\simp(X)$ is join-dense in $\Simp(X)$ (Rmk.\ref{rem:smallest join-dense}).
\end{remark} 

A \pe{} $\Ffunc_X:(\PP,\leq)\rightarrow \Simp(X)$ is said to be an ($\PP$-indexed simplicial) \textbf{filtration} (over $X$). The filtration is called \textbf{full} if there exists $p\in \PP$ such that $X\in \Ffunc_X(p)$. 

Assume that $(\PP,\leq)=(\R,\leq)$. For any $\sigma\in \pow(X)$, the \textbf{birth time of $\sigma$} is defined as $$b_{\Ffunc_X}(\sigma):=\inf\left\{t\in \R:\sigma\in \Ffunc_X(t)\right\}.$$ 
If $\sigma$ does not belong to $\Ffunc_X(t)$ for any $t\in \R$, then $b_{\Ffunc_X}(\sigma)$ is defined to be $\infty$.

\medskip
We review the Gromov-Hausdorff distance between $\R$-indexed filtrations (introduced by M\'emoli \cite{memoli2017distance} and further studied by  M\'emoli and Okutan \cite{memoli2019quantitative}):

\begin{definition}[{\cite[p.4]{memoli2017distance}}]\label{def:tripod distance} Given any $\Ffunc_X:(\R,\leq)\rightarrow \Simp(X)$ and $\Gfunc_Y:(\R,\leq)\rightarrow \Simp(Y)$, the \textbf{Gromov-Hausdorff distance} between $\Ffunc_X$ and $\Gfunc_Y$ is defined by
    \[ \dt(\Ffunc_X,\Gfunc_Y):=\frac{1}{2}\min_R \max_{\sigma\in \pow(Z)} \abs{b_{\Ffunc_X}(\varphi_X(\sigma))-b_{\Gfunc_Y}(\varphi_Y(\sigma))},
    \]
where the minimum is taken over all tripods $\tripod$.
\end{definition}

At first glance, this distance may not appear to be related to an interleaving type distance between poset maps. However, we will see that such a relation exists.

Given a surjective map $\varphi_X:Z\twoheadrightarrow X$ and a simplicial filtration $\Ffunc_X:(\PP,\leq)\rightarrow \Simp(X)$, we define the \emph{pullback} $\varphi_X^{\ast}\Ffunc_X$ of $\Ffunc_X$ via $\varphi_X$ as the filtration $(\PP,\leq)\rightarrow \Simp(Z)$ as follows. The filtration $\varphi_X^{\ast}\Ffunc_X$ sends each $p\in \PP$ to the smallest simplicial complex $K\in \Simp(Z)$ that contains \emph{all} the preimages $\varphi_X^{-1}(\tau)$ for $\tau\in \Ffunc_X(p)$.
\[\xymatrix{
 & \Simp(Z) \\
 \mathcal{P} \ar @{.>}[ur]^{\varphi_X^* {\Ffunc_X}} \ar[r]_{\Ffunc_X} & \Simp(X) \ar[u]_{\varphi_X^*} }
\]
Let $\dint^{\Simp(Z)}$ be the interleaving distance on $[\R,\Simp(Z)]$. 
By Thm.~\ref{thm:general-metric-dec} we can reformulate Defn. \ref{def:tripod distance} as follows.

\begin{theorem}\label{thm:tripod is interleaving}Given any filtrations $\Ffunc_X:(\R,\leq)\rightarrow \Simp(X)$ and $\Gfunc_Y:(\R,\leq)\rightarrow \Simp(Y)$,
\[\dt(\Ffunc_X,\Gfunc_Y)=\frac{1}{2}\min_R \dint^{\Simp(Z)}(\varphi_X^\ast\Ffunc_X,\varphi_Y^\ast\Gfunc_Y),
\] where the minimum is taken over all tripods $\tripod$.
\end{theorem}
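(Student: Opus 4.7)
The plan is: for each fixed tripod $R$ of the form $X\xtwoheadleftarrow{\varphi_X} Z\xtwoheadrightarrow{\varphi_Y} Y$, prove the identity
\[\dint^{\Simp(Z)}(\varphi_X^\ast\Ffunc_X,\varphi_Y^\ast\Gfunc_Y)=\max_{\sigma\in\pow(Z)\setminus\{\emptyset\}}\abs{b_{\Ffunc_X}(\varphi_X(\sigma))-b_{\Gfunc_Y}(\varphi_Y(\sigma))}.\]
Once this is established for each $R$, minimizing both sides over tripods yields the theorem, since the right-hand side of the display is exactly the expression being minimized in Defn.~\ref{def:tripod distance}.

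The first step is to apply the interleaving-by-parts theorem (Thm.~\ref{thm:gen-metric-dec}) to $\varphi_X^\ast\Ffunc_X,\varphi_Y^\ast\Gfunc_Y:(\R,\leq)\rightarrow\Simp(Z)$, where $(\R,\leq)$ is equipped with the translation flow of Eq.~(\ref{eq:l-infinity}) with $n=1$. By Rmk.~\ref{rmk:simp}, the subset
$\simp(Z)=\{\pow(\sigma):\sigma\in\pow(Z)\setminus\{\emptyset\}\}$
is join-dense in $\Simp(Z)$, so choosing it as the distinguished join-dense set $B$ gives
\[\dint(\varphi_X^\ast\Ffunc_X,\varphi_Y^\ast\Gfunc_Y)=\max_{\sigma\in\pow(Z)\setminus\{\emptyset\}}\dint\big((\varphi_X^\ast\Ffunc_X)_{\pow(\sigma)},(\varphi_Y^\ast\Gfunc_Y)_{\pow(\sigma)}\big).\]

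The second step is to identify the $\pow(\sigma)$-parts. Unwinding the defining property of $\varphi_X^\ast\Ffunc_X(t)$ as the smallest simplicial complex over $Z$ that contains $\{\varphi_X^{-1}(\tau):\tau\in\Ffunc_X(t)\}$, and using that $\Ffunc_X(t)$ is closed under faces, one verifies the key equivalence
\[\sigma\in\varphi_X^\ast\Ffunc_X(t)\ \Longleftrightarrow\ \varphi_X(\sigma)\in\Ffunc_X(t).\]
Hence $\pow(\sigma)\leq\varphi_X^\ast\Ffunc_X(t)$ iff $\varphi_X(\sigma)\in\Ffunc_X(t)$, and so the upper set $(\varphi_X^\ast\Ffunc_X)^{-1}[\pow(\sigma),\infty)\subset\R$ coincides, up to endpoint, with $[b_{\Ffunc_X}(\varphi_X(\sigma)),\infty)$; an analogous identification holds on the $Y$-side.

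Finally, I would invoke Thm.~\ref{thm:uppersets} for the translation flow $\widehat{\Omega}$ on $\R$: each $\pow(\sigma)$-part interleaving distance equals the $\widehat{\Omega}$-interleaving distance between the associated upper sets of $\R$, and for two upper sets of $\R$ with respective infima $a$ and $c$ this equals $|a-c|$. This gives
\[\dint\big((\varphi_X^\ast\Ffunc_X)_{\pow(\sigma)},(\varphi_Y^\ast\Gfunc_Y)_{\pow(\sigma)}\big)=\abs{b_{\Ffunc_X}(\varphi_X(\sigma))-b_{\Gfunc_Y}(\varphi_Y(\sigma))},\]
and substituting into the first display closes the argument. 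I expect the main obstacle to be the bookkeeping in the second step: one must carefully extract the equivalence $\sigma\in\varphi_X^\ast\Ffunc_X(t)\Leftrightarrow\varphi_X(\sigma)\in\Ffunc_X(t)$ from the definition of the simplicial pullback, together with the minor nuisance of whether the birth-time infima are attained (irrelevant for the interleaving distance but easy to mishandle). Every other piece is a direct application of the machinery already developed in Sec.~\ref{sec:generalizations}.
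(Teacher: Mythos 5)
Your proposal is correct and follows essentially the same route as the paper: the paper fixes a tripod, applies Cor.~\ref{cor:general-metric-dec} (which packages your two steps, Thm.~\ref{thm:gen-metric-dec} plus Thm.~\ref{thm:uppersets}) with the join-dense set $\simp(Z)$ from Rmk.~\ref{rmk:simp}, identifies each upper set $(\varphi_X^\ast\Ffunc_X)^{-1}[\pow(\sigma),\infty)$ with the half-line $[b_{\Ffunc_X}(\varphi_X(\sigma)),\infty)$, and reads off the Hausdorff distance in $\R$ as the difference of birth times before minimizing over tripods. Your explicit verification of the equivalence $\sigma\in\varphi_X^\ast\Ffunc_X(t)\Leftrightarrow\varphi_X(\sigma)\in\Ffunc_X(t)$ just spells out what the paper attributes to the bijection $\sigma\mapsto\pow(\sigma)$.
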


\begin{proof} Let $\dhaus$ be the Hausdorff distance in $\R$ (Defn. \ref{def:Hausdorff distance}). We have:
\begin{align*}
    \dint^{\Simp(Z)}(\varphi_X^\ast\Ffunc_X,\varphi_Y^\ast\Gfunc_Y)&=\max_{K\in \simp(Z)}\dhaus\left((\varphi_X^\ast\Ffunc_X)^{-1}(K^\uparrow),(\varphi_Y^\ast\Gfunc_Y)^{-1}(K^\uparrow)\right)&\mbox{by  Thm.~\ref{thm:general-metric-dec} and Rmk.~\ref{rmk:simp}}\\
    &\stackrel{(\ast)}{=}\max_{\sigma\in \pow(Z)}\dhaus\big([b_{\Ffunc_X}(\varphi_X(\sigma)),\infty),[b_{\Gfunc_Y}(\varphi_Y(\sigma)),\infty)\big)&\mbox{see below}
    \\ &=\max_{\sigma\in \pow(Z)}\abs{b_{\Ffunc_X}(\varphi_X(\sigma))-b_{\Gfunc_Y}(\varphi_Y(\sigma))},
\end{align*}
where $(\ast)$ follows from the bijection $s:\pow(X)\rightarrow \simp(X)$ (Defn. \ref{def:simplexization} and Rmk. \ref{rmk:simp}). The desired equality directly follows.
\end{proof}

In the rest of this section we fix a poset with a flow $\PPPP$. In  light of Thm.~\ref{thm:general-metric-dec} and \ref{thm:tripod is interleaving}, we obtain the following generalization of $\dt$ which we still denote by the same symbol:

\begin{definition}\label{def:generalized tripod distance} Given any filtrations $\Ffunc_X:\PPP\rightarrow \Simp(X)$ and $\Gfunc_Y:\PPP\rightarrow \Simp(Y
)$, the \textbf{generalized Gromov-Hausdorff distance} between them is defined by:
\[\dt(\Ffunc_X,\Gfunc_Y):=\frac{1}{2}\min_{R}\dint^{\Simp(Z)}\left(\varphi_X^{\ast}\Ffunc_X,\varphi_Y^\ast\Gfunc_Y\right)=\frac{1}{2}\min_R\max_{\sigma\in \pow(Z)}
d_{\widehat{\Omega}}\left((\varphi_X^{\ast}\Ffunc_X)^{-1}(\sigma^\uparrow),(\varphi_Y^{\ast}\Gfunc_Y)^{-1}(\sigma^\uparrow)\right), 
\]
where the minimum is taken over all tripods $\tripod$.

\end{definition}

That $\dt$ above is an extended pseudometric is shown in the appendix  (Prop.~\ref{prop:dt is an extended pseudometric}). We remark that (1) a sufficient condition for $\dt(\Ffunc_X,\Gfunc_Y)$ to be finite is that $\Ffunc_X$ and $\Gfunc_Y$ are full, and $\domhat$ is finite for every pair of upper sets in $\PP$. (2) When $\PP=\R^n$, $d_{\widehat{\Omega}}$ above boils down to the Hausdorff distance between upper sets in $\R^n$. 

\begin{remark}\label{rem:ubiquity of dgh}
All instances of $\dgh$ mentioned in this paper can be viewed as special instances of $\dt$ in Defn.~\ref{def:generalized tripod distance}; see Prop.~\ref{prop:omnipresence of dt} in the appendix for the precise statement.
\end{remark}

\paragraph{Universal property of $\dgh$}. Let $\mathrm{Met}$ be the space of finite pseudometric spaces. It is known that the Gromov-Hausdorff distance $\dgh^{\mathrm{Met}}$ between finite pseudometric spaces is the \emph{largest} distance $D$ 
satisfying the following two conditions: (i) If there is a surjection $\varphi:(Z,d_Z)\twoheadrightarrow (X,d_X)$ such that $d_Z(z,z')=d_X(\varphi(z),\varphi(z'))$ for all $z,z'\in Z$, then $D((X,d_X),(Z,d_Z))=0$. (ii) For any two metrics $d_1,d_2$ on a set $X$, we have: \[2\cdot D((X,d_1),(X,d_2))\leq \max_{x,x'\in X}\abs{d_1(x,x')-d_2(x,x')}.\] (see \cite[Theorem F, p.11]{scoccola2020locally} for a more general statement). We extend this universal property to:

\begin{theorem}[Universality]\label{thm:universality} Let $\dgh$ the one in Defn.~\ref{def:generalized tripod distance}. Let $D$ be any metric on $\PP$-indexed finite simplicial filtrations with the following properties.
\begin{enumerate}[label=(\roman*),topsep=0pt,itemsep=-1ex,partopsep=1ex,parsep=1ex]
    \item Given any $\Ffunc_X:\PPP\rightarrow \Simp(X)$ and a surjection $\varphi_X:Z\twoheadrightarrow X$, we have $D(\Ffunc_X,\varphi_X^\ast\Ffunc_X)=0$.\label{item:universal property condition 1}
    \item Given any $\Ffunc_X,\Gfunc_X:\PPP\rightarrow \Simp(X)$, we have $2\cdot D(\Ffunc_X,\Gfunc_X)\leq \dint^{\Simp(X)}(\Ffunc_X,\Gfunc_X)$.\label{item:universal property condition 2}
\end{enumerate}
Then, $D\leq \dgh$.
\end{theorem}
In this theorem, by restricting $\dgh$ to the \emph{Vietoris-Rips filtrations} of finite pseudo metric spaces, this proposition boils down to the aforementioned universal property of $\dgh^{\mathrm{Met}}$; this is a consequence of the equality in Prop.~\ref{prop:omnipresence of dt} \ref{item:dgh=dt} of the appendix. 

Our proof of Thm.~\ref{thm:universality} is similar to the one in \cite{bauer2020reeb}.
\begin{proof}[Proof of Thm.~\ref{thm:universality}]
Given any two filtrations $\Ffunc_X:\PPP\rightarrow \Simp(X)$ and $\Gfunc_Y:\PPP\rightarrow \Simp(Y)$, pick any tripod $\tripod$. Then, by invoking the two conditions in order, we have:
\[D(\Ffunc_X,\Gfunc_Y)= D(\varphi_X^\ast\Ffunc_X,\varphi_Y^\ast \Gfunc_Y)\leq \dint^{\Simp(Z)}(\varphi_X^\ast\Ffunc_X,\varphi_Y^\ast \Gfunc_Y).\]
Since $R$ is arbitrary, we have $D(\Ffunc_X,\Gfunc_Y)\leq 2\cdot\dgh(\Ffunc_X,\Gfunc_Y)$.
\end{proof}

Now we generalize the fact that $\dgh^{\mathrm{Met}}$ is geodesic \cite{chowdhury2018explicit,memoli2017distance}, \cite[Section 6.8]{scoccola2020locally}: 
\begin{theorem}\label{thm:dt is geodesic}
The generalized Gromov-Hausdorff distance is geodesic.

\end{theorem}
\begin{proof}
Let $\Ffunc_X:\PPP\to  \Simp(X)$ and $\Ffunc_Y:\PPP\to\Simp(Y)$ be any two filtrations with $\dt(\Ffunc_X,\Ffunc_Y)=:\rho<\infty$. Let us take any tripod minimizer $\tripod$
of $\dt(\Ffunc_X,\Ffunc_Y)$.
The existence of $R$ is guaranteed by the fact that $X$ and $Y$ are finite. Note that $\dt(\Ffunc_X,\varphi_X^* \Ffunc_X)=0$ by taking the tripod  $X\xtwoheadleftarrow{\varphi_X} Z\xtwoheadrightarrow{\mathbf{id}_Z}Z$. Similarly, $\dt(\Ffunc_Y,\varphi_Y^* \Ffunc_Y)=0$. Hence, $2\rho=\dt(\varphi_X^* \Ffunc_X,\varphi_Y^* \Ffunc_Y)$ and it suffices to construct a geodesic path between $\varphi_X^*\Ffunc_X$ and $\varphi_Y^* \Ffunc_Y$ with respect to $d_{\mathrm{I}}^{\Simp(Z)}$. Invoking that $\Simp(Z)$ is a complete lattice, a geodesic path between $\varphi_X^* \Ffunc_X$ and $\varphi_Y^* \Ffunc_Y$ exists in $[\PPP,\Simp(Z)]$, as desired.
\end{proof}

By $\Simp$ we denote the category of abstract simplicial complexes and simplicial maps. By $\Simp^{\PP}$, we denote the category of $\PP$-indexed simplicial filtrations (i.e.~functors $\PP\rightarrow \Simp$) and natural transformations between them.

\begin{remark}[Generalization]\label{rem:geodesic path generalization}
The construction of a geodesic path in the proof above 
generalizes two  constructions of geodesic paths existing in the literature: A geodesic path $g:[0,1]\rightarrow (\mathrm{Met},\dgh)$ between any $(X,d_X)$ and $(Y,d_Y)$ was constructed in \cite{chowdhury2018explicit}. The corresponding path $\VR(g):[0,1]\rightarrow (\Simp^{\R},\dt)$ coincides with the geodesic between $\VR(X,d_X)$ and $\VR(Y,d_Y)$ given in \cite{memoli2017distance}. Both are special cases of the construction described in the proof above.
\end{remark}

For $k\in \Zplus$, $\Hrm_k$ will denote the $k$-th simplicial homology with coefficients in a field $\F$ \cite{munkres2018elements}. 
Given an arbitrary category $\C$, the interleaving distance $\dint^{\C}$ between two functors $\PPP\rightarrow \C$ is recalled in Defn.~\ref{def:interleaving distance between persistence modules} of the appendix. Recall that $\vect$ denotes the category of finite dimensional vector spaces over a field $\F$. We have:
\begin{theorem} 
\label{thm:H_k-lower-bound}
Given any two filtrations $\Ffunc_X:\PPP\rightarrow \Simp(X)$ and $\Gfunc_Y:\PPP\rightarrow \Simp(Y
)$, for any $k\in \Zplus$, we have: 
\begin{equation}\label{eq:PH interleaving}
    \dint^{\vect}(\Hrm_k(\Ffunc_X),\Hrm_k(\Gfunc_Y))\leq 2\ \dt(\Ffunc_X,\Gfunc_Y).
\end{equation}
\end{theorem}

\begin{proof} Consider the metric $D:=\frac{1}{2}\cdot\dint^\vect(\Hrm_k(-),\Hrm_k(-))$ on $\Simp^{\PP}$. Quillen’s Theorem A \cite{quillen1973higher} implies that, given any surjection $\varphi:Z \twoheadrightarrow X$, we have the homotopy equivalence $\Ffunc_X\simeq\varphi^*\Ffunc_X$. This implies $\Hrm_k(\varphi^{\ast}\Ffunc_X)\cong\Hrm_k(\Ffunc_X)$ and in turn that $D$ satisfies condition \ref{item:universal property condition 1} in Thm.~\ref{thm:universality}.  Functoriality of $\Hrm_k$ guarantees condition \ref{item:universal property condition 2} in Thm.~\ref{thm:universality}. Now the claim follows from Thm.~\ref{thm:universality}.  
\end{proof}

\begin{remark}\label{rem:H_k lower bound}
\begin{enumerate}[label=(\roman*),topsep=0pt,itemsep=-1ex,partopsep=1ex,parsep=1ex]
    \item The theorem above subsumes \cite[Thm.~4.2]{memoli2017distance} which addresses the case $\PP=\R$. Our proof relying on the universality of $\dgh$ gives an alternative proof of \cite[Thm.~4.2]{memoli2017distance}.
    \item \label{item:H_k lower bound_VR stability generalization} When $\Ffunc_X$ and $\Gfunc_Y$ are the (spatiotemporal) Vietoris-Rips filtrations of (dynamic) metric spaces on $X$ and $Y$, the inequality in (\ref{eq:PH interleaving}) coincides with the (spatiotemporal) Vietories-Rips filtration stability theorems \cite{chazal2009gromov,chazal2014persistence,kim2020spatiotemporal}. This is a corollary of Prop.~\ref{prop:omnipresence of dt} \ref{item:dgh=dt} and \ref{item:dgh=dt DMSs}.
    \item The distance $\dt(-,-)$ is more discriminative than $\max_{k\in \Zplus}\dint^{\vect}(\Hrm_k(-),\Hrm_k(-))$ as can be seen through the following example.
\end{enumerate}
\end{remark}

\begin{example}\label{ex:filtrations}
Let $X=\{x_1\}$ and let $Y=\{y_1,y_2\}$. Define $\Ffunc_X:\R\rightarrow \Simp(X)$ and $\Ffunc_Y:\R\rightarrow \Simp(Y)$  by $\Ffunc_X(t)=\begin{cases}
      x_1,&t\in [0,\infty)\\ \emptyset,&\mbox{otherwise,}
\end{cases}$ 
and $\Ffunc_Y(t):=\begin{cases}
      \left\{\{y_1\}\right\},&t\in [0,1)\\ \left\{\{y_1\},\{y_2\},\{y_1,y_2\}\right\},&t\in [1,\infty)\\\emptyset,&\mbox{otherwise},
\end{cases}$ respectively. Note that $\displaystyle\max_{k\in \Zplus}\dint^{\vect}\left(\Hrm_k(\Ffunc_X),\Hrm_k(\Ffunc_{Y})\right)=0$, but $\dt\left(\Ffunc_X,\Ffunc_Y\right)=1$. Noting that $\Ffunc_X$ and $\Ffunc_Y$ are homotopy equivalent, this example shows that $\dt$ between homotopy equivalent filtrations can be strictly positive. 

We further remark that $\dgh$ between $\R$-indexed simplicial filtrations can be arbitrarily larger than the homotopy interleaving distance \cite{blumberg2017universality}.
e.g. when comparing the Vietoris-Rips filtrations of metric spaces that are (almost) homotopy equivalent but are far from being isometric, such as a pair of a circle and a circle with long flares, cf. \cite[Fig.~1]{memoli2019quantitative}.
\end{example}

In the appendix we provide other upper and lower bounds for $\dt$; see Sec.~\ref{sec:on the tripod distance}.

\subsection{Hierarchical clusterings over posets} \label{sec:Interleaving distance between hierarchical clusterings}

Let $X$ be a nonempty finite set. A \textbf{$\poset$-indexed hierarchical clustering} (over $X$) is any \pe{} $\poset\rightarrow \subpart(X)$. Standard examples include the case of $\PP=\R^n$ with the product order, often referred to as \emph{hierarchical clustering} (when $n=1$) or \emph{multiparameter hierarchical clustering} (when $n\geq 2$) \cite{cai2020elder,carlsson2010characterization,carlsson2010multiparameter,kim2020spatiotemporal,rolle2020stable,smith2016hierarchical}. From Rmk.~\ref{rem:about subpart(X)} \ref{item:about subpart(X)1}, \ref{item:about subpart(X)2}, and Prop.~\ref{prop:join representation} \ref{item:join representation1} recall that $\subpart(X)$ is $\bigvee$-irreducibly generated and the join-irreducible elements of $\subpart(X)$ are either singleton or doubleton blocks of $X$. 

Recall that given any point $q$ in a poset $\QQ$,  $q^\uparrow:=\left\{r\in \PP: q\leq r\right\}$. Invoking Thm.~\ref{thm:general-metric-dec}, we have:

\begin{theorem}\label{thm:hierarchical clustering}
 Let $\PPPP$ be a poset with a flow. For any $\Ffunc,\Gfunc:\PPPP \rightarrow \subpart(X)$,

\begin{equation}\label{eq:interleaving beween multiclustering}
\dint\left(\Ffunc,\Gfunc\right)=\max_{x,x'\in X}d_{\widehat{\Omega}}\left(\Ffunc^{-1}\left(\left\{x,x'\right\}^\uparrow\right),\Gfunc^{-1}\left(\left\{x,x'\right\}^\uparrow\right)\right).    
\end{equation}
(Note: when $x=x'$, the set $\left\{x,x'\right\}$ is the singleton $\left\{x\right\}$). 
\end{theorem}
If $\PP=\R^n$ (resp. $\Int$), the equality above implies that computing the interleaving distance between multiparameter hierarchical clusterings into computing the Hausdorff distance between upper sets of $\R^n$ (resp. $\Int$) a finite number of times. In the next section, we discuss the computational complexity of the RHS when $\PP=\Int$ (Thm.~\ref{thm:df complexity}).  Also we will discuss the comparison of two $\PPP$-indexed hierarchical clusterings \emph{over different underlying sets}.

\subsection{Computing distances between formigrams }\label{sec:Structural theorem for the formigram interleaving distance}\label{ssec:metric-dec}

In this section we elucidate the structure of the two distances $\df$ and $\dgh$ between formigrams introduced in Sec.~\ref{sec:formigrams} (Thms.~\ref{thm:metric-dec-formi} and \ref{thm:structmax}). Thereby we find equivalences between several known metrics for comparing hierarchical clusterings (Rmk.~\ref{rem:relating to GH} \ref{item:relating to GH1}). Also, we clarify the computational costs of $\df$, $\dgh$ and other related metrics (Rmk.~\ref{rem:relating to GH} \ref{item:relating to GH2} and Thm.~\ref{thm:df complexity}). $\dgh$ between formigrams will be extended to a distance between any poset-indexed hierarchical clusterings (Defn.~\ref{def:dGH between HC over P}).

\paragraph{Formigrams can be viewed as \pe{}s.}

The poset $\Int$ in Ex.~\ref{ex:upset of int} is isomorphic to the poset of nonempty closed intervals of $\R$ whose partial order is inclusion. 
Each $(a,b)\in \Int$ will be identified with the closed interval $[a,b]\subset \R$. Let us fix a nonempty finite set $X$. 
A \pe{} $(\Int,\leq)\to(\subpart(X),\leq)$ will be simply denoted by $\Int\to\subpart(X)$. Any formigram $\theta$ over $X$ induces a map $\Int\to\subpart(X)$:

\begin{definition}
\label{dfn:theta hat}
For $\theta\in \Formi(X)$, define $\widehat{\theta}:\Int\rightarrow\subpart(X)$ as $I\mapsto \bigvee_{s\in I}\theta(s).$ 
\end{definition}
See Fig.~\ref{fig:precosheaf} (A) and (B) for an illustrative example of Defn.~\ref{dfn:theta hat}.

Recall the flow $\Omega$ on $\Int$ in Eqn.~(\ref{eq:Int flow}). Defn.~\ref{def:Interleavings of persistent elements} can be specialized as follows:
Given $\alpha,\alpha':\Int\to\subpart(X)$ 
\[\dfhat\left(\alpha,\alpha'\right):=\inf\left\{\e\in [0,\infty): \alpha,\alpha'\text{ are }\e\text{-interleaved w.r.t. $-\cdot\Omega$}\right\}.
\]
It is not difficult to check that $\df$ in Defn.~\ref{def:intrinsic} coincides with $\dfhat$:

\begin{proposition}[{\cite[Defn.~4.11]{kim2017stable}}]
\label{thm:metric-embed}
For any $\theta,\theta'\in \Formi(X)$, we have:
$\df\left(\theta,\theta'\right)=\dfhat\left(\widehat{\theta},\widehat{\theta'}\right).$
\end{proposition}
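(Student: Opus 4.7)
The plan is to establish the two inequalities $\dfhat(\widehat\theta,\widehat{\theta'})\le \df(\theta,\theta')$ and $\df(\theta,\theta')\le\dfhat(\widehat\theta,\widehat{\theta'})$ separately, each by transferring an $\e$-interleaving from one setting to the other.

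For the inequality $\dfhat(\widehat\theta,\widehat{\theta'})\le \df(\theta,\theta')$, suppose $\theta,\theta'$ are $\e$-interleaved w.r.t.\ the flow $\Sfunc$ of Defn.~\ref{def:smoothing}. Fix $(a,b)\in\Int$. For every $s\in(a,b)$, the hypothesis yields $\theta(s)\le \Sfunc_\e(\theta')(s)=\bigvee_{s'\in[s-\e,s+\e]}\theta'(s')$. Since $s\in(a,b)$ forces $[s-\e,s+\e]\subset(a-\e,b+\e)$, each $\theta'(s')$ on the right is bounded above by $\widehat{\theta'}(a-\e,b+\e)$. Joining over $s\in(a,b)$ and invoking Defn.~\ref{dfn:theta hat} gives $\widehat\theta(a,b)\le\widehat{\theta'}(\Omega_\e(a,b))$ with $\Omega$ the flow from Eqn.~(\ref{eq:Int flow}). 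A symmetric argument shows that $\widehat\theta$ and $\widehat{\theta'}$ are $\e$-interleaved w.r.t.\ $-\cdot\Omega$, whence $\dfhat(\widehat\theta,\widehat{\theta'})\le\e$.

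For the reverse inequality, assume $\widehat\theta,\widehat{\theta'}$ are $\e$-interleaved. The key is to produce, for each $t\in\R$, a $\delta_0>0$ such that for every $\delta\in(0,\delta_0)$ both identities
\[
\widehat\theta(t-\delta,t+\delta)=\theta(t),\qquad
\widehat{\theta'}(t-\delta-\e,t+\delta+\e)=\Sfunc_\e(\theta')(t)
\]
hold. The first follows by choosing $\delta$ smaller than the distance from $t$ to every other critical point of $\theta$: then $\theta$ is constant on $(t-\delta,t)\cup(t,t+\delta)$, and together with the local maximality of $\theta$ at $t$ (in case $t\in\crit(\theta)$) and the fact that $t\in(t-\delta,t+\delta)$, this forces the equality. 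For the second identity, the inequality $\ge$ is immediate since $(t-\delta-\e,t+\delta+\e)\supset[t-\e,t+\e]$, while the inequality $\le$ is obtained by further shrinking $\delta$ so that, on the annulus $(t-\delta-\e,t-\e)\cup(t+\e,t+\delta+\e)$, the values of $\theta'$ are dominated by $\theta'(t-\e)$ or $\theta'(t+\e)$ --- a conclusion reached by case-splitting on whether the endpoint $t\pm\e$ is a critical point of $\theta'$ (constancy between critical points in one case, local maximality at the critical endpoint in the other). Combining the two identities with the interleaving inequality $\widehat\theta(t-\delta,t+\delta)\le\widehat{\theta'}(t-\delta-\e,t+\delta+\e)$ then produces $\theta(t)\le\Sfunc_\e(\theta')(t)$ for every $t$, and the symmetric inequality is handled identically, giving $\df(\theta,\theta')\le\e$.

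The main obstacle is the second of the two displayed identities. Intuitively it says that the cosheaf $\widehat{\theta'}$ is continuous from above at the closed interval $[t-\e,t+\e]$, but formally it requires the endpoint case analysis sketched above. The conditions (i) and (ii) in Defn.~\ref{dfn:formi} were however designed precisely so that these local arguments succeed, so the whole step reduces to choosing $\delta$ smaller than the gap between $\{t,t-\e,t+\e\}$ and the rest of $\crit(\theta)\cup\crit(\theta')$.
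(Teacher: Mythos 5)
Your proof is correct, and while the first inequality ($\dfhat(\widehat{\theta},\widehat{\theta'})\leq\df(\theta,\theta')$) is essentially identical to the paper's --- both rewrite the iterated join $\bigvee_{t\in I}\bigvee_{s\in[t]^\e}\theta'(s)$ as a join over the $\e$-thickened interval --- your argument for the reverse inequality takes a genuinely different route. The paper sidesteps the ``continuity from above'' issue entirely: it fixes an auxiliary $\zeta>0$, uses the trivial bound $\theta(t)\leq\widehat{\theta}\bigl((t-\zeta,t+\zeta)\bigr)$, applies an $(\e+\zeta)$-interleaving of the hatted maps, and lands inside $\Sfunc_{\e+2\zeta}(\theta')(t)$; since $\zeta$ is arbitrary and $\df$ is an infimum, $\df(\theta,\theta')\leq\e$ follows without ever needing $\widehat{\theta'}$ of a shrinking open interval to stabilize at the closed-interval join. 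You instead prove the exact local identities $\widehat{\theta}(t-\delta,t+\delta)=\theta(t)$ and $\widehat{\theta'}(t-\e-\delta,t+\e+\delta)=\Sfunc_\e(\theta')(t)$ for small $\delta$, which does require the endpoint case analysis you describe (constancy between critical points versus local maximality at a critical point), but it is sound: finiteness of $\crit(\theta')$ and condition~(\ref{eq:comparability}) give exactly the domination $\theta'(s)\leq\theta'(t\pm\e)$ on the small annuli. Your version buys a strictly stronger conclusion --- that $\theta,\theta'$ are $\e$-interleaved if and only if $\widehat{\theta},\widehat{\theta'}$ are, so the two interleaving relations coincide parameter by parameter, not merely in their infima --- at the cost of the case analysis that the paper's $\zeta$-perturbation trick avoids. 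Both arguments correctly conclude by taking the infimum over admissible $\e$, so neither needs the infimum defining $\dfhat$ to be attained.
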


In view of Defn. \ref{dfn:theta hat} and Prop. \ref{thm:metric-embed}, in what follows, any formigram over $X$ will be identified with a  poset map $\Int\rightarrow \subpart(X)$ and $\df$ will be identified with $\dfhat$.

\paragraph{$\df$ via interleaving by parts.}
Let $\dhaus$ be the \emph{Hausdorff distance} (Defn.~\ref{def:Hausdorff distance}) in $(\Int,\norm{-}_\infty)$ (Ex.~\ref{ex:upset of int}). 
As a corollary of Thm.~\ref{thm:hierarchical clustering}, we have:
\begin{theorem}\label{thm:metric-dec-formi}
For any two formigrams $\theta$ and $\theta'$ over $X$,
\begin{equation}\label{eq:structure thm}
     \df\left(\theta,\theta'\right)=\max_{x,x'\in X}\dhaus\left(\theta^{-1}\left(\left\{x,x'\right\}^{\uparrow}\right),\theta'^{-1}\left(\left\{x,x'\right\}^{\uparrow}\right)\right).
\end{equation}
\end{theorem}

We utilize Thm.~\ref{thm:metric-dec-formi} for elucidating both the  computational complexity of $\df$  (Prop. \ref{prop:complexity of df}). 
Inspired by Thm. \ref{thm:metric-dec-formi} we define: 
\begin{definition}\label{dfn:upcode}
 For $\theta\in \Formi(X)$, we call $B(\theta):=\left\{\theta^{-1}\left(\left\{x,x'\right\}^{\uparrow}\right)\right\}_{x,x'\in X}$
the \textbf{\upcode{}} of $\theta$ (see Fig.~\ref{fig:precosheaf} for an example). 
\end{definition}

See Fig. \ref{fig:e-shifted-formi} and \ref{fig:multiple-pre-cosheaves} for an illustrative example of 
an application of Thm. \ref{thm:metric-dec-formi}. 
\begin{figure}
    \centering
\includegraphics[width=\textwidth]{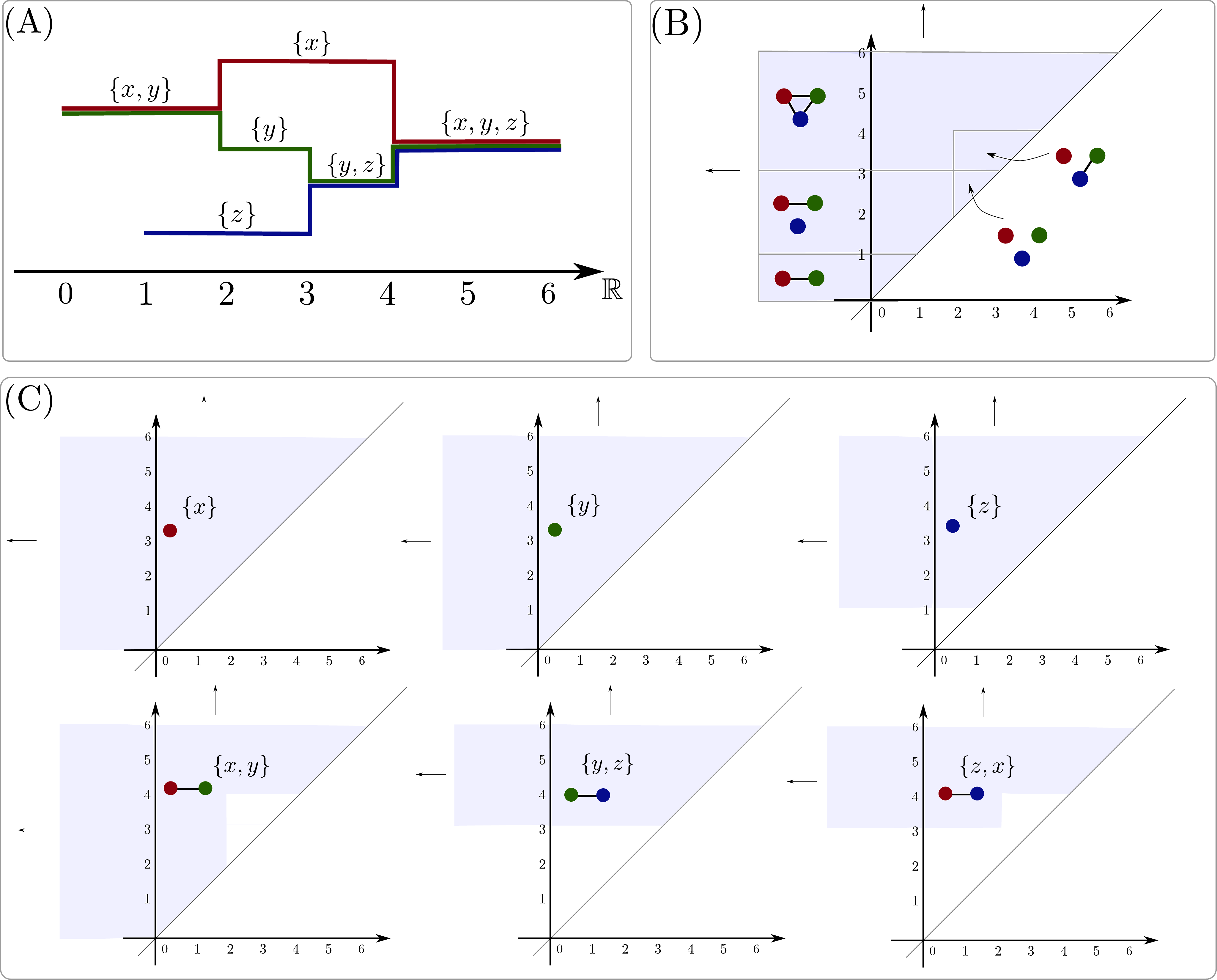}
    \caption{(A) A formigram $\theta$ over $\{x,y,z\}$ such that $\theta(t)=\emptyset$ for $t\notin [0,6]$. $x,y,z$ are colored in red, green and blue,  respectively. (B) The corresponding map $\widehat{\theta}:\Int\rightarrow \subpart(X)$ (Defn. \ref{dfn:theta hat}). (C)  The \upcode{} of $\theta$ (Defn. \ref{dfn:upcode}).}
    \label{fig:precosheaf}
\end{figure}

\begin{figure}
  \centering
\includegraphics[width=1\textwidth]{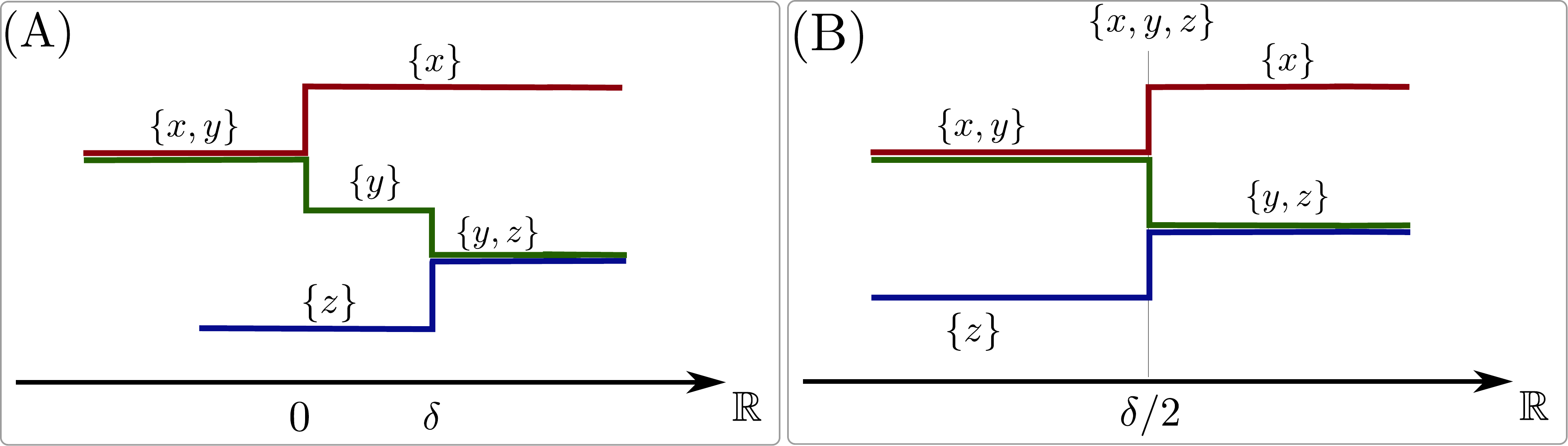}
        \caption{(A) A formigram $\theta$ over $\{x,y,z\}$. $x,y,z$ are colored in red, green and blue,  respectively. (B) The formigram $\theta':=\Sfunc_{\delta/2}(\theta)$ (cf. Defn. \ref{def:smoothing}).
The \upcode{}s of  $\theta$ and $\theta'$ are illustrated in Fig.~\ref{fig:multiple-pre-cosheaves}.} 
        \label{fig:e-shifted-formi}
\end{figure}

\begin{figure}
    \includegraphics[width=\textwidth]{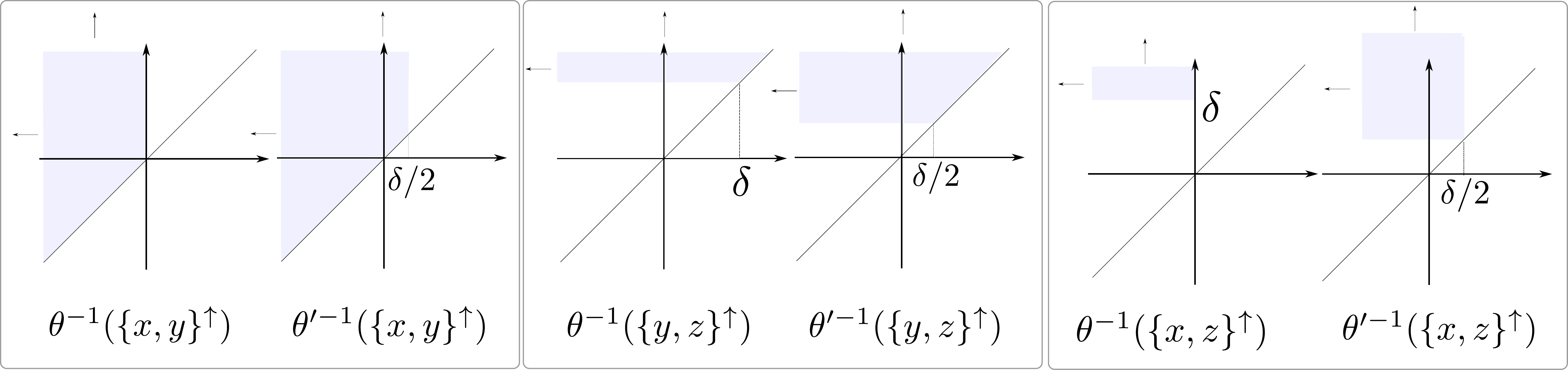}

    \caption{Consider the formigrams $\theta$ and $\theta'$ in Fig.~\ref{fig:e-shifted-formi}. Observe that $\dhaus\left(\theta^{-1}(\{x,y\}^\uparrow),\theta'^{-1}(\{x,y\}^\uparrow)\right)=\dhaus\left(\theta^{-1}(\{y,z\}^\uparrow),\theta^{-1}(\{y,z\}^\uparrow)\right)=\dhaus\left(\theta^{-1}(\{x,z\}^\uparrow),\theta'^{-1}(\{x,z\}^\uparrow)\right)=\delta/2$. Also, for any $w\in\{x,y,z\}$,  $\dhaus\left((\theta^{-1}(\{w\}^\uparrow),\theta'^{-1}(\{w\}^\uparrow)\right)=\dhaus\left(\Int,\Int\right)=0$. By Thm.~\ref{thm:metric-dec-formi} we obtain $\df(\theta,\theta')=\delta/2.$} 
     \label{fig:multiple-pre-cosheaves} 
\end{figure}

\begin{remark}\label{rem:isometry theorem for formigrams} 
\begin{enumerate}[label=(\roman*)]
    \item   Thm.~\ref{thm:metric-dec-formi} is analogous to the \emph{isometry theorem for zigzag modules} \cite{bjerkevik2016stability,botnan2018algebraic}, which says that a certain interleaving distance between $\vect$-valued zigzag modules is equal to the bottleneck distance between their \emph{block barcodes}.  
    \label{item:isometry theorem for formigrams1}

    \item  Recall the notions of dendrograms and the ultrametrics induced by dendrograms  (Defn.~\ref{dfn:dendrograms}). In Thm.~\ref{thm:metric-dec-formi}, let us assume that $\theta$ and $\theta'$ are dendrograms over $X$. Then, for each $x,x'\in X$, $\theta^{-1}\left(\left\{x,x'\right\}^{\uparrow}\right)=\left\{(a,b)\in \Int:b\in [u_{\theta}(x,x'),\infty)\right\}$ and similarly for $\theta'^{-1}\left(\left\{x,x'\right\}^{\uparrow}\right)$. 
Therefore,
\[\dhaus\left(\theta^{-1}(\{x,x'\}^{\uparrow}),\theta'^{-1}(\{x,x'\}^{\uparrow})\right)=|u_{\theta}(x,x')-u_{\theta'}(x,x')|\] 
and in turn
 $\df(\theta,\theta')=\displaystyle \max_{x,x'\in X}|u_\theta(x,x')-u_{\theta'}(x,x')|.$\label{item:isometry theorem for formigrams2} 
\end{enumerate}
\end{remark}

\paragraph{Structure of $\dintf$ and related metrics.}
We can reformulate the Gromov-Hausdorff distance between formigrams (Defn.~\ref{def:general formigram distance}) via the \upcode{}s of formigrams:

\begin{theorem}
\label{thm:structmax}
Let $\theta_X$,$\theta_Y$ be any two formigrams over $X$ and $Y$, respectively. Then, we have:
\begin{equation}\label{eq:Gromov-Hausdorff between formigrams}
        \dintf(\theta_X,\theta_Y)=\frac{1}{2}\min_R\max_{\substack{ (x,y)\in R\\ (x',y')\in R}} \dhaus\left(\theta_X^{-1}(\{x,x'\}^{\uparrow}),\theta_Y^{-1}(\{y,y'\}^{\uparrow})\right), 
\end{equation}
where the minimum is taken over all tripods $\tripod$ between $X$ and $Y$.
\end{theorem}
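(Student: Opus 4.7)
The plan is to reduce the statement to the structural theorem for $\df$ (Thm.~\ref{thm:metric-dec-formi}) applied to the pulled-back formigrams, and then to re-parameterize the resulting maximum using the tripod's surjectivity. Throughout, I would regard formigrams as maps $\Int\to\subpart(-)$ via Defn.~\ref{dfn:theta hat}, which is legitimate by Prop.~\ref{thm:metric-embed}.

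First, I would fix an arbitrary tripod $\tripod$ between $X$ and $Y$ and analyze the \upcode{} of each pullback. Concretely, I would prove the basic identity
\begin{equation*}
 U_{\{z,z'\}}(\varphi_X^\ast\theta_X)\;=\;U_{\{\varphi_X(z),\varphi_X(z')\}}(\theta_X),\qquad z,z'\in Z,
\end{equation*}
with the convention that $\{\varphi_X(z),\varphi_X(z')\}$ collapses to a singleton when $\varphi_X(z)=\varphi_X(z')$. This identity unravels from the definition of the pullback subpartition $\varphi_X^\ast P=\{\varphi_X^{-1}(B):B\in P\}$: the condition $\{\{z,z'\}\}\leq \varphi_X^\ast(\theta_X(I))$ is equivalent to $\{z,z'\}\subset \varphi_X^{-1}(B)$ for some $B\in\theta_X(I)$, which is in turn equivalent to $\varphi_X(z),\varphi_X(z')\in B$, i.e.\ $\{\{\varphi_X(z),\varphi_X(z')\}\}\leq \theta_X(I)$.

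Next, I would apply the structural Thm.~\ref{thm:metric-dec-formi} to the pair of formigrams $\varphi_X^\ast\theta_X,\varphi_Y^\ast\theta_Y$ over the common underlying set $Z$, obtaining
\begin{equation*}
\df\bigl(\varphi_X^\ast\theta_X,\varphi_Y^\ast\theta_Y\bigr)\;=\;\max_{z,z'\in Z}\dhaus\bigl(U_{\{z,z'\}}(\varphi_X^\ast\theta_X),\,U_{\{z,z'\}}(\varphi_Y^\ast\theta_Y)\bigr).
\end{equation*}
Substituting the identity from the previous step on both sides, the maximum becomes
\begin{equation*}
\max_{z,z'\in Z}\dhaus\bigl(U_{\{\varphi_X(z),\varphi_X(z')\}}(\theta_X),\,U_{\{\varphi_Y(z),\varphi_Y(z')\}}(\theta_Y)\bigr).
\end{equation*}
The key reparameterization step is to observe that the surjections $\varphi_X,\varphi_Y$ imply that the map $z\mapsto(\varphi_X(z),\varphi_Y(z))$ has image exactly $R$; hence running $z,z'$ over $Z$ is the same as running $(x,y)$ and $(x',y')$ over $R$. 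This yields
\begin{equation*}
\df\bigl(\varphi_X^\ast\theta_X,\varphi_Y^\ast\theta_Y\bigr)\;=\;\max_{\substack{(x,y)\in R\\(x',y')\in R}}\dhaus\bigl(U_{\{x,x'\}}(\theta_X),\,U_{\{y,y'\}}(\theta_Y)\bigr).
\end{equation*}

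Finally, I would take the minimum over all tripods $R$ and divide by $2$, invoking Defn.~\ref{def:general formigram distance}; since $X$ and $Y$ are finite the minimum is achieved, and the claim follows. The only mildly delicate point is the book-keeping in the first step, where one must handle the singleton case $\varphi_X(z)=\varphi_X(z')$ uniformly with the doubleton case — but since Thm.~\ref{thm:metric-dec-formi} already takes its maximum over all (possibly equal) pairs $x,x'\in X$, this introduces no extra complication.
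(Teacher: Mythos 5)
Your overall route is exactly the paper's: fix a tripod, apply Thm.~\ref{thm:metric-dec-formi} to the two pullbacks over the common set $Z$, identify $U_{\{z,z'\}}(\varphi_X^\ast\theta_X)$ with $U_{\{\varphi_X(z),\varphi_X(z')\}}(\theta_X)$, reparameterize the maximum over $Z\times Z$ as a maximum over pairs in $R$, and then minimize over tripods. The structure is sound and the conclusion follows once the upper-set identity is in place.

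The one step that is too quick is precisely the identity itself. You justify it by writing the membership condition as $\{\{z,z'\}\}\leq \varphi_X^\ast(\theta_X(I))$ and then unwinding the definition of the pullback of a \emph{single} subpartition. But the object appearing in $U_{\{z,z'\}}(\varphi_X^\ast\theta_X)$ is $\widehat{\varphi_X^\ast\theta_X}(I)=\bigvee_{s\in I}\varphi_X^\ast\bigl(\theta_X(s)\bigr)$, not $\varphi_X^\ast\bigl(\bigvee_{s\in I}\theta_X(s)\bigr)$; your argument silently interchanges the pullback with the join over $s\in I$. These two subpartitions of $Z$ do coincide, but only because $\varphi_X$ is surjective: if $\varphi_X(z)\sim\varphi_X(z')$ in $\bigvee_{s\in I}\theta_X(s)$, the witnessing chain $x_0,\dots,x_\ell$ in $X$ must be lifted to a chain in $Z$, which requires every $x_i$ to have a preimage (a non-surjective $\varphi_X$ gives genuine counterexamples, e.g.\ a chain passing through a point of $X$ outside $\im(\varphi_X)$). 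This lifting-of-chains argument is where the paper's proof spends essentially all of its effort, and it is also the only place surjectivity of $\varphi_X$ enters the identity step (you invoke surjectivity only later, for the reparameterization). So the gap is fillable with a short zigzag argument, but as written the ``unravels from the definition'' step is not a proof of the identity you need.
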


\begin{proof} This follows from Thm.~\ref{thm:metric-dec-formi} and the equivalence in (\ref{eq:pullback equivalence}). Details are omitted.
\end{proof}

\begin{remark}
\label{rem:relating to GH} 
\begin{enumerate}[label=(\roman*)]
    \item Thm.~\ref{thm:structmax} shows that $\dintf$ in Eqn.~(\ref{eq:Gromov-Hausdorff between formigrams}) has exactly the same structure as the distance $d_{\mathcal{Q}}$ introduced in \cite[page~69]{carlsson2010multiparameter}, and the distance $d_{\mathrm{CI}}$ in \cite[Defn.~2.14]{rolle2020stable}. The only difference is that $\dgh$, $d_{\mathcal{Q}}$ and $d_{\mathrm{CI}}$ compare respectively $\Int$-indexed hierarchical clusterings, $\R^2$-indexed hierarchical clusterings, and $\R^n$-indexed hierarchical clusterings.  \label{item:relating to GH1}
    \item  In Thm.~\ref{thm:structmax}, assume that $\theta_X$ and $\theta_Y$ are \emph{dendrograms} over $X$ and $Y$, respectively (Defn.~\ref{dfn:dendrograms}). \label{item:relating to GH2}
Then, by Rmk.~\ref{rem:isometry theorem for formigrams} \ref{item:isometry theorem for formigrams2}, we have that
\[\dintf(\theta_X,\theta_Y)=\frac{1}{2} \min_R\max_{\substack{(x,y)\in R \\ (x',y')\in R }} |u_{\theta_X}(x,x')-u_{\theta_Y}(y',y)|. \] 
Note that the RHS coincides with the Gromov-Hausdorff between (ultra)metric spaces (Defn.~\ref{def:Gromov-Hausdorff}), which is known to be NP-hard to compute \cite{schmiedl2017computational}. Therefore, computing  $\dgh$ between formigrams is also NP-hard \cite{kim2017stable}. It is not difficult to see that the previous item implies  that computing $d_{\mathcal{Q}}$ and $d_{\mathrm{CI}}$ is also NP-hard.
\end{enumerate}
\end{remark}

All  distances mentioned in Rmk.~\ref{rem:relating to GH} can now be seen as specializations of the following (cf. Thm.~\ref{thm:hierarchical clustering}): 
\begin{definition}\label{def:dGH between HC over P}Let $\PPPP$ be a poset with a flow. Given any $\theta_X:\PPPP\rightarrow\subpart(X)$ and $\theta_Y:\PPPP\rightarrow \subpart(Y)$, the \textbf{Gromov-Hausdorff} distance between them is defined by:
    \[\dgh(\theta_X,\theta_Y)=\frac{1}{2}\min_R\dint(\varphi_X^\ast\theta_X,\varphi_Y^{\ast}\theta_Y)=\frac{1}{2}\min_R\max_{\substack{ (x,y)\in R\\ (x',y')\in R}} \dhaus\left(\theta_X^{-1}(\{x,x'\}^{\uparrow}),\theta_Y^{-1}(\{y,y'\}^{\uparrow})\right),\]
where the minimum is taken over all tripods $\tripod$ between $X$ and $Y$.
\end{definition}

\subsubsection{Computational cost for the calculation of $\df$}\label{sec:complexity}

In this section we elucidate the computational complexity of the formigram interleaving distance $\df$ (Thm.~\ref{thm:df complexity}). We do this by clarifying the complexity of each preliminary step for the calculation of $\df$. 
Many ideas in this section can be adapted to the case of the interleaving distance between multiparameter hierarchical clustering (cf. Eqn.~(\ref{eq:interleaving beween multiclustering}). 

Let $\theta$ be a formigram over $X$ (Defn.~\ref{dfn:formi}). Let $n:=|X|$ and $m:=\crit(\theta)$. 
\begin{proposition}\label{prop:cosheaf computation}
 Computing the corresponding poset map $\widehat{\theta}:\Int\rightarrow \subpart(X)$ (Defn.~\ref{dfn:theta hat}) requires time $O(n^2m^2)$.
\end{proposition}

\begin{proposition}\label{prop:complexity for the upcode of theta}
Given $\widehat{\theta}:\Int\rightarrow \subpart(X)$, computing the \upcode{} of $\theta$ takes time $O(n^4m^2)$ on average. 
\end{proposition}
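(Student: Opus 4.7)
The plan is to compute the cosheaf-code $B(\theta) = \{U_{\{x,x'\}}(\theta)\}_{x,x'\in X}$ by leveraging the constructibility of $\widehat\theta$, organizing the work around cells of constancy. Since $\theta$ has $m$ critical points, the horizontal and vertical lines through these critical values decompose $\Int=\{(a,b):a<b\}$ into $O(m^2)$ maximal rectangular cells on each of which $\widehat\theta$ takes a constant value in $\subpart(X)$. Each upper set $U_{\{x,x'\}}(\theta)$ is therefore a union of such cells, and the algorithmic task reduces to deciding, cell-by-cell, whether $\{\{x,x'\}\}\leq \widehat\theta(I)$ for one representative interval $I$ of each cell.

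The main loop I would implement is then the following: enumerate the $O(m^2)$ cells, and for each cell choose a representative $I$ and retrieve the partition $\widehat\theta(I)$ from the input cosheaf; for each of the $\binom{n}{2}=O(n^2)$ pairs $\{x,x'\}\in\binom{X}{2}$, test whether $x$ and $x'$ lie in a common block of $\widehat\theta(I)$; if they do, record $I\in U_{\{x,x'\}}(\theta)$ in the data structure representing this upper set. Correctness is immediate from the definition of $U_{\{x,x'\}}(\theta)$ and the constancy of $\widehat\theta$ on each cell.

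For the cost analysis, with a straightforward list-of-blocks representation of each $\widehat\theta(I)$, a single equivalence query $x\sim_{\widehat\theta(I)}x'$ can cost up to $O(n^2)$: one may need to scan blocks of total size $O(n)$ to locate the block containing $x$, and then scan that block (itself of size at most $O(n)$) to test membership of $x'$. Summing $O(n^2)$ queries over each of the $O(m^2)$ cells yields the advertised $O(n^4m^2)$ bound. The qualifier \emph{on average} reflects the standard option of preprocessing each partition into a hashed index mapping every element of $X$ to its block identifier in $\widehat\theta(I)$, which reduces each equivalence query to expected constant time and hence gives a strictly better expected total cost.

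There is no substantive mathematical obstacle here; the principal point requiring care is the cell count, namely verifying that the decomposition of $\Int$ induced by the critical values of $\theta$ has $O(m^2)$ cells on which $\widehat\theta$ is constant, which follows directly from the fact that $\widehat\theta((a,b))$ depends only on the pair of consecutive critical values bracketing $a$ and $b$. All remaining steps are routine bookkeeping over cells and pairs.
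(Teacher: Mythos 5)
Your argument is correct, and it reaches the stated bound by a genuinely more elementary route than the paper. Both proofs implicitly use the same $O(m^2)$ family of representative intervals determined by $\crit(\theta)$ (the paper's Fig.~\ref{fig:cosheaf computation} grid is exactly your cell decomposition, and your justification that $\widehat{\theta}$ is constant on each cell — via local maximality at critical points — is sound, including on the lower-dimensional grid pieces where an endpoint equals a critical value). The difference is in how the pairwise equivalence information is extracted per cell: the paper encodes each subpartition $P$ as an $n\times k$ membership matrix $M_P$ and computes $M_P(M_P)^t$ using an expected-time sparse matrix multiplication result, which is where its ``on average'' comes from and which requires the explicit assumption that all subpartitions of $X$ are equally likely; you instead answer each query $x\sim_{\widehat{\theta}(I)}x'$ directly from the block structure, which yields a deterministic worst-case bound with no distributional hypothesis, and your ``on average'' is then automatic (a worst-case bound implies the average-case one; the hashing remark is unnecessary). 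Two small points of slack in your accounting, neither harmful to the claim: the two-scan query you describe costs $O(n)$, not $O(n^2)$, so your own procedure is in fact $O(n^3m^2)$ worst case (and $O(n^2m^2+nm^2)$ if you precompute a block-label array once per cell), comfortably inside the advertised $O(n^4m^2)$; and since the proposition only asserts an upper bound, this overestimate does not affect correctness. In short, your approach trades the paper's randomized matrix-product subroutine for routine per-pair bookkeeping and thereby proves a slightly stronger, assumption-free statement.
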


\begin{proposition}\label{prop:complexity of df} Assume that  the \upcode{}s of two formigrams $\theta$ and $\theta'$ over $X$ are given where  $n:=|X|$, $m:=|\crit(\theta)|$ and $m':=|\crit(\theta')|$. Computing $\df(\theta,\theta')$ requires  time $O(n^2(m+m'))$.
\end{proposition}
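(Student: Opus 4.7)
My plan is to invoke Thm.~\ref{thm:metric-dec-formi} to reduce the calculation of $\df(\theta,\theta')$ to a finite family of $\ell^\infty$-Hausdorff-distance computations in $\Int$, and then to argue that each of these admits an $O(m+m')$-time algorithm thanks to the staircase structure of the upper sets that constitute the \upcode{}s.

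First, by Thm.~\ref{thm:metric-dec-formi},
\[
\df(\theta,\theta')=\max_{\{x,x'\}\subseteq X}\dhaus\bigl(U_{\{x,x'\}}(\theta),\,U_{\{x,x'\}}(\theta')\bigr),
\]
and the index set contains $n+\binom{n}{2}=O(n^2)$ pairs. Since the \upcode{}s $B(\theta)$ and $B(\theta')$ supply these upper sets directly, it remains to bound the cost of a single Hausdorff-distance computation.

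For the structural step, I would show that the minimal antichain of each $U_{\{x,x'\}}(\theta)$ has cardinality $O(m)$. Since $\theta$ is constant on each of the $m+1$ open intervals complementary to $\crit(\theta)$, membership $(a,b)\in U_{\{x,x'\}}(\theta)$ depends only on which critical points lie in $[a,b]$, so every minimal element of $U_{\{x,x'\}}(\theta)$ has both endpoints in $\crit(\theta)\cup\{-\infty,+\infty\}$. Because the order on $\Int$ is interval containment, any antichain is a sequence $(a_1,b_1),\ldots,(a_k,b_k)$ with both $(a_i)$ and $(b_i)$ strictly monotone, so the coordinates are drawn injectively from a set of size $m+2$, forcing $k\leq m+2$. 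An analogous argument bounds the minimal antichain of $U_{\{x,x'\}}(\theta')$ by $O(m')$, and both sorted antichains are extracted from the given \upcode{}s within these sizes.

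Finally, for the algorithmic step, I would compute $\dhaus(U,V)$ directly from the two sorted antichains. Since $V$ is an upper set, the supremum $\sup_{u\in U}\inf_{v\in V}\norm{u-v}_\infty$ is attained at a minimal element of $U$, and symmetrically for the other direction. For minimal elements $u_i=(a_i,b_i)$ of $U$ and $v_j=(\alpha_j,\beta_j)$ of $V$, a short calculation yields $\inf_{v\geq v_j}\norm{u_i-v}_\infty=\max\{(a_i-\alpha_j)_+,(\beta_j-b_i)_+\}$, which is unimodal in $j$, and whose minimiser $j^\ast(i)$ is non-decreasing in $i$ because increasing $i$ weakly raises $(a_i-\alpha_j)_+$ and weakly lowers $(\beta_j-b_i)_+$ for every $j$. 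A standard two-pointer sweep thus computes $\inf_{v\in V}\norm{u_i-v}_\infty$ for every $i$ in total time $O(m+m')$, and the symmetric sweep handles the other direction. Combined with the outer maximum over the $O(n^2)$ pairs this yields $O(n^2(m+m'))$ overall. The main technical point --- and the step I expect to require the most care --- is the monotonicity of $j^\ast(\cdot)$, without which the Hausdorff step would cost an extra logarithmic (or linear) factor per pair.
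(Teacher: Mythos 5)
Your reduction via Thm.~\ref{thm:metric-dec-formi} to $O(n^2)$ Hausdorff computations is exactly the paper's first step, but your handling of each single Hausdorff computation takes a genuinely different route. The paper proves Lemma~\ref{lem:lines} (the Hausdorff distance between two upper sets of $(\Int,\norm{-}_{\infty})$ equals the supremum over slope $-1$ lines of one-dimensional Hausdorff distances of the slices), notes that $U_{\{x,x'\}}(\theta)$ and $U_{\{x,x'\}}(\theta')$ are staircases with at most $2m-1$ and $2m'-1$ corner points, and then evaluates only the $O(m+m')$ lines through corners, each slice distance being $\norm{p-q}_\infty$ for the minima $p,q$ of the two slices. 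You instead work directly with the two sorted corner sequences, using the closed form $\inf_{v\geq v_j}\norm{u_i-v}_\infty=\max\{(a_i-\alpha_j)_+,(\beta_j-b_i)_+\}$, quasiconvexity of this expression in $j$ (max of a non-increasing and a non-decreasing term), and monotonicity of the minimiser $j^\ast(i)$, which does hold for the reason you give (as $i$ grows the first term rises and the second falls pointwise), so the two-pointer sweep is legitimate and gives $O(m+m')$ per pair. Both routes land on $O(n^2(m+m'))$; yours trades the paper's geometric slicing lemma for a monotone-matching argument and is more explicitly algorithmic, at the cost of the monotonicity verification you flag.

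Two small repairs are needed in your structural step. First, the sets $U_{\{x,x'\}}(\theta)$ need not be closed and may have \emph{no} minimal elements at all: for a dendrogram, $U_{\{x,x'\}}(\theta)=\{(a,b)\in\Int: b>u_\theta(x,x')\}$, whose would-be corner sits on the diagonal $a=b$ outside $\Int$. You should therefore phrase the argument in terms of the $O(m)$ corner points of the (closure of the) staircase boundary, allowing corners on the diagonal --- this is what the paper means by ``corner points'' --- and note that the Hausdorff distance is unchanged under closure and that your formula remains valid as an infimum when a corner lies on the diagonal; the sweep itself is unaffected. Second, membership $(a,b)\in U_{\{x,x'\}}(\theta)$ is not determined solely by which critical points lie in $[a,b]$: it also depends on which complementary intervals $(a,b)$ meets, since $\theta$ takes possibly different constant values there. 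The statement you actually need, and which is true, is that membership is constant on the cells of the grid determined by $\crit(\theta)$, so corner coordinates lie in $\crit(\theta)\cup\{\pm\infty\}$ and your antichain count $O(m)$ (the paper's $2m-1$) stands.
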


In sum:
\begin{theorem}\label{thm:df complexity}
Given two formigrams $\theta$ and $\theta'$, computing $\df(\theta,\theta')$ requires  
$O(n^4\ell^2)$ in expectation where $\ell:=\max(m,m')$.
\end{theorem}
As we already saw in Sec.~\ref{sec:formigrams}, any formigram $\theta$ over $X$ can be visualized as a topological graph over the real line, annotated by elements in $X$. This graph is called the \textbf{underlying Reeb graph} of $\theta$. A rigorous definition is given in \cite{kim2017stable}. We can significantly reduce the complexity $O(n^4\ell^2)$ mentioned above to $O(n^2\cdot\ell^{1.5}\log \ell)$ by restricting ourselves to formigrams whose underlying Reeb graphs do not contain any loops\footnote{For example, the formigram depicted in Fig.~\ref{fig:precosheaf}~(A) contains a loop, whereas the ones in Fig.~\ref{fig:e-shifted-formi} (A) and (B) do not.}; see Thm.~\ref{thm:df complexity for trees} in the appendix. To prove this claim, we utilize a special relationship between the bottleneck distance and the Hausdorff distance on the real line which may be of independent interest (Thm.~\ref{thm:dH=dB}).

\paragraph{Proof of Prop.~\ref{prop:cosheaf computation}.}
We begin with the following lemma:

\begin{lemma}\label{lem:refinement}Let $P_1$ and $P_2$ be any two subpartitions of $X$, with $n:=|X|$. Computing $P_1\vee P_2$ requires at most time $O(n^2)$. 
\end{lemma}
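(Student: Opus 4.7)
The plan is to exhibit a concrete algorithm computing $P_1 \vee P_2$ whose running time matches the claimed bound, following the constructive recipe from Section~\ref{sec:subpartitions}. Recall that if $X_i$ denotes the underlying set of $P_i$, then $P_1 \vee P_2$ is the quotient of $X_1 \cup X_2$ by the smallest equivalence relation $\sim$ containing both $\sim_{P_1}$ and $\sim_{P_2}$. The strategy is to translate this into a graph connected-components problem.

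First I would construct an auxiliary graph $G$ with vertex set $V := X_1 \cup X_2$ as follows: for each block $B \in P_1$, fix an arbitrary ordering $x_1, \ldots, x_{|B|}$ of $B$ and insert the edges $\{x_i, x_{i+1}\}$ for $i=1,\ldots,|B|-1$; do the same for every block of $P_2$. By construction, two vertices $x, x' \in V$ lie in the same connected component of $G$ if and only if there is a finite chain $x = z_0, z_1, \ldots, z_k = x'$ in which each consecutive pair lies in a common block of $P_1$ or of $P_2$; this is precisely the relation $\sim$ described above. Hence the connected components of $G$ are exactly the blocks of $P_1 \vee P_2$.

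Next I would bound the sizes involved: $|V| \leq n$, while each block $B$ contributes $|B| - 1$ edges and the block sizes of each $P_i$ sum to at most $n$, so $|E| \leq 2n$. Assuming a standard representation of $P_1$ and $P_2$ as lists of blocks, the graph $G$ can be built in $O(n)$ time. Running breadth-first search on $G$ then computes its connected components in $O(|V|+|E|) = O(n)$ time, and a final $O(n)$ traversal outputs the components as the subpartition $P_1 \vee P_2 \in \subpart(X)$.

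The total running time is therefore $O(n)$, which is in particular $O(n^2)$, proving the lemma. There is no real obstacle: the only point requiring verification is that the graph construction faithfully realizes the equivalence relation $\sim$, and this is immediate from the definition of $\sim$ as the transitive closure of $\sim_{P_1} \cup \sim_{P_2}$. The slack between the $O(n)$ achievable bound and the $O(n^2)$ stated in the lemma suggests that the authors likely have in mind the simpler but less efficient algorithm that iterates over all pairs in $X \times X$ and performs a union-find merge whenever the pair lies in a common block of $P_1$ or $P_2$; either variant yields the claim.
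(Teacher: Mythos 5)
Your proof is correct and follows essentially the same route as the paper's: both reduce the computation of $P_1 \vee P_2$ to finding the connected components of a graph on $X_1 \cup X_2$ whose edges record co-membership in a block of $P_1$ or $P_2$. The only difference is that the paper places a clique on each block (so the edge count can reach $\binom{n}{2}$, giving the stated $O(n^2)$), whereas your spanning-path construction keeps the edge count at most $2n$ and yields the sharper bound $O(n)$ --- a strict quantitative improvement, but not a different argument.
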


\begin{proof} 
For $i=1,2$, let $X_i\subset X$ be the underlying set of $P_i$. Let us consider the undirected simple graph $G_i=(X_i,E_i)$ derived from $P_i$, where $\{x,y\}\in E_i$ if and only if $x$ and $y$ belong to the same block of $X_i$. Note that $P_1\vee P_2$ is the partition of $X_1\cup X_2$ where each block of $P_1\vee P_2$ constitutes a connected component of the graph $G_1\cup G_2=(X_1\cup X_2, E_1\cup E_2)$. Therefore, computing $P_1\vee P_2$ is equivalent to computing the connected components of $G_1\cup G_2$. One needs $O(|X_1\cup X_2|+|E_1\cup E_2|)$ in time to partition $X_1\cup X_2$ according to the components of $G_1\cup G_2$ \cite[Section 5]{erickson2019algorithms}. Since $|X_1\cup X_2|\leq |X|= n$ and $|E_1\cup E_2|\leq \binom{n}{2}$, at worst time $O(n^2)$ will be necessary.
\end{proof}

\begin{proof}[Proof of Prop.~\ref{prop:cosheaf computation}] The claim directly follows from Lem.~\ref{lem:refinement} and the observation that, in order to compute $\widehat{\theta}$, it suffices to compute $O(\binom{m}{2})=O(m^2)$ different join operations between two (sub)partitions of $X$. See Fig.~\ref{fig:cosheaf computation} (A) for an illustrative example. 
\end{proof}

\begin{figure}
    \centering
    \includegraphics[width=0.7\textwidth]{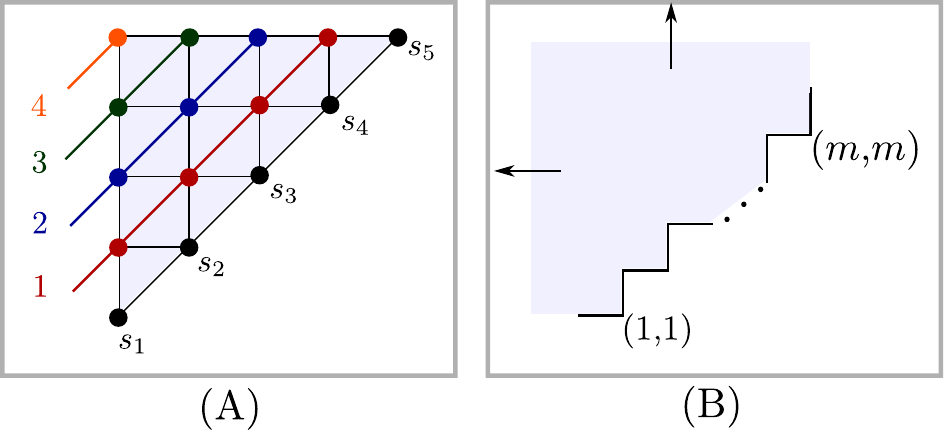}
    \caption{(A) For a given formigram $\theta$, let us assume that $\crit(\theta)=\{s_1<s_2<s_3<s_4<s_5\}$.  The main diagonal line stands for the real line via the bijection $(r,r)\in \R^2\leftrightarrow r\in \R$. We assign a subpartition of $X$ to each colored point in the grid as follows: For each point $v$ in the line $1$, assign $\theta(s_i)\vee\theta(s_{i+1})$ where $s_i$ and $s_{i+1}$ are adjacent to $v$ in the grid. For each point in line $i$, assign $P_1\vee P_2$ where $P_1$ and $P_2$ are the subpartitions assigned to two points in line $i-1$ which are adjacent to $v$. Observe that, given any $I\in\Int$ with $I\cap\crit(\theta)\neq \emptyset$, $\widehat{\theta}(I)$ is equal to the subpartition assigned to the maximal point $v=(v_1,v_2)$ in the grid $(\subset \R^{\mathrm{op}}\times\R)$ s.t. $v_1,v_2\in I$. (B) An illustration of $\theta_X^{-1}(\{x,x'\}^\uparrow)$  
    of which the number of its corner points is maximal, $2m-1$.}
    
    \label{fig:complexity illustration}   
    \label{fig:cosheaf computation}
\end{figure}

\paragraph{Proof of Prop.~\ref{prop:complexity for the upcode of theta}.}
Let $X=\{x_1,\ldots, x_n\}$, and let $P=\{C_1,\ldots, C_k\}$ be a subpartition of $X$. This $P$ can be encoded as the $n\times k$ \emph{membership matrix} $M_P=(m_{ij})$ where the $i$-th row and  the $j$-th column correspond to $x_i\in X$ and  $C_j\in P$ respectively, and 
\[m_{ij}:=\begin{cases}
      1,& \mbox{if $x_i$ belongs to $C_j$}\\
      0,& \mbox{otherwise.}
\end{cases}
\]
Note that the $(i,j)$-entry of $M_P(M_P)^t$ is $1$ iff $x_i$ and $x_j$ belong to the same block, and $0$ otherwise. \emph{Let us assume that all subpartitions of $X$ are equally likely choices for $P$}. Then, since $k$ cannot exceed $n$, computing $M_P(M_P)^t$ is expected to take \emph{at most} in time $O(n^2)$   \cite[Thm.~3.1]{o1973fast}. 

Now recall from Def. \ref{dfn:upcode} that the \upcode{} of a formigram $\theta$ over an $n$-point set consists of $\binom{n}{2}$ upper sets $\widehat{\theta}_{\{x,x'\}}$, $x,x'\in X$ of $\Int$. Assuming that $m:=|\crit(\theta)|$, by the previous argument, computing each $\widehat{\theta}_{\{x,x'\}}$ requires at most in time $O\left(n^2\binom{m}{2}\right)=O(n^2m^2)$. Therefore, we directly have Prop.~\ref{prop:complexity for the upcode of theta}.

\paragraph{Proof of Prop.~\ref{prop:complexity of df}.}
Recall from Thm.~\ref{thm:metric-dec-formi} that computing $\df$ reduces to the calculation of the Hausdorff distance between upper sets of $(\Int,\norm{-}_{\infty})$. The lemma below provides an insight into computing $\df$:

\begin{lemma}
\label{lem:lines}
If $A,B$ are upper sets of $(\Int,\norm{-}_{\infty})$, then their Hausdorff distance is given by the formula
$$d_{\mathrm{H}}(A,B)=\sup_{\ell}d_{\mathrm{H}}(A\cap \ell,B\cap \ell),$$
where $\ell$ ranges over all lines of slope $-1$ in $\R^2$.
\end{lemma}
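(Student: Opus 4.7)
The plan is to first establish a pointwise reduction: for any upper set $A\subset\Int$ and any point $p=(a,b)\in\Int$, the $\ell^\infty$-distance $d(p,A)$ is realized along the line of slope $-1$ through $p$. More precisely, I claim that
\[
d(p,A)=\inf\{\e\geq 0 : (a-\e,b+\e)\in A\}.
\]
The inequality $\leq$ is immediate because each point $(a-\e,b+\e)$ has $\ell^\infty$-distance exactly $\e$ from $p$. For $\geq$, take any $(a',b')\in A$ with $\max(|a-a'|,|b-b'|)\leq\e$. Since $a-\e\leq a'$ and $b'\leq b+\e$, the poset order on $\Int$ (where first coordinates compare in reverse) gives $(a',b')\leq (a-\e,b+\e)$, so $(a-\e,b+\e)\in A$ because $A$ is upward closed. (One also checks $a-\e<b+\e$ since $a<b$, so this point lies in $\Int$.)

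Next I parametrize: let $\ell_p$ denote the line of slope $-1$ through $p$, i.e.\ $\ell_p=\{(a-t,b+t):t\in\R\}$. The $\ell^\infty$-distance from $p$ to $(a-t,b+t)$ equals $|t|$, and moreover for $t<0$ the point $(a-t,b+t)$ lies \emph{below} $p$ in the poset, so if it were in $A$ then $p$ itself would be in $A$, making $d(p,A)=0$. In every case the above claim therefore yields
\[
d(p,A)=d(p,A\cap\ell_p).
\]

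With this reduction in hand, the lemma follows by a direct manipulation of the definition of $d_H$. For the inequality $d_H(A,B)\leq\sup_\ell d_H(A\cap\ell,B\cap\ell)$: for each $p\in A$, since $p\in A\cap\ell_p$,
\[
d(p,B)=d(p,B\cap\ell_p)\leq d_H(A\cap\ell_p,B\cap\ell_p)\leq\sup_\ell d_H(A\cap\ell,B\cap\ell),
\]
and symmetrically for $p\in B$. For the reverse inequality, fix any line $\ell$ of slope $-1$; for each $p\in A\cap\ell$ we have $\ell_p=\ell$ and therefore $d(p,B\cap\ell)=d(p,B)\leq d_H(A,B)$, with the symmetric statement for $p\in B\cap\ell$, giving $d_H(A\cap\ell,B\cap\ell)\leq d_H(A,B)$; taking the supremum over $\ell$ finishes the argument.

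The main obstacle is the initial reduction: one must verify carefully that the poset structure on $\Int$ (with its reversed first coordinate) is compatible precisely with movement in the direction $(-1,+1)$, i.e.\ perpendicular to lines of slope $-1$, so that the $\ell^\infty$-closest point in an upper set always lies on $\ell_p$. Once that compatibility is nailed down, the two Hausdorff inequalities are routine.
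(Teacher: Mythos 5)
Your proof is correct and rests on the same key observation as the paper's: since upper sets of $\Int$ are closed under moving in the direction $(-1,+1)$, the nearest point of an upper set to $p$ can always be taken on the slope $-1$ line through $p$ (the paper inlines this as the step ``$y\leq (x_1-\e,x_2+\e)\in B$'', while you factor it out as an explicit pointwise reduction $d(p,A)=d(p,A\cap\ell_p)$ before deriving the two Hausdorff inequalities). This is essentially the same argument, just packaged slightly differently.
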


\begin{proof}
\begin{itemize}
    \item[($\geq$)] Pick any line $\ell$ of slope $-1$. Let $\e:=d_{\mathrm{H}}(A,B)$.
Let $x=(x_1,x_2)\in A\cap \ell$.
Then, there exists $y\in B$ such that $\norm{x-y}_{\infty}\leq\e$.
Since $B$ is an upper set, $y\leq (x_1-\e,x_2+\e)\in B$.
Since $\ell$ is of slope $-1$, $(x_1-\e,x_2+\e)$ lies on the line $\ell$, and thus $(x_1-\e,x_2+\e)\in B\cap \ell$.
In the same way, one can prove that  for any $x\in B\cap \ell$, there exists a $y\in A\cap \ell$ such that $\norm{x-y}_{\infty}\leq \e$. 
    \item[($\leq$)] Assume the RHS is less or equal to $\e$.
Pick $a\in A$ and a line $\ell$ of slope $-1$ which passes through $a$.
By assumptions, there is a $b\in B\cap \ell$ such that $\norm{a-b}_{\infty}\leq \e$.
This $b$ also belongs to $B$.
By symmetry,  for all $b\in B$, there exists an $a\in A$ such that $\norm{a-b}_{\infty}\leq \e$. 
\end{itemize}
\end{proof}

\begin{proof}[Proof of Prop.~\ref{prop:complexity of df}]
For $x,x'\in X$, the upper sets 
$A_{\{x,x'\}}:=\theta_X^{-1}(\{x,x'\}^\uparrow)$ and $B_{\{x,x'\}}:=(\theta_X')^{-1}(\{x,x'\}^\uparrow)$ 
have at most $2m-1$ and $2m'-1$ corner points, respectively (see Fig.~\ref{fig:complexity illustration} (B)). By Thm.~\ref{thm:metric-dec-formi}, computing $\df(\theta,\theta')$ reduces to computing  $d_{\mathrm{H}}\left(A_{\{x,x'\}},B_{\{x,x'\}}\right)$ for all $x,x'\in X$. By Lem. \ref{lem:lines},  computing  $d_{\mathrm{H}}\left(A_{\{x,x'\}},B_{\{x,x'\}}\right)$ requires at most $O((2m-1)+(2m'-1))=O(m+m')$  computations of $d_{\mathrm{H}}\left(A_{\{x,x'\}}\cap \ell,B_{\{x,x'\}}\cap \ell\right)$ where $\ell$ are (-1)-slope-lines passing through at least one of $O(m+m')$ corner points of $A_{\{x,x'\}}$ or $B_{\{x,x'\}}$. For any $\ell$, let $p,q\in \Rop\times \R$ be the unique minimums of $A_{\{x,x'\}}\cap \ell$ and $B_{\{x,x'\}}\cap \ell$, respectively. Then, it is not difficult to check that $d_{\mathrm{H}}\left(A_{\{x,x'\}}\cap \ell,B_{\{x,x'\}}\cap \ell\right)=\norm{p-q}_{\infty}$. Hence the claim follows. 
\end{proof}

\section{Discussions}\label{sec:discussions}

Some open questions are the following.

\noindent \emph{(1) What is the relationship between $\dt$ in Defn.~\ref{def:generalized tripod distance} and the edit distance $d_{\mathrm{edit}}$  in \cite{mccleary2020edit}  between lattice-indexed simplicial filtrations}? Both distances are a generalization or a rendition of certain distances that satisfy \emph{universality}; For $\dgh$, see Thm.~\ref{thm:universality} (also \cite[Prop.6.2.21]{scoccola2020locally}). $d_{\mathrm{edit}}$ is a rendition of the edit distance on Reeb graphs which  satisfies another universal property \cite{bauer2020reeb} (and is itself inspired on $\dgh$). Currently we know that $\dgh$ and $d_{\mathrm{edit}}$ cannot be strongly equivalent; it is not difficult to find a pair 
of simplicial filtrations such that $\dgh$ vanishes, but $d_{\mathrm{edit}}$ does not.

\noindent\emph{(2) Realization of erosion geodesics.} From Thm.~\ref{thm:di is geodesic}, we know that any two poset maps $\Ffunc,\Gfunc:\Int\rightarrow \Zop$ at  finite erosion distance can be joined by a geodesic path in $[\Int,\Zop]$. Assume that $\Ffunc$ and $\Gfunc$ are the rank functions of two persistence modules $M$ and $N$, respectively. The geodesic path $g:[0,1]\rightarrow[\Int,\Zop]$ between $\Ffunc$ and $\Gfunc$ constructed  in the proof of Thm.~\ref{thm:di is geodesic} is actually \emph{not} always \emph{realizable} by a path in $\vect^\R$, i.e.~there is sometimes no continuous map $h:[0,1]\rightarrow (\vect^\R,\dero\circ\rk)$ 
such that  $\rk\circ h=g$.\footnote{\emph{Example:} For $a,b\in \R$ with $a<b$, let $\mathbb{I}[a,b):\R\rightarrow \vect$ the \emph{interval module} with support $[a,b)$ \cite{chazal2016structure}. Let $M,N:\R\rightarrow \vect$ be defined as $M=\mathbb{I}[0,10)\oplus\mathbb{I}[10,16)$ and $N=\mathbb{I}[4,14)$. Now define $\Ffunc,\Gfunc$ to be $\rk(M)$ and $\rk(N)$ respectively.}
This implies that, we do not know at this point whether or not  $\dero(\rk(-),\rk(-))$ is a geodesic distance on the space of $\R$-indexed persistence modules. We believe that studying this can potentially be useful for clarifying the relationship between the bottleneck distance and  $\dero(\rk(-),\rk(-))$.

\appendix

\section{Persistence modules and rank functions}
  Let $\vect$ be the category of finite dimensional vector spaces and linear maps over a field $\F$.
\begin{definition}\label{dfn:persistence module} Any functor $M:(\R,\leq)\rightarrow \vect$ is said to be a (standard) \textbf{persistence module}, i.e.~each $r\in \R$ is sent to a vector space $M(r)$ and each pair $r\leq s$ in $\R$ is sent to a linear map $M(r\leq s)$. In particular, for all $r\leq s\leq t$, we have:
\[M(s\leq t)\circ M(r\leq s)= M(r\leq t).\]
$M$ is called \textbf{constructible} if there exists a finite set $\{c_1,\ldots,c_n\}\subset \R$ such that (i) for $i=1,\ldots,n$, whenever $r,s\in [c_i,c_{i+1})$ with $r\leq s$, $M(r\leq s)$ is the identity map (let $c_{n+1}:=\infty$), (ii) for $r\in (-\infty,a_1)$, $M(r)=0$.

By replacing the indexing poset $(\R,\leq)$ by $(\R^d,\leq)$ for $d\geq 2$, we obtain a \textbf{multiparameter persistence module}.
\end{definition}

Let $M:(\R,\leq)\rightarrow \vect$. For any $r\leq r' \leq s'\leq s$ in $\R$, we have
\[M(r\leq s)=M(s'\leq s)\circ M(r'\leq s')\circ M(r\leq r'),\]
which implies $\mathrm{rank}(M(r\leq s))\leq \mathrm{rank}(M(r'\leq s'))$.

\begin{definition}\label{dfn:rank invariant} Let $M:(\R,\leq)\rightarrow \vect$. The \textbf{rank function} $\rk(M):(\Int,\subset)\rightarrow \Zop$ of $M$ is defined as $[a,b]\mapsto \mathrm{rank}\big(M(a\leq b)\big).$
\end{definition}

\section{Interleaving distances in general} 
We review the general notion of interleaving distance in the language of \cite{deSilva2018} (Defn.~\ref{def:Interleavings of persistent elements} is a special instance of the definition below). Consult \cite{mac2013categories} for
general definitions related to category theory.  

Let $\PPPP$ be a poset with a flow, and $\PP$ is viewed as a category. Then, for each $\eps\in [0,\infty)$, $\Omega_\eps$ is an endofunctor on $\PP$ and we have $\Ifunc_\PP\leq \Omega_\eps$. 
We view this inequality as a natural transformation $\eta_\eps:\Ifunc_\PP\to \Omega_\eps$. Let $\C$ be any category and let $M:\PP\to \C$ be any functor. Then, we have a natural transformation $M\eta_\eps:M\to M\Omega_\eps$.

\begin{definition}\label{def:interleaving distance between persistence modules} 
Let $\PPPP$ be a poset with a flow. Any two functors $M,N:\PP\to \C$ are called \textbf{$\Omega_\eps$-interleaved} if there exists a pair of natural transformations $\varphi:M\to N\Omega_\eps$ and $\psi:N\to M\Omega_\eps$ such that the diagram below commutes. 
\[\begin{tikzcd}[ampersand
  replacement=\&]
M \arrow[rr,"M\eta_\eps"]\arrow[ddrr,"\varphi", pos=0.8] \& \&  M\Omega_\eps \arrow[rr,"M\eta_\eps\Omega_\eps"]\arrow[ddrr,"\varphi\Omega_\eps",pos=0.8] \& \& M\Omega_\eps \Omega_\eps\\
\\
N \arrow[rr,"N\eta_\eps",swap]\arrow[uurr,"\psi", pos=0.2,crossing over] \& \& N\Omega_\eps \arrow[rr,"N\eta_\eps\Omega_\eps",swap] \arrow[uurr,"\psi\Omega_\eps", pos=0.2,crossing over] \& \& N\Omega_\eps \Omega_\eps
\end{tikzcd}\]
The interleaving distance (with respect to $\Omega$) is:
$$\dint^\C(M,N) = \inf\{\eps\geq 0: \ M,N \ \mathrm{are \ } \Omega_\eps-\mbox{interleaved}\}$$
where $\dint^\C(M,N):=\infty$ if there is no $\eps$-interleaving for any $\eps\geq 0$.
\end{definition}

When $\C$ is a poset $\QQ$, this definition reduces to Defn.~\ref{def:Interleavings of persistent elements}. When $\PP = \R^n$ with the flow in (\ref{eq:l-infinity}) and $\C=\vect$, the distance $\dint^\C$ is the standard interleaving distance \cite{chazal2009proximity,lesnick2015theory}.

\section{(Gromov-)Hausdorff and Bottleneck distances.}\label{sec:dH}

We recall the definitions of the  \textbf{Hausdorff distance},  \textbf{Gromov-Hausdorff distance} \cite[Section 7.3.3]{burago2001course} and the \textbf{Bottleneck distance} \cite{lawler2001combinatorial} in that order.
\begin{definition}[Hausdorff distance]\label{def:Hausdorff distance} Let $A$ and $B$ be closed subsets of a metric space $(M,d)$. The Hausdorff
distance between $A$ and $B$ is defined as
$\dhaus(A, B) = \inf\{r \in [0,\infty): A \subset B^\eps \mbox{ and } B \subset A^{\eps}\}$, where $A^\eps:=\{m \in M: \exists a\in A,\ d(a,m)\leq \eps \}.$
\end{definition}

Let $(X,d_X)$ and $(Y,d_Y)$ be any two metric spaces and let $\tripod$ be a tripod (i.e.~a pair of surjective maps) between $X$ and $Y$. Then, the \emph{distortion} of $R$ is defined as \[\displaystyle\dis(R):=\sup_{\substack{z,z'\in Z}}\abs{d_X\left(\varphi_X(z),\varphi_X(z')\right)-d_Y\left(\varphi_Y(z),\varphi_Y(z')\right)}.\]

The Gromov-Hausdorff distance $\dgh$ measures how far two metric spaces are from being isometric. 

\begin{definition}[Gromov-Hausdorff distance] \label{def:Gromov-Hausdorff} The Gromov-Hausdorff distance between compact metric spaces $(X,d_X)$ and $(Y,d_Y)$ is defined as
	\[\dgh\left((X,d_X),(Y,d_Y)\right):=\frac{1}{2}\inf_R\ \dis(R),\]
	where the infimum is taken over all tripods $R$ between $X$ and $Y$.
\end{definition}

\paragraph{Bottleneck distance.} Bottleneck distance is an extensively studied metric. We adopt notation in \cite{bauer2013induced} to describe it. 
Partial bijections are referred to as \emph{matchings}. Given two nonempty sets $A$ and $B$, we use $\sigma:A\nrightarrow B$ to denote a matching $\sigma\subset A\times B$. The canonical projections of $\sigma$ onto $A$ and $B$ are denoted by $\mathrm{coim} (\sigma)$ and $\mathrm{im} (\sigma)$, respectively. By $\langle a,b \rangle$ for $a<b$ in $\R$, we denote one of the real intervals $(a,b)$, $(a,b]$, $[a,b)$, and $[a,b]$.

Letting $\mathcal{A}$ be a multiset of intervals in $\R$ and $\eps\geq 0$, $$\mathcal{A}^\eps:=\{\langle b,d\rangle \in \mathcal{A}:b+\eps<d \}=\{ I\in\mathcal{A}:[t,t+\eps]\subset I\ \mbox{for some}\ t\in\R\}.$$ Note that $\mathcal{A}^0=\mathcal{A}$.  
	\begin{definition}[Bottleneck distance]\label{def:bottleneck} Let $\mathcal{A}$ and $\mathcal{B}$ be multisets of intervals in $\R$. We define a $\delta$-matching between $\mathcal{A}$ and $\mathcal{B}$ to be a matching $\sigma:\mathcal{A}\nrightarrow\mathcal{B}$ such that 
		$\mathcal{A}^{2\delta}\subset  \mathrm{coim} (\sigma)$, $\mathcal{B}^{2\delta}\subset \mathrm{im} (\sigma)$, and if $\sigma\langle b,d\rangle=\langle b',d'\rangle$, then $$\langle b,d\rangle \subset \langle b'-\delta, d'+\delta \rangle,\hspace{5mm} \langle b',d'\rangle \subset \langle b-\delta, d+\delta \rangle.$$
		with the convention $+\infty +\delta=+\infty$ and $-\infty- \delta=-\infty$. We define the bottleneck distance $\db$ by
		$$\db(\mathcal{A},\mathcal{B}):=\inf\{\delta\in[0,\infty): \exists \mbox{$\delta$-matching between $\mathcal{A}$ and $\mathcal{B}$}\}.$$We declare $\db(\mathcal{A},\mathcal{B})=+\infty$ when there is no $\delta$-matching between $\mathcal{A}$ and $\mathcal{B}$ for any $\delta\in[0,\infty)$.
	\end{definition}

\section{On the (generalized) Gromov-Hausdorff distance}\label{sec:on the tripod distance}

The goal of this section is to establish basic properties of $\dgh$ in Defn.~\ref{def:generalized tripod distance}; Props.~\ref{prop:dt is an extended pseudometric}, \ref{prop:dt-zero-implies-homotopy equivalence}, \ref{prop:coarse upper bound}, \ref{prop:HC lowerbound}, and \ref{prop:omnipresence of dt}.  Throughout this section, $X$, $Y$, $Z$, and $W$ will denote nonempty finite sets.

\begin{proposition}\label{prop:dt is an extended pseudometric}
$\dt$ in Defn. \ref{def:generalized tripod distance} is an extended pseudometric.
\end{proposition}

\begin{proof}
Symmetry and non-negativity are clear. We prove the triangle inequality. Consider any three filtrations $\Ffunc_X:\posetflow\rightarrow \Simp(X)$, $\Gfunc_Y:\posetflow\rightarrow \Simp(Y
)$, and $\Hfunc_W:\posetflow\rightarrow \Simp(W)$. Assume that, for $\eta_1,\eta_2>0$, we have: 
\[\dt(\Ffunc_X,\Gfunc_Y)<\eta_1\mbox{ and }\dt(\Gfunc_Y,\Hfunc_W)<\eta_2.\]
Then, there exist tripods $\tripodd$ and $\tripoddd$ such that \[\dint^{\Simp(Z_1)}(\varphi_X^\ast\Ffunc_X,\varphi_Y^\ast\Gfunc_Y)<\eta_1\mbox{ and }\dint^{\Simp(Z_2)}(\psi_Y^\ast\Gfunc_Y,\psi_Y^\ast\Gfunc_Y)<\eta_2.
\]
 Consider the set $Z:=\left\{(z_1,z_2)\in Z_1\times Z_2:\varphi_Y(z_1)=\psi_Y(z_2)\right\}$ and let $\pi_1:Z\rightarrow Z_1$ and $\pi_2:Z\rightarrow Z_2$ be the canonical projections to the first and the second coordinate, respectively. We define the composite tripod $R_2\circ R_1$ as follows:
 \begin{equation}\label{eq:comosition}
 	R_2\circ R_1:X \xtwoheadleftarrow{\omega_X}\ Z \xtwoheadrightarrow{\omega_W}\ W,\ \mbox{where}\ \  \omega_X:=\varphi_X\circ\pi_1, \ \ \omega_W:=\psi_W\circ \pi_2.
 	\end{equation}	
 	\begin{center}	
 		\begin{tikzcd}		
 			\&\&Z\arrow[swap,two heads]{ld}{\pi_1}\arrow[two heads]{rd}{\pi_2}\\
 			\&Z_1\arrow[swap,two heads]{ld}{\varphi_X}\arrow[two heads]{rd}{\varphi_Y}\&\&Z_2\arrow[swap,two heads]{ld}{\psi_Y}\arrow[two heads]{rd}{\psi_W}
 			\\X\&\&Y\&\&W
 		\end{tikzcd}
 	\end{center}
Also, let $\omega_Y:=\varphi_Y\circ \pi_1=\psi_Y\circ \pi_2$. 	
By taking the composite tripod, we have:
\begin{align*}
   \dt(\Ffunc_X,\Hfunc_W)&\leq \dint^{\Simp(Z)}(\omega_X^\ast \Ffunc_X,\omega_Z^\ast \Hfunc_Z)\\ &\leq\dint^{\Simp(Z)}(\omega_X^\ast \Ffunc_X,\omega_Y^\ast \Gfunc_Y)+\dint^{\Simp(Z)}(\omega_Y^\ast \Gfunc_Y,\omega_Z^\ast \Hfunc_Z)\\&=\dint^{\Simp(Z_1)}(\varphi_X^\ast \Ffunc_X,\varphi_Y^\ast \Gfunc_Y)+\dint^{\Simp(Z_2)}(\psi_Y^\ast \Gfunc_Y,\psi_Z^\ast \Hfunc_Z)\\&<\eta_1+\eta_2.
\end{align*}
The desired inequality follows by letting $\eta_1\searrow \dt(\Ffunc_X,\Gfunc_Y)$ and $\eta_2\searrow \dt(\Gfunc_Y,\Hfunc_W)$.
\end{proof}

Next we show that $\dt=0$ implies a homotopy equivalence between filtrations.

\begin{proposition}
\label{prop:dt-zero-implies-homotopy equivalence}
Let $\Ffunc_X:\PPPP\to\Simp(X)$ and $\Ffunc_Y:\PPPP\to\Simp(Y)$ be any two  filtrations. Suppose that $\dt(\Ffunc_X,\Ffunc_Y)=0$.  Then $\Ffunc_X\simeq\Ffunc_Y$ (the converse does not hold; see Ex.~\ref{ex:filtrations}).\footnote{When $\PP=\R$, a more general statement can be found in \cite[Rmk.~6.8.9]{scoccola2020locally} in connection with the \emph{homotopy interleaving distance} \cite{blumberg2017universality}. }
\end{proposition}
This proposition can be proved in a similar way to \cite[Cor.~2.1]{memoli2017distance}.
\begin{proof}
Let $\tripod$ be a minimizer for $\dt$ (cf. the first footnote in  Defn.~\ref{def:general formigram distance}).
Then, $\dint(\varphi_X^*\Ffunc_X,\varphi_Y^*\Ffunc_Y)=0$.
By Defn.~\ref{def:generalized tripod distance}, we obtain
\[\max_{\sigma\in \pow(Z)} d_{\widehat{\Omega}}\left((\varphi_X^{\ast}\Ffunc_X)^{-1}(\sigma^\uparrow),(\varphi_Y^{\ast}\Gfunc_Y)^{-1}(\sigma^{\uparrow})\right)=0.
\]
This implies $\varphi_X^*\Ffunc_X=\varphi_Y^*\Ffunc_Y$.
By Quillen’s Theorem A \cite{quillen1973higher},
we have that $\Ffunc_X\simeq\varphi_X^*\Ffunc_X$ and $\varphi_Y^*\Ffunc_Y\simeq \Ffunc_Y$, completing the proof. 
\end{proof}

\paragraph{Upper bound for $\dt$.} We aim at obtaining a coarse upper bound for $\dt$ (Prop.~\ref{prop:coarse upper bound}).

\begin{lemma}[Weak join-preserving property of $\domhat$]\label{lem:union domhat} Let $\posetflow$ be a poset with a flow. Let $(A_j)$ be a family of upper sets in $\PP$ and let $B$ be another upper set in $\PP$. Then,
\begin{equation}\label{eq:union domhat}
    \domhat\left(\bigcup_j A_j,B\right)\leq \sup_{j\in J} \domhat\left(A_j,B\right)
\end{equation}
\end{lemma}
It is not difficult to find an example that shows the inequality in (\ref{eq:union domhat}) can be strict.

\begin{proof}
Let $\eps>0$ such that $\domhat\left(A_j,B\right)<\eps$ for all $j\in J$. Then for each $j$ we have $A_j\subset \omegahat_\eps(B)$ and thus $\bigcup_j A_j \subset \omegahat_\eps(B)$. Also, for every $j$, we have $B\subset \omegahat(A_j)$, which implies $B\subset \bigcup_j \omegahat(A_j)$. By Rmk.~\ref{rem:upper sets and omega hat properties} \ref{item:union and intersection preserved}, we have $B\subset \omegahat_\eps \left(\bigcup_j A_j\right)$. Therefore, the left-hand side of inequality (\ref{eq:union domhat}) is at most $\eps$.
\end{proof}

For the singleton set $\{\ast\}$, note that $\Simp(\{\ast\})$ includes only the two simplicial complexes; the empty complex and $\{\{\ast\}\}$.
Defn.~\ref{def:generalized tripod distance} directly implies:
\begin{lemma}\label{lem:comparison with one-point}
Given any $\Ffunc_X:\posetflow\rightarrow \Simp(X)$ and $\Hfunc_{\{\ast\}}:\posetflow\rightarrow \Simp(\{\ast\})$, we have 
\begin{align*}
\dt(\Ffunc_X,\Hfunc_{\{\ast\}}) &= \max_{\substack{\sigma\subset X\\\sigma\neq \emptyset}}\domhat\big(\Ffunc_X^{-1}(\sigma^\uparrow),\Hfunc^{-1}(\ast^\uparrow))\big).
\end{align*}

\end{lemma}

By invoking the triangle inequality of $\dt$ and the lemma above, we have:
\begin{proposition}[Coarse upper bound for $\dt$]\label{prop:coarse upper bound}
Given any filtrations $\Ffunc_X:\posetflow\rightarrow \Simp(X)$ and $\Gfunc_Y:\posetflow\rightarrow \Simp(Y)$, we have  
\[\dt(\Ffunc_X,\Gfunc_Y)\leq \min_{A\in U(\PP)} \left(\max_{\substack{\sigma\subset X\\\sigma\neq \emptyset}}\domhat\big(\Ffunc_X^{-1}(\sigma^\uparrow),A\big)+\max_{\substack{\tau\subset Y\\\tau\neq \emptyset}}\domhat\big(\Gfunc_Y^{-1}(\tau^{\uparrow}),A\big)\right).\]
\end{proposition}

\paragraph{Lower bound for $\dt$.}

Let $K\in \Simp(X)$ and let $K^{(0)}$ be the vertex set of $K$. For $x,x'\in K^{(0)}$, we write $x\sim_{\pi_0(K)} x'$ if there exists a sequence of 1-simplices in $K$ connecting $x$ and $x'$, i.e.~there exist $x_1,\ldots,x_n\in K^{(0)}$ with $\{x,x_1\}$, $\{x_1,x_2\}$, $\ldots$, $\{x_n,x'\}\in K$ (if the sequence is empty, then $x=x'$). This defines an equivalence relation on $K^{(0)}$ and thus $\pi_0(K):=K^{(0)}/\sim_{\pi_0(K)}\in \subpart(X)$.

Then note that $\pi_0$ serves as a \pe{} $\Simp(X)\rightarrow \subpart(X)$. 
Recall $\dgh$ in Defn.~\ref{def:dGH between HC over P}.

\begin{proposition}\label{prop:HC lowerbound} 
For any $\Ffunc_X:\PPPP\rightarrow \Simp(X)$ and $\Gfunc_Y:\PPPP\rightarrow \Simp(Y)$, 
\[\dgh(\pi_0\circ \Ffunc_X,\pi_0\circ \Gfunc_Y)\leq \dt(\Ffunc_X,\Gfunc_Y).\]
\end{proposition}
We remark that the LHS is a better lower bound for $\dt$ than $\dint(\Hrm_0(\Ffunc_X),\Hrm_0(\Gfunc_Y))$ in Thm.~\ref{thm:H_k-lower-bound}. For example, if $\Ffunc_X$ and $\Gfunc_Y$ are the Rips filtrations of metric spaces $(X,d_X)$ and $(Y,d_Y)$, then the inequality above coincides with the inequality in Eqn.~(\ref{eq:SHLH GH stability}). The LHS of Eqn.~(\ref{eq:SHLH GH stability}) is known to be a better lower bound than the interleaving distance between the zeroth homology of the Rips filtrations of $(X,d_X)$ and $(Y,d_Y)$.

\paragraph{Ubiquity of $\dt$.}  
Definitions in the following three items are relevant to the three statements in Prop.~\ref{prop:omnipresence of dt} respectively in order.  

\begin{enumerate}[label=(\roman*)]
    \item Given a pseudometric space $(X,d_X)$, let $\VR(X,d_X):\R\rightarrow \Simp(X)$ be the \textbf{Vietoris-Rips filtration} of $(X,d_X)$, i.e.~each $r\in \R$ is sent to: \[\VR(X,d_X)(r)=\{\sigma\subset X:\mbox{$\abs{\sigma}<\infty$ and $d_X(x,x')\leq r$ for all $x,x'\in X$}\}.\]
    \item Let us recall the simplexization map $s$ in Defn.~\ref{def:simplexization}. For any nonempty finite set $X$ and any $P\in \subpart(X)$, let $s(P):=\{s(B):B\in P\}\in \Simp(X)$. Then $s$ serves as a \pe{} $\Simp(X)\rightarrow \subpart(X)$.
    \item Let $\lambda>0$. The $\lambda$-flow on the product poset $\Int\times \R_{\geq 0}$ is defined by
\[\Omega^\lambda:=\{\Omega_\eps^{\lambda}:((a,b),r)\mapsto (a-\eps,b+\eps),r+\lambda\eps\}_{\eps \geq 0}.\]
\end{enumerate}

\begin{proposition}[Ubiquity
of $\dt^\mathrm{Simp}$]\label{prop:omnipresence of dt} Let $\dgh^\mathrm{Met}$, $\dgh^\mathrm{HC}$, and $\dgh^\mathrm{Simp}$ be the Gromov-Hausdorff distances between metric spaces, between hierarchical clusterings, and between simplicial filtrations, respectively (Defns.~\ref{def:Gromov-Hausdorff}, \ref{def:dGH between HC over P}, \ref{def:tripod distance}). We have:
\begin{enumerate}[label=(\roman*),topsep=0pt,itemsep=-1ex,partopsep=1ex,parsep=1ex]
    \item \cite[Prop.~2.8]{memoli2019quantitative} For any finite pseudometric spaces $(X,d_X)$ and $(Y,d_Y)$, \[\dgh^\mathrm{Met}((X,d_X),(Y,d_Y))=\dt^\mathrm{Simp}(\VR(X,d_X),\VR(Y,d_Y)).\] \label{item:dgh=dt}
    \item For any $(\PP,\leq)$-indexed hierarchical clusterings $\theta_X$ and $\theta_Y$ over $X$ and $Y$ respectively,  \[\dgh^\mathrm{HC}(\theta_X,\theta_Y)=\dt^\mathrm{Simp}(s\circ \theta_X,s\circ \theta_Y).\]
  \label{item:dgh=dt formigrams}
    \item Let $\lambda>0$. For any dynamic metric spaces $\gamma_X$ and $\gamma_Y$, \[\dgh^\lambda(\gamma_X,\gamma_Y)=\dt^\mathrm{Simp}(\VR^\lambda(\gamma_X),\VR^{\lambda}(\gamma_Y)),\]
    where the LHS is the $\lambda$-slack interleaving distance
 \cite[Defn.~2.10]{kim2020spatiotemporal}.\label{item:dgh=dt DMSs}
\end{enumerate}
\end{proposition}
Since the respective proofs of \ref{item:dgh=dt formigrams} and \ref{item:dgh=dt DMSs}  directly follow from the definitions of the involved metrics, we omit them .

\section{Specialization of Thm.~\ref{thm:df complexity}}\label{sec:specialization}
We specialize Thm.~\ref{thm:df complexity} by restricting ourselves to formigrams whose underlying Reeb graphs do not contain any loops; Thm.~\ref{thm:df complexity for trees}. This theorem is a rather direct consequence of Thm.~\ref{thm:dH=dB} and Cor.~\ref{cor:dB complexity matches with dH complexity} below.

Recall the Hausdorff and bottleneck distances in Sec.~\ref{sec:dH}. Recall that, by $\langle a,b \rangle$ for $a<b$ in $\R$, we denote one of the real intervals $(a,b)$, $(a,b]$, $[a,b)$, and $[a,b]$. Given any real intervals $\langle a,b \rangle$ and $\langle c,d \rangle$, we write  $\langle a,b \rangle < \langle c,d \rangle$ if $b<c$.

\begin{theorem}\label{thm:dH=dB}

Let $\mathcal{A}:=\left\{I_i:=\langle a_i,b_i \rangle:  1\leq i \leq m\right\}$ and $\mathcal{B}:=\left\{J_j:=\langle c_j, d_j \rangle: 1\leq j \leq n\right\}$. Assume that $I_i<I_{i+1}$ for $i=1,\ldots,m-1$  and $J_j<J_{j+1}$ for $j=1,\ldots,n-1$. We have:
\[
\dhaus\big(C(\mathcal{A}) ,C(\mathcal{B})\big)= \db(\mathcal{A}, \mathcal{B}),
\]
where $C(\Acal):=\R\setminus \bigcup \Acal$.
\end{theorem}

We prove this theorem at the end of the section. By Thm~\ref{thm:dH=dB} and \cite[Thm.~3.1]{kerber2017geometry}, we obtain:

\begin{corollary}\label{cor:dB complexity matches with dH complexity}
Let $\ell:=\max(m,n)$. Computing $\dhaus\big(C(\mathcal{A}) ,C(\mathcal{B})\big)$ takes $O(\ell^{1.5}\log \ell)$.
\end{corollary}

\begin{theorem}\label{thm:df complexity for trees} 
Assume that the underlying Reeb graphs of $\theta,\theta'\in \Formi(X)$ do not contain any loops. Then, computing $\df(\theta,\theta')$ at worst requires time $O(n^2\ell^{1.5}\log \ell)$,  where $n:=\abs{X}$ and \[\ell:=\max\left(\abs{\crit(\theta_X)},\vert\crit(\theta')\vert\right).\]
\end{theorem}

\begin{proof}
Fix $x,x'\in X$ and let \[\theta_{\{x,x'\}}:=\left\{t\in \R: \mbox{$x$ and $x'$ belong to the same block in $\theta(t)$} \right\}.\] We claim that $\dhaus^{\Int}\left(\theta^{-1}(\{x,x'\}^{\uparrow}),\theta'^{-1}(\{x,x'\}^{\uparrow})\right)$ in Eqn.~(\ref{eq:structure thm}) is equal to $\dhaus^{\R}\left(\theta_{\{x,x'\}},\theta'_{\{x,x'\}}\right)$. To show this, it suffices to show that for any $\eta>0$ one distance is upper bounded by $\eta$ implies the other is too. Indeed, the no-loop assumption implies that, for any $\eta>0$, the both distances are upper bounded by $\eta$ if and only if the following holds; if $x$ and $x'$ belong to $\theta(t)$ (resp. $\theta'(t)$), then there exists $s\in [t-\eps,t+\eps]$ such that $x$ and $x'$ belong to $\theta(s)$ (resp. $\theta'(s)$).

By Cor.~\ref{cor:dB complexity matches with dH complexity}, computing $\dhaus^{\R}\left(\theta_{\{x,x'\}},\theta'_{\{x,x'\}}\right)$ requires $O(\ell^{1.5}\log \ell)$. Since there are $n^2$ singleton and doubleton subsets of $X$, by the equality in Eqn.~(\ref{eq:structure thm}), we compute $\df(\theta,\theta')$ in $O(n^2\cdot \ell^{1.5}\log \ell)$. 
\end{proof}

\paragraph{Proof of Thm.~\ref{thm:dH=dB}.} We prove Thm.~\ref{thm:dH=dB}. To avoid trivialities, assume that $\mathcal{A}\neq \mathcal{B}$. Let $E_{\Acal}$ be the collection of $I_i$'s endpoints, i.e. $\Acal=\{a_i\}_{i=1}^m\cup \{b_i\}_{i=1}^m$. Letting $b_0=-\infty$ and $a_{m+1}=\infty$, we have $C(\mathcal{A})=\mathbb{R}\setminus \bigcup \mathcal{A}=\bigsqcup_{i=0}^m \langle b_i,a_{i+1}\rangle$.  Similarly, we define $E_\Bcal$ and $C(\Bcal)$.

\begin{lemma}\label{lem:for dH leq dB} $\displaystyle \dhaus(C(\mathcal{A}), C(\mathcal{B}))=\max\left\{\max_{a\in C(\mathcal{A})\cap (\bigcup \mathcal{B})}\min_{b\in E_\mathcal{B}} \abs{a-b}, \max_{b\in C(\mathcal{B})\cap (\bigcup \mathcal{A})}\min_{a\in E_\mathcal{A}}\abs{a-b}\right\}$.
\begin{proof} Since
$$\dhaus(C(\mathcal{A}), C(\mathcal{B}))=\max\left\{\max_{a\in C(\mathcal{A})}\min_{b\in C(\mathcal{B})}\abs{a-b}, \max_{b\in C(\mathcal{B})}\min_{a\in C(\mathcal{A})}\abs{a-b}\right\},$$
by symmetry, it suffices to prove that
\begin{equation}\label{max}
\max_{a\in C(\mathcal{A})}\min_{b\in C(\mathcal{B})}\abs{a-b}=\max_{a\in C(\mathcal{A})\cap (\bigcup \mathcal{B})}\min_{b\in E_\mathcal{B}} \abs{a-b}.
\end{equation}
 For $a\in C(\mathcal{A})$, let $\displaystyle \varphi(a):=\min_{b\in C(\mathcal{B})}\abs{a-b}$. If $a\in C(\mathcal{B})$, then clearly $\varphi(a)=0$. Hence, restricting the domain $C(\mathcal{A})$ of $\varphi$ to the intersection of $C(\mathcal{A})$ and $\mathbb{R}\setminus C(\mathcal{B})=\bigcup \mathcal{B}$ does not affect the maximum of $\varphi$, which implies:
\begin{equation}\label{max1}
\max_{a\in C(\mathcal{A})}\min_{b\in C(\mathcal{B})}\abs{a-b}=\max_{a\in C(\mathcal{A})\cap (\bigcup\mathcal{B})}\min_{b\in C(\mathcal{B})}\abs{a-b}.\footnote{Since we assumed $\mathcal{A}\neq \mathcal{B}$, the set $C(\mathcal{A})\cap (\bigcup\mathcal{B})$ cannot be empty.}
\end{equation}
Next, fix an arbitrary $a\in \bigcup \mathcal{B}$. Then the closest point in $\mathbb{R}\setminus \bigcup \mathcal{B}=C(\mathcal{B})$ to $a$ is obviously located on the boundary of $C(\mathcal{B})$, the set of endpoints $E_\mathcal{B}$, which implies 
$\min_{b\in C(\mathcal{B})}\abs{a-b}=\min_{b\in E_{\mathcal{B}}}\abs{a-b}$.
Therefore, the RHS of (\ref{max}) coincides with the RHS of (\ref{max1}).
\end{proof}
\end{lemma}

\emph{Proof of $\dhaus\big(C(\mathcal{A}) ,C(\mathcal{B})\big)\leq \db(\mathcal{A}, \mathcal{B})$.}

Let $\sigma:\mathcal{A} \nrightarrow \mathcal{B}$
be a $\delta$-matching. By Lem.~\ref{lem:for dH leq dB} and symmetry, it suffices to prove that $$\max_{a\in C(\mathcal{A})\cap (\bigcup \mathcal{B})}\min_{b\in E_\mathcal{B}} \abs{a-b}\leq \delta.$$
Fix any $a\in C(\mathcal{A})\cap (\bigcup \mathcal{B})$. Then there are $0\leq i\leq m$ and $1\leq j \leq n$ such that 
\begin{equation}\label{intersection}
a\in \langle b_i, a_{i+1}\rangle\cap \langle c_j, d_j \rangle. 
\end{equation}
\textbf{Case 1.} Assume that length$\langle c_j,d_j \rangle\leq 2\delta$. Since $a\in \langle c_j, d_j \rangle$, we have: 
\begin{align*}
\min_{b\in E_{\mathcal{B}}}\abs{a-b}&\leq \min \{\abs{a-c_j}, \abs{a-d_j}\}\leq \delta.
\end{align*}
\textbf{Case 2.} Assume that length$\langle c_j,d_j \rangle>2\delta$. Then there exists $1\leq k\leq m$ such that $\langle a_k, b_k \rangle$ is matched with $\langle c_j,d_j \rangle$ via the matching $\sigma$. Note that the intersection in (\ref{intersection}) can possibly be expressed as follows:
$$\langle b_i, a_{i+1}\rangle\cap \langle c_j, d_j \rangle=\begin{cases} \langle c_j, d_j \rangle& \mbox{Case (a)} \\ \langle b_i, a_{i+1}\rangle& \mbox{Case (b)}\\ \langle b_i, d_j\rangle & \mbox{Case (c)} \\ \langle c_j, a_{i+1}\rangle& \mbox{Case (d)} \end{cases} $$

Assume Case (a), i.e. $J_j=\langle c_j, d_j \rangle\subset \langle b_i,a_{i+1}\rangle$ (See Fig.~\ref{fig:intervals}). Given any intervals $\langle a,b \rangle$ and $\langle c,d \rangle$, let $\norm{\langle a,b \rangle-\langle c,d \rangle}_{\infty}:=\max\{\abs{a-c},\abs{b-d}\}$. Note that the closest intervals to $\langle c_j, d_j \rangle$ in $\mathcal{A}$ in the metric  $\norm{\cdot}_\infty$ are $I_i=\langle a_i, b_i\rangle$ and $I_{i+1}=\langle a_{i+1}, b_{i+1}\rangle$. However, both $\norm{I_i-J_j}_\infty$ and $\norm{I_{i+1}- J_j}_\infty$ are greater than $\delta$ because $2\delta \leq|d_j-c_j|\leq \abs{b_i-d_j} \leq \norm{I_i-J_j}_\infty$ and $2\delta \leq|d_j-c_j|= \abs{a_{i+1}-c_j} \leq \norm{I_{i+1}- J_j}_\infty$. This contradicts the fact that $\sigma$ is a $\delta$-matching. Therefore, Case (a) cannot happen. 

\begin{figure}
\begin{tikzpicture}
\centering
\draw (-6,0) node {};

\draw plot (-3,1.5)--(-0.5,1.5);
\draw plot (6,1.5)--(8,1.5);
\draw plot (0,0.5)--(5,0.5);

\draw (6,1.5) node [anchor=south] {$a_{i+1}$};
\draw (8,1.5) node [anchor=south] {$b_{i+1}$};

\draw (-3,1.5) node [anchor=south] {$a_{i}$};

\draw (-0.5,1.5) node [anchor=south] {$b_{i}$};
\draw (0,0.5) node [anchor=south] {$c_{j}$};
\draw (5,0.5) node [anchor=south] {$d_{j}$};

\end{tikzpicture}
\caption{An illustration for Case 2, (a).\label{fig:intervals}}
\end{figure} \par

Assume Case (b). Also, assume that $J_j=\langle c_j, d_j \rangle$ is matched with $I_k$ for $k\leq i$ via $\sigma$. Then since $b_k\leq b_i\leq a \leq d_j$, $$\abs{a-d_j}\leq \abs{b_k-d_j}\leq \norm{I_k-J_j}_{\infty}\leq \delta.$$ Now, suppose that  $\langle c_j, d_j \rangle$ is matched with $I_k$ for $k> i$. Then since $c_j\leq a \leq a_{i+1} \leq a_k$, $$\abs{a-c_j}\leq \abs{a_k-c_j}\leq \norm{I_k-J_j}_{\infty}\leq \delta.$$ Therefore, we have $$\displaystyle \min_{b\in E_{\mathcal{B}}}\abs{a-b}\leq \min\{\abs{a-c_j}, \abs{a-d_j}\} \leq \delta$$ as desired. \par

Assume Case (c), i.e., $c_j \leq b_i \leq d_j \leq a_{i+1}$. Note that $I_k$ cannot be matched with $J_j$ for $k>i$ via $\sigma$ because $c_j<d_j\leq a_{i+1}$ and in turn $$\delta< 2\delta < d_j-c_j \leq a_{i+1}-c_j \leq a_k-c_j \leq \norm{I_k-J_j}_{\infty}.$$ Hence, $J_j$ must be matched with $I_k$ for some $k\leq i$. Take $k\leq i$ such that $I_k$ is matched with $J_j$ via $\sigma$. Since $b_k\leq b_i\leq a < d_j$, we have 

$$\abs{a-d_j}\leq \abs{d_j-b_k} \leq \norm{J_j-I_k}_{\infty}\leq \delta.$$
Therefore, we have $$\displaystyle \min_{b\in E_{\mathcal{B}}}\abs{a-b}\leq \abs{a-d_j} \leq \delta.$$\par
Assume Case (d). By a similar argument to Case (c), $J_j$ must be matched with $I_k$ for some $k>i$ and this in turn implies $\abs{a-c_j}\leq\delta$. Hence again $$\displaystyle \min_{b\in E_{\mathcal{B}}}\abs{a-b}\leq \abs{a-c_j} \leq \delta.$$  We have shown that $\displaystyle \min_{b\in E_{\mathcal{B}}}\abs{a-b}\leq \delta$ for all $a\in C(\mathcal{A})\cap (\bigcup \mathcal{B})$ as desired.

\paragraph{Proof of $\dhaus\big(C(\mathcal{A}) ,C(\mathcal{B})\big)\geq \db(\mathcal{A}, \mathcal{B})$.} Let $\eps>0$. Define $\mathcal{A}^{\eps}$ to be the collection of intervals in $\mathcal{A}$ whose length is at least $\eps$. Also, given any interval $I=\langle a,b \rangle$, let 
$$I^{-\eps}:=\begin{cases} \emptyset & \mbox{if\ $b-a \leq 2\eps$}\\ \langle a+\eps, b-\eps \rangle & \mbox{otherwise}. \end{cases}$$

Let $\big(C(\mathcal{A})\big)^\eta$ be the $\eta$-thickening of $C(\mathcal{A})$, i.e. $\left\{r\in \R: \exists p\in C(\mathcal{A}), \ \abs{p-r}\leq \eta\right\}$. We have:

\begin{lemma} $\displaystyle \mathbb{R}\setminus \big(C(\mathcal{A})\big)^\eta=\bigcup_{I\in \Acal^{2\eta}} I^{-\eta}$ (the proof is elementary but rather tedious so we omit it).

\end{lemma}
Suppose that $\dhaus\big(C(\mathcal{A}) ,C(\mathcal{B})\big)\leq \eta$ for some $\eta>0$. We wish to construct an $\eta$-matching $\sigma:\mathcal{A}\nrightarrow \mathcal{B}$. Note that $C(\mathcal{B})\subset \big(C(\mathcal{A})\big)^\eta$ by assumption and thus $\bigcup_{j=1}^nJ_j=\mathbb{R}\setminus C(\mathcal{B})\supset \mathbb{R}\setminus \big(C(\mathcal{A})\big)^\eta=\bigcup_{I_i\in \mathcal{A}^{2\eta}} I_i^{-\eta}$. This implies that there exists $j$ such that $I_i^{-\eta}\subset J_j$, equivalently $I_i\subset J_j^\eta$, for each $I_i\in \mathcal{A}^{2\eta}$ since the union $\bigcup_{j=1}^nJ_j$ is disjoint. We already have shown the following proposition.
\begin{proposition}
Assume that $\eta\geq \dhaus\big(C(\mathcal{A}),C(\mathcal{B})\big)$ for some $\eta>0$. Then, there exist functions $f:A^{2\eta}\rightarrow B$ and $g:B^{2\eta}\rightarrow A$ such that 
$$I_i \subseteq (J_{f(i)})^\eta\,\,\mbox{for all $i\in A^{2\eta}$ and } J_j \subseteq (I_{g(j)})^\eta\,\,\mbox{for all $j\in B^{2\eta}$}$$ where $A^{2\eta}=\left\{1\leq i \leq m: I_i\in \mathcal{A}^{2\eta}\right\}$ and $B^{2\eta}=\left\{1\leq j \leq n: J_j \in \mathcal{B}^{2\eta}\right\}$. 
\end{proposition}

Let $f,g$ be as in the proposition. We construct an $\eta$-matching between $\mathcal{A}$ and $\mathcal{B}$. We write $A^{2\eta}=A^{2\eta}_0\sqcup A^{2\eta}_\ast$ where
$A^{2\eta}_0:=\left\{i\in A^{2\eta}: f(i)\notin B^{2\eta}\right\}$ and $A^{2\eta}_{*}:=\left\{i\in A^{2\eta}:f(i)\in B^{2\eta}\right\}$. Similarly, we write $B^{2\eta}=B^{2\eta}_0\sqcup B^{2\eta}_\ast$ using the function $g$.

\begin{proposition}\label{one-to-one} $g\circ f|_{A^{2\eta}_{*}}=\mathrm{id}_{A^{2\eta}_{*}}$  and $f\circ g|_{B^{2\eta}_{*}}=\mathrm{id}_{B^{2\eta}_{*}}$.
\end{proposition}
\begin{proof}
We only show the first equality. Take any $i \in A^{2\eta}_\ast$. We know that 
\begin{equation*}
I_i \subseteq \big(J_{f(i)}\big)^\eta\subseteq \bigg(\big(I_{g(f(i))}\big)^\eta\bigg)^\eta = \big(I_{g(f(i))}\big)^{2\eta}.
\end{equation*}
Let $j=g(f(i))$. The above equation means that $\langle a_i,b_i \rangle\subseteq \langle a_j-2\eta,b_j+2\eta \rangle$. However, since $\mathrm{length}\langle a_i,b_i \rangle\geq 2\eta$, this is impossible unless either $\langle a_i,b_i \rangle$ and $\langle a_j,b_j \rangle$ share one of their endpoints or have nonempty intersection. Since the intervals in $\Acal$ are disjoint and do not share their endpoints, we have $i=j$.
\end{proof}

Notice two important implications of the above claim: The first is that $f(A_*^{2\eta})\subseteq B_*^{2\eta}$ and $g(B_*^{2\eta})\subseteq A_*^{2\eta}$. The second is that both $f|_{A^{2\eta}_{*}}$ and $g|_{B^{2\eta}_{*}}$ are injective. Now we are going to show that $f$ and $g$ are injective on $A^{2\eta}_{0}$ and $B^{2\eta}_{0}$ respectively as well.

\begin{claim}\label{injective}
The functions $f|_{A^{2\eta}_0}$ and $g|_{B^{2\eta}_0}$ are injective.
\end{claim}
\begin{proof}
We prove the claim for $f$. Assume that $i,j \in A^{2\eta}_0$, and $f(i) = f(j)=k$, which means $(J_{k})^\eta\supseteq I_i$ and $(J_{k})^\eta\supseteq I_{j}$ and hence $(J_{k})^\eta\supseteq I_i \cup I_{j}.$ Therefore,
\begin{align*}
4\eta&\geq 2\eta+\length{J_{k}}&\because i,j \in A_0^{2\eta}
\\&=\length{(J_{k})^\eta}
\\&\geq \length{I_{i}\cup I_{j}}
\end{align*}
This enforces $I_i$ and $I_j$ to have nonempty intersection since each of them has the length$\geq 2\eta$. Thus $i=j$ because the intervals in $\mathcal{A}$ are disjoint. \end{proof}
We are now ready to define an $\eta$-matching $\sigma:\mathcal{A}\nrightarrow \mathcal{B}$. For the sake of simplicity, we would regard $\sigma$ as a matching between index sets $A$ and $B$ of $\mathcal{A}$ and $\mathcal{B}$ respectively by identifying elements in $\mathcal{A}$ and $\mathcal{B}$ to their indexes. 
First, we define 
$$\mathrm{coim}(\sigma) = A^{2\eta}_0 \,\sqcup\, A^{2\eta}_\ast\,\sqcup \,g(B^{2\eta}_0),\ \ \  \mbox{and}\ \ \  \mathrm{im}(\sigma) = f(A^{2\eta}_0) \, \sqcup \, B^{2\eta}_\ast \,\sqcup \, B^{2\eta}_0.$$
Then, 
$\mathrm{coim}(\sigma)\supseteq A^{2\eta}$ and $\mathrm{im}(\sigma)\supseteq B^{2\eta}$. Now, define $\sigma: A \nrightarrow B$ as follows:

$$\sigma(i)=\begin{cases} f(i)& \mbox{if $i\in A^{2\eta}=A^{2\eta}_*\sqcup A^{2\eta}_0$}\\ g^{-1}(i)& \mbox{if $i \in g(B_{0}^{2\eta})$}.
\end{cases}$$
By Claim \ref{injective}, $\sigma$ is well-defined . The following diagram depicts the construction of the matching:
$$
\xymatrix{
A^{2\eta}_0 \ar[d]_f & \sqcup &  A^{2\eta}_\ast\ar@{<->}[d]_{f}^{g} & \sqcup & g(B^{2\eta}_0)\\
f(A^{2\eta}_0) & \sqcup & B^{2\eta}_\ast & \sqcup & B^{2\eta}_0\ar[u]_g
}
$$

It remains to show that  $\norm{I_i-J_{\sigma(i)}}_{\infty}\leq \eta$, i.e., $I_i \subseteq (J_{\sigma(i)})^{\eta}$ and $J_{\sigma(i)}\subseteq (I_i)^{\eta}$ for all $i\in \mathrm{coim}(\sigma)$. Recall that $I_i=\langle a_i,b_i \rangle$ and $J_j=\langle c_j, d_j \rangle$ for $1\leq i \leq m$ and $1\leq j \leq n$. \\

\noindent\textbf{Case 1.} Pick $i\in A^{2\eta}_0$ and let $\sigma(i)=f(i)=j$ so that $\length{I_i}\geq 2\eta$ whereas $\length{J_{j}}< 2\eta$. We wish to verify that $I_i \subseteq (J_j)^{\eta}$ and $J_j\subseteq (I_i)^{\eta}$. But, the first inclusion follows automatically from the definition of $f$ and this implies that (1) $a_i\geq c_j-\eta$ and (2) $b_i\leq d_j+\eta$. So we are going to prove $J_j\subseteq (I_i)^{\eta}$ only, which amounts to show that (3) $c_j\geq a_i-\eta$ and (4) $d_j\leq b_i+\eta$. Suppose that (3) is false, i.e., $c_j<a_i-\eta$. Then we have
\begin{align*}
d_j&=c_j+\length{J_j}
\\&< c_j+2\eta
\\&<a_i+\eta && \because c_j<a_i-\eta 
\\&\leq b_i-\eta && \because a_i=b_i-\length{I_i}\leq b_i-2\eta.
\end{align*}
This contradicts (2) and thus (3) must hold. Similarly, the negation of (4) deduce the contradiction to the inequality (1) and thus both (3) and (4) should hold as desired. This strategy  works for the case of $i\in g(B^{2\eta}_0)$ as well since $g$ has the same property as $f$.\\

\noindent\textbf{Case 2.} Pick $i\in A_*^{2\eta}$ and let $\sigma(i)=f(i)=j$. Again by the definition of $f$, we know $I_i\subset (J_j)^\eta$. Further, $J_j\subseteq (I_{g(j)})^\eta$ by the definition of $g$ but recalling $g(j)=g(f(i))=i$ by Claim \ref{one-to-one}, we have  $\norm{I_i-J_{\sigma(i)}}_{\infty}\leq \eta$. 

Assuming $\eta\geq \dhaus\big(C(\mathcal{A}),C(\mathcal{B})\big)$ for some $\eta>0$, we have constructed $\eta$-matching between $\mathcal{A}$ and $\mathcal{B}$.  Therefore, we have inequality $\dhaus\big(C(\mathcal{A}),C(\mathcal{B})\big)\geq \db(\mathcal{A}, \mathcal{B})$ as desired.

\bibliographystyle{plain}  
\bibliography{bibliography}

\end{document}